\setlist{topsep=0mm,partopsep=0mm,itemsep=1mm}
\theoremstyle{plain}
\newtheorem{propn}{Proposition}[section]
\newtheorem{thm}[propn]{Theorem}
\newtheorem{lemma}[propn]{Lemma}
\newtheorem{cor}[propn]{Corollary}
\theoremstyle{definition}
\newtheorem{exam}[propn]{Example}
\theoremstyle{definition}
\newtheorem{que}[propn]{Question}
\theoremstyle{definition}
\newtheorem{defn}[propn]{Definition}
\theoremstyle{remark}
\newcommand{\N}{\mathbb{N}}
\newcommand{\R}{\mathcal{R}}
\renewcommand{\L}{\mathcal{L}}
\renewcommand{\H}{\mathcal{H}}
\newcommand{\J}{\mathcal{J}}
\newcommand{\K}{\mathcal{K}}
\newcommand{\epsa}{\varepsilon_{A}}
\newcommand{\epsb}{\varepsilon_{B}}
\newcommand{\epsc}{\varepsilon_{C}}
\newcommand{\fibpro}{\Pi(\varphi,\psi)}
\tikzset{
->, % makes the edges directed
>=stealth', % makes the arrow heads bold
node distance=3cm, % specifies the minimum distance between two nodes. Change if necessary.
every state/.style={thick, fill=gray!10,circle, draw, minimum size=1.2cm}, % sets the properties for each ’state’ node
initial text=$ $, % sets the text that appears on the start arrow
}
\begin{document}

\title[Finitary properties for fiber products]{On finitary properties for fiber products of free semigroups and free monoids}
\author[A. Clayton]{Ashley Clayton}
\address{School of Mathematics and Statistics, University of St Andrews, St Andrews, Scotland, UK}
\email{$\{$ac323$\}$@st-andrews.ac.uk}

\keywords{Subdirect product, fiber product, semigroup, free semigroup, free monoid}
\subjclass[2010]{Primary: 20MO5, Secondary: 08B26}

\begin{abstract}
We consider necessary and sufficient conditions for finite generation and finite presentability for fiber products of free semigroups and free monoids. We give a necessary and sufficient condition on finite fiber quotients for a fiber
product of two free monoids to be finitely generated, and show that all such fiber products are also finitely presented.
By way of contrast, we show that fiber products of free semigroups over finite fiber quotients are never finitely generated.  We then consider fiber products of free semigroups over infinite semigroups, and show that for such a fiber product to be finitely generated, the quotient must be infinite but finitely generated, idempotent-free, and $\mathcal{J}$-trivial. Finally, we construct automata accepting the indecomposable elements of the fiber product of two free monoids/semigroups over free monoid/semigroup fibers, and give a necessary and sufficient condition for such a product to be finitely generated.
\end{abstract}

\maketitle

\section{Introduction}
A \textit{subdirect product} of two algebras $A$ and $B$ is a subalgebra of the direct product, for which the natural projections onto $A$ and $B$ are surjective. In particular, the direct product of two algebras is a subdirect product, for which finitary properties have been well studied for groups. Most results here indicate that direct products of groups have a well behaved structure based on their constituent factors. That is, two groups $G$ and $H$ have the following properties (amongst others) if and only if $G \times  H$ also does: finitely generated; finitely presented; residually finite; nilpotent; solvable; and having decidable word problem. \\

By way of contrast, subdirect products of groups have more complicated behaviour in general, which has been particularly well exhibited for subdirect products of free groups. There are examples (stemming from \cite[Theorem 1]{baumslaugandroseblade}) which are not finitely generated \cite[Example 3]{bridsonmiller}; finitely generated without being finitely presented \cite{Grunewald78}; and finitely generated but with undecidable membership problem \cite{Mihailova66}. Describing their substructure complexity, any two non-abelian free groups $G$ and $H$ have uncountably many pairwise non-isomorphic subdirect products of $G$ and $H$ \cite[Corollary B]{bridsonmiller}.\\

By a result due to Goursat \cite{goursat}, subdirect products of groups arise as fiber products and vice versa, and are hence constructible in some sense. Comparatively by a more recent result due to Fleischer \cite{fleisch}, this is also true more generally for varieties of algebras which are congruence permutable (that is, all congruences commute with each other under composition) and equivalently varieties whose languages contain Mal'cev terms, which include the varieties of groups, rings and Lie algebras. In varieties whose algebras do not contain Mal'cev terms however, fiber products are subdirect, but not conversely. It is natural to investigate the boundary between those subdirect products which are constructible via fiber products, and those which are not in such varieties. \\

Further, the setting of the subdirect product structure in Universal Algebra owes itself to many natural questions relating to generation, presentation, and decidability for given varieties. The varieties of semigroups, monoids and lattices are not congruence permutable, and recent results indicate as for groups that subdirect products of the free objects in these varieties are already interesting. For example for the free monogenic semigroup viewed as $\N$, there are uncountably many pairwise non-isomorphic subdirect products of $\N^{k}$ for any $k \geq 2$ \cite{acnr}. For questions of finite generation, Mayr and Ru\v{s}kuc \cite{pmnr} have given some examples of the complications arising for free monoids: there exist fiber products of two free monogenic monoids over a finite fiber quotient which are not finitely generated \cite[Example 7.1]{pmnr}, and projection onto several factors is not sufficient for finite generation of subdirect products of more than two monoids \cite[Example 7.3]{pmnr}. Following this, they ask the below question:

\begin{que}[\cite{pmnr}, Problem 7.2]\thlabel{questionfromnik} Find necessary and sufficient conditions for a fiber product of
finitely generated monoids over a finite monoid to be finitely generated. More
specifically, is it decidable whether a fiber product of two finitely generated free
monoids over a finite quotient is finitely generated?
\end{que}

Following this preceeding work, in this paper we undertake an investigation into finite generation and presentation for fiber products of free semigroups and monoids. In Section \ref{prelims}, we introduce the necessary preliminary materials concerning subdirect products, fiber products, and formal language and automata theory used. In Section \ref{sec2}, we consider fiber products of free semigroups and monoids over finite fiber quotients, answering \thref{questionfromnik} in this case. In particular, we give the following results:
\begin{itemize}
\item there are no finitely generated fiber products of two free semigroups over a finite fiber quotient (\thref{FreeSemiFin});
\item it is decidable whether a fiber product of two finitely generated free monoids over a finite fiber quotient is finitely generated, and give necessary and sufficient conditions on the fiber quotient (\thref{FreeMonIff});
\item if a fiber product of two finitely generated free monoids over a finite fiber quotient is finitely generated, then it is also finitely presented (\thref{fingenifffinpres}), and give a presentation in this case.
\end{itemize}

In Section \ref{sec3}, we consider necessary conditions for finite generation for fiber products of free semigroups and monoids over infinite fiber quotients. In particular, we show the following:
\begin{itemize}
\item finitely generated fiber products of free semigroups have finitely generated, \linebreak $\mathcal{J}$-trivial, idempotent-free fiber quotients (\thref{fgcompsandfiberlemma}, \thref{torsionfree}, \thref{jtrivfiber});
\item a fiber product of free semigroups with fiber quotient $\N$ is finitely generated if and only if at least one of the epimorphisms is a constant map (\thref{Ntheorem});
\item fiber products of free semigroups over non-monogenic free commutative semigroup fiber quotients are not finitely generated (\thref{freecommsgp}).
\end{itemize}

In Section \ref{decsec}, we consider decision problems on fiber products of free semigroups and monoids with free fiber quotients, showing the following:
\begin{itemize}
\item the generalised word problem for a fiber product of semigroups in the direct product is decidable if and only if the word problem of the fiber quotient is decidable;
\item given a fiber product of two free semigroups (monoids) over a free semigroup (monoid) fiber quotient, one can ask whether or not it is finitely generated. This finite generation problem is decidable, for which we construct suitable finite state automata (\thref{fgnocyclesaut}, \thref{A'nocyclesfg}).
\end{itemize}

In Section \ref{nonfibsec}, we make some remarks on the number of finitely generated subdirect products of two free semigroups $A^{+}$ and $B^{+}$, which are generated by some subset of $A \times B$. In particular, we count the number of such subdirect products (\thref{numberoffgsubs}) as well as the number of fiber products (\thref{numberoffgfibs}), and make some remarks on their sparsity in $A^{+} \times B^{+}$.

Finally, we conclude in Section \ref{quesec} with some arising open questions.

\section{Preliminaries}\label{prelims}
Throughout, a \textit{subdirect product of semigroups (monoids)} $S$ \text{and} $T$ is a subsemigroup (submonoid) $U$ of $S\times T$ such that the projection maps
\[
\pi_{S} : U \rightarrow S, \,
\pi_{T} : U \rightarrow T
\]
are surjections. In this case, we write $U \leq_{\textup{sd}} S\times T$. This definition naturally extends to a \textit{subdirect product of a family of semigroups (monoids)} $\{S_{i}\}_{i \in I}$, being a subsemigroup (submonoid) $U$ of the direct product $\prod_{i \in I} S_{i}$ for which each of the projection maps $\pi_{i} : U \rightarrow S_{i}$ are surjections. For this paper however, we only consider finite families.

If $\varphi: S \rightarrow F$, $\psi : T \rightarrow F$ are two epimorphisms onto a common semigroup (monoid) $F$, then the \textit{fiber product of} $S$ \textit{and} $T$ \textit{with respect to} $\varphi, \psi$ is the subdirect product of $S\times T$ given by the set
\[
\{ (s,t) \in S\times T : \varphi(s) = \psi(t) \},
\]
with multiplication inherited from $S\times T$. We will write $\fibpro$ to denote the fiber product. The semigroup $F$ is called the \textit{fiber quotient} of the fiber product. Similarly to subdirect products, fiber products can be defined on families of semigroups and monoids as well. Fiber products are indeed subdirect products of $S$ and $T$, but not all subdirect products can be obtained in this way. The following result classifies when the two notions coincide:

\begin{lemma}[cf. Fleischer's Lemma \cite{fleisch}, Lemma 10.1] \thlabel{FleischerLemma} Let $S$, $T$ be semigroups, let $U \leq_{\textup{sd}} S \times T$. Define
\[\pi_{S} : U \rightarrow S := (s,t) \mapsto s,\]
\[\pi_{T} : U \rightarrow T := (s,t) \mapsto t.\]
 Then $U$ is a fiber product if and only if the kernel congruences of the above projection maps commute under composition, that is
\[\textup{ker}\,\pi_{S} \circ \textup{ker}\,\pi_{T} = \textup{ker}\,\pi_{T} \circ \textup{ker}\,\pi_{S}.\] \end{lemma}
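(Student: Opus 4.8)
The plan is to unwind both the kernel congruences and the definition of a fiber product into explicit coordinate conditions, and then to recognise the commutativity of the kernels as exactly the ``rectangle-closure'' property characterising fiber products. Writing $\alpha := \textup{ker}\,\pi_S$ and $\beta := \textup{ker}\,\pi_T$, and using the convention $\alpha\circ\beta = \{(x,z) : \exists\, y,\ (x,y)\in\alpha,\ (y,z)\in\beta\}$, I would first observe that for $u_1 = (s_1,t_1),\, u_2=(s_2,t_2)\in U$ one has $(u_1,u_2)\in\alpha$ iff $s_1=s_2$, and $(u_1,u_2)\in\beta$ iff $t_1=t_2$. Chasing the middle element through these two conditions then shows that $(u_1,u_2)\in\alpha\circ\beta$ iff $(s_1,t_2)\in U$, while $(u_1,u_2)\in\beta\circ\alpha$ iff $(s_2,t_1)\in U$. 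Hence $\alpha\circ\beta=\beta\circ\alpha$ is equivalent to the statement $(\ast)$: for all $(s_1,t_1),(s_2,t_2)\in U$, $(s_1,t_2)\in U$ if and only if $(s_2,t_1)\in U$.

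For the forward implication I would assume $U=\fibpro$ and verify $(\ast)$ directly: given $(s_1,t_1),(s_2,t_2)\in U$ we have $\varphi(s_1)=\psi(t_1)$ and $\varphi(s_2)=\psi(t_2)$, so if $(s_1,t_2)\in U$, i.e. $\varphi(s_1)=\psi(t_2)$, then $\psi(t_1)=\varphi(s_1)=\psi(t_2)=\varphi(s_2)$, giving $(s_2,t_1)\in U$; the reverse direction is symmetric. This part is routine.

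The substance of the lemma lies in the converse. Assuming $\alpha$ and $\beta$ commute, I would invoke the standard universal-algebra fact that the composite of two commuting congruences is again a congruence and coincides with their join: $\gamma := \alpha\circ\beta = \beta\circ\alpha = \alpha\vee\beta$. (Reflexivity is clear; symmetry follows from $(\alpha\circ\beta)^{-1}=\beta^{-1}\circ\alpha^{-1}=\beta\circ\alpha=\alpha\circ\beta$; transitivity from $\gamma\circ\gamma=\alpha\circ(\beta\circ\alpha)\circ\beta=\alpha\circ(\alpha\circ\beta)\circ\beta=\alpha\circ\beta=\gamma$; and compatibility with multiplication is inherited from that of each factor.) I would then set $F := U/\gamma$ with quotient map $q\colon U\to F$. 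Since $\alpha=\textup{ker}\,\pi_S\subseteq\gamma=\textup{ker}\,q$ and $\pi_S$ is surjective, the homomorphism theorem furnishes a unique epimorphism $\varphi\colon S\to F$ with $\varphi\circ\pi_S=q$, and symmetrically an epimorphism $\psi\colon T\to F$ with $\psi\circ\pi_T=q$.

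It then remains to check $U=\fibpro$. The inclusion $U\subseteq\fibpro$ is immediate: for $(s,t)\in U$ one has $\varphi(s)=q(s,t)=\psi(t)$. The reverse inclusion is where commutativity does the real work, and I expect it to be the main obstacle. Given $(s,t)$ with $\varphi(s)=\psi(t)$, surjectivity of $\pi_S,\pi_T$ yields $u_1=(s,t_0),\,u_2=(s_0,t)\in U$; then $q(u_1)=\varphi(s)=\psi(t)=q(u_2)$, so $(u_1,u_2)\in\gamma=\alpha\circ\beta$. Choosing a witness $u_3\in U$ with $(u_1,u_3)\in\alpha$ and $(u_3,u_2)\in\beta$ forces the first coordinate of $u_3$ to equal $s$ and its second coordinate to equal $t$, whence $u_3=(s,t)\in U$. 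The delicate points are only that $\varphi,\psi$ are genuine (well-defined, surjective) homomorphisms, which the homomorphism theorem guarantees once $\alpha,\beta\subseteq\gamma$, and that $\gamma$ really collapses into a single $\alpha$-step followed by a single $\beta$-step — which is exactly the content of the commutativity hypothesis.
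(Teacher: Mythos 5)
The paper does not prove this lemma at all: it is stated as a citation (``cf.\ Fleischer's Lemma, \cite{fleisch}, Lemma 10.1''), so the only meaningful comparison is with the classical argument in Burris--Sankappanavar, and your proof is exactly that classical argument, correctly executed: the coordinate description of $\ker\pi_S\circ\ker\pi_T$, the observation that commuting congruences compose to a congruence $\gamma=\alpha\circ\beta$, the induced epimorphisms $\varphi,\psi$ onto $U/\gamma$ via the homomorphism theorem, and the witness-chasing step $(u_1,u_3)\in\alpha$, $(u_3,u_2)\in\beta$ forcing $u_3=(s,t)\in U$ for the reverse inclusion. All steps are sound, so your proposal is correct and coincides in substance with the proof the paper implicitly relies on.
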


An \textit{alphabet} is a set $A$ consisting of formal symbols, where the elements of $A$ are referred to as  \textit{letters}. The \textit{free semigroup} $A^{+}$ is the set of all finite non-empty strings of letters over $A$, with the operation of concatenation of strings. Allowing for the empty string $\varepsilon$ (being the string consisting of no letters), the \textit{free monoid} $A^{*}$ is the set of all finite words over $A$, again with the operation of concatenation. A \textit{word over} $A$ is an element of $A^{*}$ (where the \textit{empty word} is identified to be the empty string $\varepsilon$). For a word $w \in A^{+}$, we will write $w_{i}$ for the $i$-th letter of $w$. A \textit{prefix} of a word $w \in A^{*}$ is an element $u \in A^{*}$ such that there exists $v \in A^{*}$ with $w = uv$. A \textit{proper prefix} $u$ of $w \in A^{*}$ is a prefix which is not equal to $w$. If $u$ is a prefix of $w$, we will write $u \leq_{p} w$ (and $u <_{p} w$ if $u$ is a proper prefix). Similarly, a \textit{suffix} of a word $w \in A^{*}$ is an element $v \in A^{*}$ such that  there exists $u \in A^{*}$ with $w = uv$, and a \textit{proper suffix} of $w \in A^{*}$ is a suffix $v$ not equal to $w$. If $v$ is a suffix of $w$, we will write $v \leq_{s} w$ (and $v <_{s} w$ if $v$ is a proper suffix). For a prefix $u$ (resp. suffix $v$) of $w$, we write $u^{-1}w$ (resp. $wv^{-1}$) to mean the unique word $u'$ (resp. $v'$) such that $uu' = w$ (resp. $v'v = w$), or equivalently the word $w$ with prefix $u$ (resp. suffix $v$) removed.

For a semigroup $S$, an \textit{idempotent} is an element $e \in S$ such that $e^{2} = e$. The set of all idempotents of a semigroup is denoted $E(S)$. A semigroup $S$ will be called \textit{idempotent-free} if $E(S) = \emptyset$. An element $s$ of a semigroup $S$ is called \textit{indecomposable} if there are no $s_{1},s_{2} \in S$ such that $s = s_{1}s_{2}$ (and is otherwise called \textit{decomposable}). Similarly, an element $m$ of a monoid $M$ is called \textit{indecomposable} if there are no \linebreak $m_{1},m_{2} \in M\setminus \{1_{M}\}$ such that $m = m_{1}m_{2}$ (and is otherwise called \textit{decomposable}).

For a semigroup $S$, let $1$ be a symbol not in $S$, and define $S^{1} := S$ if $S$ has an identity, and $S\cup\{1\}$ otherwise, where $1$ acts as an identity on $S$. \textit{Green's relations} $\R, \L, \H, \J$ are the equivalence relations on $S$ that can be given by the following:
\begin{alignat*}{3}
& (s,t) \in \R &&\Leftrightarrow &&\, (\exists x,y \in S^{1})(s = tx)(t = sy); \\
& (s,t) \in \L &&\Leftrightarrow &&\, (\exists x,y \in S^{1})(s = xt)(t = ys); \\
& (s,t) \in \H &&\Leftrightarrow &&\, (s,t) \in \R\cap \L;\\
& (s,t)\in \J &&\Leftrightarrow &&\, (\exists x,x',y,y' \in S^{1})(s = xty)(t = x'sy').
\end{alignat*}

For $\K \in \{\R,\L,\H,\J\}$, we say that a semigroup $S$ is $\K$\textit{-trivial} if $\K = \{(s,s) : s \in S\}$. Note that as $\H \subseteq \R \subseteq \J$ and $\H \subseteq \L \subseteq \J$, in particular if a semigroup is $\J$-trivial, it is also $\R$-trivial, $\L$-trivial and $\H$-trivial.

A semigroup $S$ is said to be \textit{finitely generated} if there exists a finite subset $X$ of $S$ such that $S = \langle X \rangle$ (i.e the elements of $S$ are expressible as finite products of elements in $X$). Given a semigroup $S$ and $X\subseteq $ a finite generating set, the \textit{word problem of } $S$ with respect to $X$ is given by \[\text{WP}(S,X) = \{(u,v) \in X^{+} \times X^{+} : u =_{S} v\},\] where $u =_{S} v$ if $u$ and $v$ represent the same word in $S$ when written as words over the generating set $X$. The word problem of $S$ is said to be \textit{decidable} with respect to $X$ if there exists an algorithm taking $S$, a finite generating set $X \subseteq S$ and any $(u,v)\in X^{+} \times X^{+}$ as inputs which determines whether or not $(u,v) \in \text{WP}(S,X)$. 

Given a finitely generated semigroup $S$, a finitely generated subsemigroup $T$ of $S$ and a generating set $X$ for $S$, the \textit{generalized word problem of} $T$ \textit{ in } $S$ is the set of words over $X$ which represent an element in $T$. The generalized word problem is said to be decidable if there is an algorithm taking $S, X$ and a finite subset $Y$ of $X^{*}$ generating $T$, which decides whether or not a word $w$ over $X$ represents an element in $\langle Y\rangle$. 

\section{Fiber products of free semigroups/monoids over finite fiber quotients}
\label{sec2}
\setcounter{propn}{0}
This section is devoted to classifying the finite fiber quotients $F$ and associated epimorphisms $\varphi, \psi$ with free semigroup/monoid domains for which $\fibpro$ is finitely generated.

We begin by showing in the free semigroup case, there are no such fiber quotients.
\begin{propn}\thlabel{FreeSemiFin} Let $\varphi: A^{+}\rightarrow S$, $\psi: B^{+} \rightarrow S$ be epimorphisms where $S$ is a finite semigroup. Then the fiber product $\fibpro$ of $A^{+}$ with $B^{+}$ over $S$ with respect to $\varphi, \psi$ is not finitely generated.
\begin{proof}
Let $(u,v) \in \fibpro$. Then there exists some $s \in S$ such that $\varphi(u) = \psi(v) = s$. As $S$ is finite, there exists some $k \in \mathbb{N}$ such that $s^{k}$ is idempotent. Hence $(u^{k},v^{nk}) \in \fibpro$ for all $n \in \mathbb{N}$.

 Suppose for a contradiction that $X = \{(u_{i},v_{i}) : 1 \leq i \leq p\}  \subseteq A^{+} \times B^{+}$ were a finite generating set for $\fibpro$. Then as $u^{k}$ can be decomposed into at most $k|u|$ factors in $A^{+}$, it follows that each pair $(u^{k},v^{nk})$ can be decomposed into at most $k|u|$ factors in $\langle X \rangle$. This is a contradiction, as this implies that $|v^{nk}| \leq k|u|\max_{1 \leq i \leq p}|v_{i}|$ for all $n \in \mathbb{N}$. Hence $\fibpro$ is not finitely generated.
\end{proof}
\end{propn}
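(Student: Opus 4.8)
The plan is to exploit the finiteness of $S$ to build, inside the fiber product, an infinite family of pairs whose first coordinate stays fixed while the second coordinate grows without bound, and then to observe that a finite generating set is incapable of producing such pairs because, in a free semigroup, word length controls the number of factors.

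First I would fix any single element $(u,v) \in \fibpro$, so that $\varphi(u) = \psi(v) =: s$ for some $s \in S$. Since $S$ is finite the cyclic subsemigroup $\langle s \rangle$ is finite, so some power $s^{k}$ is idempotent. The decisive observation is then that $(u^{k}, v^{nk}) \in \fibpro$ for every $n \in \N$: indeed $\varphi(u^{k}) = s^{k}$ while $\psi(v^{nk}) = s^{nk} = (s^{k})^{n} = s^{k}$ by idempotency, so the two images coincide. This produces an infinite family of fiber-product elements in which the first coordinate $u^{k}$ is constant, of length $k|u|$, whereas the second coordinate has length $nk|v| \to \infty$. The idempotency is exactly what decouples the growth of the two coordinates.

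The contradiction comes from a counting argument on factor lengths. Suppose $X = \{(u_{i},v_{i}) : 1 \le i \le p\}$ were a finite generating set for $\fibpro$. Because multiplication is coordinatewise and $A^{+}$ is free, a product of $m$ generators has first coordinate $u_{i_{1}}\cdots u_{i_{m}}$ of length at least $m$ (each $u_{i}$ being a nonempty word). Hence writing $(u^{k},v^{nk})$ as a product of generators uses at most $k|u|$ of them, which forces $|v^{nk}| \le k|u|\,\max_{1 \le i \le p}|v_{i}|$, a bound independent of $n$. Since $|v^{nk}| = nk|v|$ is unbounded, no finite $X$ can exist, and $\fibpro$ is not finitely generated.

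I do not anticipate a serious obstacle: once the idempotency trick is in place the rest is immediate. The only point demanding a little care is that every generator must have nonempty first \emph{and} second coordinate, so that length genuinely bounds the factor count in either coordinate; this holds automatically here since the generators lie in $A^{+}\times B^{+}$ and the free semigroup excludes the empty word. The essential insight is simply that a finite quotient forces an idempotent power, and idempotency lets one inflate one coordinate arbitrarily while pinning the other.
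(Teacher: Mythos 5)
Your proposal is correct and follows essentially the same argument as the paper: obtain an idempotent power $s^{k}$ from finiteness of $S$, produce the family $(u^{k},v^{nk}) \in \fibpro$, and derive the contradiction from the fact that nonempty first coordinates bound the number of factors by $k|u|$, hence bound $|v^{nk}|$ independently of $n$. The only difference is that you spell out the length-counting justification slightly more explicitly, which the paper leaves implicit.
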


%
%This motivates the following question:
%
%\begin{que}
%Is there a fiber-like construction for the finitely generated subdirect products of finitely generated free semigroups $A^{+}$, $B^{+}$ involving a finite subalgebra of $A\times B$, which generalises the fiber product construction?
%\end{que}

For the remainder of this section, we work towards giving necessary and sufficient conditions for fiber products of two finitely generated free monoids over finite fiber quotients to be finitely generated. Our next lemma shows that such quotients are necessarily restricted to the class of finite groups.

\begin{lemma}Let $\varphi : A^{*} \rightarrow M$, $\psi : B^{*} \rightarrow M$ be epimorphisms onto a finite monoid $M$. If $M$ is not a group, then $\fibpro$ is not finitely generated. \thlabel{FreeMonGroupFibers}
\begin{proof}
$\psi(B)$ is a generating set for $M$ by surjectivity. As $M$ is finite monoid which is not a group, then there exists some $m \in \psi(B)$  and $k \in \mathbb{N}$ such that $m^{k}$ is idempotent, but $m^{k} \not = 1_{M}$. 

As $\varphi$ and $\psi$ are surjections, then there exists a word $u \in A^{*}$ and a letter $b \in B$ such that $\varphi(u) = m = \psi(b)$. Hence $\{(u^{k},b^{nk}) : n \in \mathbb{N}\} \subseteq \fibpro.$

Suppose for a contradiction that $X = \{(u_{i},v_{i}) : 1 \leq i \leq p\} \subseteq A^{*} \times B^{*} $ were a finite generating set for $\fibpro$. As $\psi(b^{j}) \not = 1_{M}$ for all $j \in \mathbb{N}$ then it follows that \linebreak $(\epsa,b^{j}) \not \in \fibpro$ for all $j \in \mathbb{N}$, hence we must have $(u^{k},b^{nk}) \in \langle X' \rangle$ for all $n \in \mathbb{N}$, where $X' = \{(u_{i},v_{i}) \in X : u_{i} \not = \varepsilon\}$. Then as $u^{k}$ can be decomposed into at most $k|u|$ non-empty factors in $X^{*}$, it follows that each pair $(u^{k},b^{nk})$ can be decomposed into at most $k|u|$ factors in $\langle X' \rangle$. This is a contradiction, as this implies that $|b^{nk}| \leq k|u|\max_{1 \leq i \leq p}|v_{i}|$ for all $n \in \mathbb{N}$. Hence $\fibpro$ is not finitely generated.
\end{proof}
\end{lemma}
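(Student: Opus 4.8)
The plan is to adapt the strategy of \thref{FreeSemiFin}: I would exhibit an infinite family of elements of $\fibpro$ whose first coordinates have uniformly bounded length while their second coordinates grow without bound, and then contradict the existence of a finite generating set by a length count. The novelty over the free semigroup case lies entirely in controlling factors with an empty first coordinate.

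First I would locate a non-identity idempotent arising from a non-unit generator. Since $\psi$ is surjective, $\psi(B)$ generates $M$ as a monoid; if every element of $\psi(B)$ were a unit, then $M = \langle \psi(B)\rangle$ would consist solely of units and hence be a group, contrary to hypothesis. So there is a letter $b \in B$ with $m := \psi(b)$ not a unit, and since $M$ is finite some power $m^{k}$ is idempotent with $m^{k} \neq 1_{M}$ (else $m^{k-1}$ would invert $m$). Using surjectivity of $\varphi$ I would pick $u \in A^{*}$ with $\varphi(u) = m$. Because $m^{k}$ is idempotent, $\varphi(u^{k}) = m^{k} = m^{nk} = \psi(b^{nk})$ for every $n$, so $\{(u^{k}, b^{nk}) : n \in \N\} \subseteq \fibpro$; this is the required family, with first coordinate of fixed length $k|u|$.

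Suppose for contradiction that $X = \{(u_{i},v_{i}) : 1 \leq i \leq p\}$ generates $\fibpro$, and fix a factorisation of $(u^{k}, b^{nk})$ over $X$. The hard part, which is absent in the semigroup setting, is that such a factorisation may use generators of the form $(\epsa, v_{i})$ contributing nothing to the bounded first coordinate while potentially inflating the second coordinate arbitrarily. The observation that rules them out is that each factor's second coordinate is a subword of $b^{nk}$, hence a (possibly empty) power of $b$; and any generator $(u_{i},v_{i}) \in \fibpro$ with $u_{i} = \epsa$ satisfies $\psi(v_{i}) = \varphi(\epsa) = 1_{M}$, whereas $\psi(b^{j}) = m^{j} \neq 1_{M}$ for every $j \geq 1$ since $m$ is a non-unit. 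Thus any empty-first-coordinate generator that appears must have $v_{i} = \varepsilon$ and so contributes nothing, and $(u^{k}, b^{nk})$ already lies in the submonoid generated by $X' = \{(u_{i},v_{i}) \in X : u_{i} \neq \epsa\}$.

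It then remains to run the length count. Each generator in $X'$ contributes at least one letter to $u^{k}$, whose length is $k|u|$, so at most $k|u|$ generators occur in the factorisation, forcing $|b^{nk}| \leq k|u|\max_{1 \leq i \leq p}|v_{i}|$ for all $n$. Since the left-hand side equals $nk$ and grows without bound, this is the desired contradiction, so $\fibpro$ is not finitely generated. I expect the only genuine subtlety to be the elimination of the empty-first-coordinate generators via the non-unit property of $m$; everything else transcribes directly from the argument for \thref{FreeSemiFin}.
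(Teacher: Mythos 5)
Your proof is correct and follows essentially the same route as the paper's: the same family $\{(u^{k},b^{nk}) : n \in \N\}$, the same elimination of empty-first-coordinate generators via $\psi(b^{j}) \neq 1_{M}$, and the same length count on the bounded first coordinate. Your explicit derivation of a non-unit generator (and the observation that its positive powers can never equal $1_{M}$) is just a slightly more spelled-out version of the paper's terse opening claim that some $m \in \psi(B)$ has idempotent power $m^{k} \neq 1_{M}$.
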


Our next lemma refines the previous result, to show that the fiber quotients of interest must be cyclic groups.

\begin{lemma}Let $\varphi : A^{*} \rightarrow G$, $\psi : B^{*} \rightarrow G$ be epimorphisms where $G$ is a finite non-cyclic group. Then $\fibpro$ is not finitely generated. \thlabel{FreeMonCyclicFibers}
\begin{proof} $\psi(B)$ is a finite generating set for the group $G$ by surjectivity. As $G$ is not cyclic, then there exist elements $g,h \in \psi(B)$ such that 
\begin{equation} g \not = h^{p} \text{ and } h \not = g^{p} \text{ for all } p \in \mathbb{N}_{0}. \label{notcyc}\end{equation} Note that \eqref{notcyc} implies $gh^{p} \not = 1_{G}$ for any $p \in \mathbb{N}$, By surjectivity, there exist distinct letters $a,b \in B$ such that $\psi(a) = g$ and $\psi(b) = h$. In particular, $\psi(ab^{p}) \not = 1_{G}$ for any $p \in \N$.

As $G$ is a finite group, let $j,k$ denote the orders of the elements $g$, $h$ respectively. Note in particular that $j,k > 1$. Then it follows that $\psi(ab^{nk}a^{j-1}) = 1_{G}$ for all $n \in \mathbb{N}$. Hence $$\{ (\epsa,ab^{nk}a^{j-1}) : n \in \mathbb{N}\} \subseteq \fibpro$$.

We claim that $(\epsa, ab^{nk}a^{j-1})$ for $n \in \mathbb{N}$ is indecomposable in $\fibpro$. Suppose to the contrary that there were a non-trivial decomposition
\begin{equation*}(\epsa,ab^{nk}a^{j-1}) = \prod_{i=1}^{p} (u_{i},v_{i}). \end{equation*}
Clearly, it must be the case that each $u_{i} = \varepsilon$. Hence none of the $v_{i}$ are empty. Consider the subword $v_{1}$. Then $a$ must be a prefix of $v_{1}$, as $v_{1}$ is not empty. As $\psi(a) \not = 1_{G}$, then $v_{1} \not = a$, and hence $ab$ is a prefix of $v_{1}$. As $\psi(ab^{p}) \not = 1_{G}$ for any $p \in \mathbb{N}$, it follows that $ab^{nk}a$ is a prefix of $v_{1}$. Finally, as $\psi(a^{p}) \not = 1_{G}$ for any $1 \leq p < j$, it must be that $v_{1} = ab^{nk}a^{j-1}$. Hence the claim is proved, and as any generating set for $\fibpro$ must contain the indecomposable elements of $\fibpro$, then $\fibpro$ is not finitely generated. 
\end{proof}
\end{lemma}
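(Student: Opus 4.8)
The plan is to produce infinitely many indecomposable elements of $\fibpro$; since any generating set must contain every indecomposable element, an infinite supply of them rules out finite generation. The failure of cyclicity is exactly what supplies these elements. Because $\psi(B)$ generates $G$ but no single element does, I would first locate two letters $a, b \in B$ whose images $g = \psi(a)$ and $h = \psi(b)$ satisfy that neither is a power of the other. Such a pair must exist: if the cyclic subgroups $\langle \psi(b) \rangle$ (for $b \in B$) were totally ordered by inclusion, the largest would contain the images of all generators and force $G$ to be cyclic, a contradiction. Note that $a \neq b$ is then automatic, since $\psi(a) = \psi(b)$ would make $g$ the first power of $h$.

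Writing $j, k$ for the orders of $g, h$ (both exceeding $1$ as $g, h \neq 1_G$), I would then check that $\psi(ab^{nk}a^{j-1}) = g\,h^{nk}\,g^{j-1} = g^{j} = 1_G$ for every $n \in \mathbb{N}$, so that each pair $(\epsa, ab^{nk}a^{j-1})$ lies in $\fibpro$, its first coordinate mapping to $1_G$ under $\varphi$. This is the candidate infinite family, and it remains only to show its members are indecomposable.

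The heart of the argument is this indecomposability. In any factorization the first coordinates multiply to $\epsa$, forcing every factor to have the shape $(\epsa, v_i)$ with $v_i$ nonempty and $\psi(v_i) = 1_G$. I would then examine the leading factor $v_1$ as a prefix of $ab^{nk}a^{j-1}$ and exclude every proper nonempty prefix from mapping to $1_G$: the prefix $a$ yields $g \neq 1_G$; a prefix $ab^{t}$ with $1 \leq t \leq nk$ yields $gh^{t}$, which cannot equal $1_G$ without making $g$ a power of $h$; and a prefix $ab^{nk}a^{s}$ with $1 \leq s \leq j-2$ yields $g^{s+1} \neq 1_G$, so only the full trailing block $a^{j-1}$ returns to $1_G$. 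Hence $v_1$ can only be the entire word, and no nontrivial factorization exists. I expect this prefix analysis to be the main obstacle, since it requires converting the two non-power relations into the precise statement that no proper nonempty prefix returns to the identity, with the $b$-block relying essentially on $g$ not being a power of $h$ and the trailing $a$-block on the order of $g$. With indecomposability established, the family is infinite, and therefore $\fibpro$ is not finitely generated.
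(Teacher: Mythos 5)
Your proposal is correct and takes essentially the same approach as the paper: the identical infinite family $(\varepsilon_{A}, ab^{nk}a^{j-1})$, the same prefix-by-prefix verification that no proper nonempty prefix maps to $1_{G}$, and the same conclusion via indecomposable elements belonging to every generating set. The only difference is cosmetic: you justify the existence of the pair $g,h$ with neither a power of the other by a maximal-cyclic-subgroup chain argument, a detail the paper leaves implicit.
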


Finally, we give all conditions on finite fiber quotients and epimorphisms for which the associated fiber product of two free monoids is finitely generated.

\begin{thm}
Let $\varphi : A^{*} \rightarrow F$, $\psi : B^{*} \rightarrow F$ be epimorphisms where $F$ is finite. Then the fiber product of $A^{*}$ with $B^{*}$ over $F$ with respect to $\varphi, \psi$ is finitely generated if and only if $|\varphi(A)| = |\psi(B)| = 1$, and $F$ is a cyclic group.
%the following hold:
%\begin{enumerate}[label=\textup{(\roman*)}, widest=(iii), leftmargin=10mm]
%\item $|A|, |B| < \infty$;
%\item $F$ is a cyclic group;
%\item $(\forall a,a' \in A)(\varphi(a) = \varphi(a'))$;
%\item $(\forall b,b' \in B)(\psi(b) = \psi(b'))$.
%\end{enumerate}

\thlabel{FreeMonIff}
\begin{proof}
%$\left(\Rightarrow\right)$ For (i) if $\fibpro$ is finitely generated, then as $A^{*}$ and $B^{*}$ are homomorphic images of $\fibpro$ under projection, it follows that $A^{*}$ and $B^{*}$ are finitely generated, and hence $|A|,|B| < \infty$. For (ii)-(iv), we prove the contrapositive. 

If $F$ is not a cyclic group, then $\fibpro$ is not finitely generated by \thref{FreeMonGroupFibers} and \thref{FreeMonCyclicFibers}. Otherwise, let $F = \text{Gp}\langle x : x^{n} = 1\rangle$, and suppose $\varphi : A^{*} \rightarrow F$ is such that $|\varphi(A)| > 1$. Then $\varphi(a) \not = \varphi(a')$ for some $a,a' \in A$, and we can choose $a_{1} \in \{a, a'\}$ such that $\varphi(a_{1})\not = 1$. We can also choose $a_{2} \in \{a,a'\}$ such that $\varphi(a_{1}a_{2}) \not = 1$, for otherwise $\varphi(a_{1}a) = \varphi(a_{1}a') \implies \varphi(a)= \varphi(a')$. 

Repeating this process, we can construct an arbitrarily long word $a_{1}a_{2}...a_{n} \in A^{+}$ such that $\varphi(a_{1}a_{2}\hdots a_{i}) \not = 1$ for $1 \leq i \leq n$. Letting $g = \varphi(a_{1}a_{2} \hdots a_{n})$, there exists some $u \in A^{+}$ of minimal length such that $g^{-1} = \varphi(u)$. Hence we can choose a sequence of words $\{w_{n}\}_{n \in \mathbb{N}} \subseteq A^{+}$ such that $|w_{n}| < |w_{n+1}|$, $\varphi(w_{n}) = 1$, but $\varphi(v) \not = 1$ for any prefix $v$ of $w_{n}$. It then follows that
$$\{(\epsa,w_{n}) : n \in \mathbb{N}\} \subseteq \fibpro,$$
\noindent and each pair $(\epsa,w_{n})$ is indecomposable in $\fibpro$, for otherwise there would exist a proper prefix $v$ of $w_{n}$ such that $\varphi(v) = 1$.

If $|\psi(B)| > 1$ is such that $\psi(b)\not = \psi(b')$ for some $b,b' \in B$, then the same argument also shows that $\fibpro$ is not finitely generated.

$\left(\Leftarrow\right)$ Supposing that $F$ is a cyclic group and $\varphi, \psi$ satisfy the conditions of the theorem, then $F = \text{Gp}\langle x : x^{n} = 1 \rangle$ for some $n \in \mathbb{N}$, and $\varphi(A) = \{x^{p}\}$, $\psi(B) = \{x^{q}\}$ for some $1 \leq p,q \leq n$ with $\text{gcd}(p,n) = \text{gcd}(q,n) = 1$. If $(u,v) \in \fibpro$, then
\begin{eqnarray*}
&& \varphi(u) = \psi(v) \\
\Leftrightarrow && (x^{p})^{|u|} = (x^{q})^{|v|} \\
\Leftrightarrow && x^{p|u| \,(\text{mod}\,n)} = x^{q|v|\,(\text{mod}\,n)} \\
\Leftrightarrow && p|u| \,(\text{mod}\,n) = q|v|\,(\text{mod}\,n).
\end{eqnarray*}
Hence 
$$\fibpro = \{(u,v) \in A^{*} \times B^{*} : p|u| \equiv q|v| \,(\text{mod}\,n)\}.$$
We claim that $\fibpro$ is finitely generated by
\begin{alignat*}{3}X  =& &&  \{(u,v) \in A^{*} \times B^{*}: p|u| \equiv q|v| \,(\text{mod}\,n), 0 \leq |u|,|v| \leq n\} \\ 
 &\setminus&& \{(u,v) \in A^{*}\times B^{*} : |u| = |v| = n\}.\end{alignat*}
Let $(u,v) \in \fibpro$. Let $p'$ be such that $pp' \equiv 1 \, (\text{mod}\,n)$, and let $q'$ be such that $qq' \equiv 1 \, (\text{mod}\,n)$. Then as $|u| = k_{1}n + r_{1}$ for some $0 \leq r_{1} < n$ with $r_{1} \equiv p'q|v| \,(\text{mod}\,n)$, $k_{1} \in \mathbb{N}_{0}$, and $|v| = k_{2}n + r_{2}$ for some $0 \leq r_{2} < n$ with $r_{2} \equiv q'p|u| \,(\text{mod}\,n)$, $k_{2} \in \mathbb{N}_{0}$, it follows that
$$u = u'x_{1}x_{2}\hdots x_{k_{1}}$$ 
for some $u', x_{i} \in A^{*}$ with $|u'| = r_{1}$, $|x_{i}| = n$ for $1 \leq i \leq k_{1}$, and similarly
$$v = v'y_{1}y_{2}\hdots y_{k_{2}}$$
for some $v', y_{i} \in B^{*}$ with $|v'| = r_{2}$, $|y_{i}| = n$ for $1 \leq i \leq k_{2}$. Then we have:
\begin{itemize}
\item $(u',v') \in X$, as $0 \leq |u'|,|v'| \leq n$, and $$p|u'| = pr_{1} \equiv pp'q|v| \equiv q|v| \equiv qr_{2} \equiv q|v'| \, (\text{mod}\,n);$$
\item $(x_{i},\epsb) \in X$ for all $1 \leq i \leq k_{1}$, as $p|x_{i}| \equiv 0 \equiv q|\epsb| \,(\text{mod}\,n)$;
\item $(\epsa,y_{i}) \in X$ for all $1 \leq i \leq k_{2}$, as $q|y_{i}| \equiv 0 \equiv p|\epsa| \,(\text{mod}\,n)$.
\end{itemize}

Hence $$(u,v) = (u',v')(x_{1},\epsb)\hdots(x_{k_{1}},\epsb)(\epsa, y_{1})\hdots(\epsa,y_{k_{2}}) \in \langle X \rangle,$$
proving the claim.\end{proof}	
\end{thm}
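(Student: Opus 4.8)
The plan is to prove the two directions separately. For the forward direction, I would show that if either $\varphi$ or $\psi$ fails to send all generators to a single element of $F$, or if $F$ is not a cyclic group, then $\fibpro$ is not finitely generated. The case where $F$ is not a cyclic group is already handled by \thref{FreeMonGroupFibers} and \thref{FreeMonCyclicFibers}, so I may invoke those directly. For the remaining case, I would assume $F$ is a cyclic group but $|\varphi(A)| > 1$ and construct infinitely many indecomposable elements of the form $(\epsa, w_n)$ (or symmetrically $(w_n, \epsb)$), thereby forcing any generating set to be infinite. The idea is that since any generating set must contain all indecomposable elements, exhibiting an infinite family of indecomposables is enough.

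The construction of such indecomposables is the crux of the forward direction. Since $|\varphi(A)| > 1$, there exist $a, a' \in A$ with $\varphi(a) \neq \varphi(a')$, and I can greedily build arbitrarily long words $w = a_1 a_2 \cdots a_n$ in which every proper prefix maps to a non-identity element of $F$: at each step I pick the next letter from $\{a, a'\}$ so that the extended prefix still avoids $1_F$, which is always possible precisely because $\varphi(a) \neq \varphi(a')$. Completing such a prefix to a word $w_n$ with $\varphi(w_n) = 1_F$ but no proper prefix mapping to $1_F$ yields $(\epsa, w_n) \in \fibpro$ that is indecomposable — any nontrivial factorization would force the first factor to be of the form $(\epsa, v)$ with $v$ a proper prefix satisfying $\varphi(v) = 1_F$, contradicting the prefix condition. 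Varying the length gives infinitely many such elements.

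For the converse, I would write $F = \mathrm{Gp}\langle x : x^n = 1 \rangle$ with $\varphi(A) = \{x^p\}$ and $\psi(B) = \{x^q\}$ where necessarily $\gcd(p,n) = \gcd(q,n) = 1$ (so that $x^p$ and $x^q$ each generate $F$). The key reduction is the identity $\fibpro = \{(u,v) : p|u| \equiv q|v| \pmod{n}\}$, which follows immediately from the fact that $\varphi$ and $\psi$ depend only on word length modulo $n$. Then I would propose the explicit finite generating set $X$ consisting of all pairs $(u,v)$ in the fiber product with $0 \leq |u|, |v| \leq n$, excluding the pairs with $|u| = |v| = n$ (these are redundant, decomposing as a length-$n$ first coordinate paired with the empty second coordinate, times the empty first coordinate paired with a length-$n$ second coordinate). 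To show $X$ generates, I would take arbitrary $(u,v) \in \fibpro$, write $|u| = k_1 n + r_1$ and $|v| = k_2 n + r_2$ with $0 \leq r_1, r_2 < n$, split $u = u' x_1 \cdots x_{k_1}$ and $v = v' y_1 \cdots y_{k_2}$ into a short head and length-$n$ blocks, and verify that $(u', v')$, each $(x_i, \epsb)$, and each $(\epsa, y_i)$ lie in $X$.

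The main obstacle is the arithmetic in the converse: one must check that the residues $r_1, r_2$ are consistent with membership of $(u', v')$ in $X$, i.e. that $p r_1 \equiv q r_2 \pmod{n}$. This requires choosing $r_1 \equiv p' q |v|$ and $r_2 \equiv q' p |u| \pmod n$ where $p', q'$ are modular inverses of $p, q$, and then chaining congruences $p|u'| = p r_1 \equiv p p' q |v| \equiv q|v| \equiv q r_2 = q|v'| \pmod n$. Setting up these inverses correctly and confirming the length-$n$ blocks satisfy $p \cdot n \equiv 0 \equiv q \cdot 0$ and $q \cdot n \equiv 0 \equiv p \cdot 0$ modulo $n$ is the technical heart, though each step is routine once the bookkeeping is fixed.
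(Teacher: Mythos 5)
Your proposal is correct and takes essentially the same approach as the paper's proof: the forward direction invokes \thref{FreeMonGroupFibers} and \thref{FreeMonCyclicFibers} for the non-cyclic case and then uses the same greedy prefix-avoidance construction of infinitely many indecomposable pairs, while the converse proposes the identical generating set $X$ and verifies it via the same head-plus-length-$n$-blocks decomposition and modular inverse arithmetic. No gaps to report.
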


\begin{thm}\thlabel{fingenifffinpres}
Let $\varphi : A^{*} \rightarrow F$, $\psi : B^{*} \rightarrow F$ be epimorphisms where $F$ is finite. If the fiber product of $A^{*}$ with $B^{*}$ over $S$ with respect to $\varphi, \psi$ is finitely generated, then it is is also finitely presented.
\end{thm}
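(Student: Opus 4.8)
The plan is to invoke \thref{FreeMonIff} to reduce at once to the only situation in which the hypothesis can hold: $F=\mathrm{Gp}\langle x:x^{n}=1\rangle$ is cyclic of order $n$, with $\varphi(A)=\{x^{p}\}$, $\psi(B)=\{x^{q}\}$ and $\gcd(p,n)=\gcd(q,n)=1$, so that
\[
\fibpro=\{(u,v)\in A^{*}\times B^{*}:p|u|\equiv q|v|\ (\mathrm{mod}\ n)\},
\]
and the explicit finite generating set $X$ constructed in the proof of \thref{FreeMonIff} is at hand. Structurally, $\fibpro=\theta^{-1}(0)$ for the homomorphism $\theta\colon A^{*}\times B^{*}\to\mathbb{Z}_{n}$, $(u,v)\mapsto p|u|-q|v|\ (\mathrm{mod}\ n)$, onto a finite group; so this is morally a Reidemeister--Schreier statement for a finite-index submonoid of the finitely presented monoid $A^{*}\times B^{*}$. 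Rather than develop that machinery, I would exhibit a finite presentation $\langle X\mid R\rangle$ directly, by producing a normal form together with a bounded set of defining relations.

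First I would sort the non-identity generators of $X$ by their length type $(|u|,|v|)$. Since $\gcd(p,n)=\gcd(q,n)=1$, the congruence $p|u|\equiv q|v|$ forces $|u|\equiv 0$ and $|v|\equiv 0$ simultaneously; hence every generator is of exactly one of three kinds: an \emph{$A$-block} $(x,\epsb)$ with $|x|=n$, a \emph{$B$-block} $(\epsa,y)$ with $|y|=n$, or a \emph{head} $(u,v)$ with $1\le|u|,|v|\le n-1$ (the generator $(\epsa,\epsb)$ being identified with the identity). The normal form is then the factorization already appearing in \thref{FreeMonIff}: given $(u,v)\in\fibpro$, write $u=u'x_{1}\cdots x_{k_{1}}$ and $v=v'y_{1}\cdots y_{k_{2}}$ where $u',v'$ are the prefixes of lengths $|u|\bmod n$ and $|v|\bmod n$ and each $x_{i},y_{j}$ has length $n$, and set
\[
(u,v)\ \longmapsto\ (u',v')\,(x_{1},\epsb)\cdots(x_{k_{1}},\epsb)\,(\epsa,y_{1})\cdots(\epsa,y_{k_{2}}),
\]
an (optional) head followed by $A$-blocks and then $B$-blocks. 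Since the head and blocks recover $(u,v)$ uniquely, evaluation restricts to a bijection between normal-form words and $\fibpro$.

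I would then take $R$ to be the (finite) set of all relations $w_{1}=w_{2}$, $w_{1},w_{2}\in X^{*}$, that hold in $\fibpro$ and in which both coordinates have length at most $2n$; this set contains the three rewriting moves I need: (i) a head followed by an $A$-block or $B$-block equals (trivially) itself in normal order, whereas an $A$-block or $B$-block followed by a head can be transposed to \emph{head then block}, with the head and the block rewritten by shifting the appropriate $n-|u|$ (resp. $n-|v|$) letters; (ii) $(\epsa,y)(x,\epsb)=(x,\epsb)(\epsa,y)$, sorting $A$-blocks before $B$-blocks; and (iii) the product of two heads equals its normal form, namely one head plus at most one $A$-block and one $B$-block. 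To prove $\langle X\mid R\rangle\cong\fibpro$ I would show by strong induction on the number $t$ of factors that every $w=g_{1}\cdots g_{t}\in X^{*}$ satisfies $w=_{R}\mathrm{NF}(w)$: by induction $g_{2}\cdots g_{t}=_{R}\nu$ for a normal-form word $\nu$, and one absorbs the leading generator $g_{1}$ into $\nu$ from the left using finitely many applications of moves (i)--(iii), arriving at a word of normal-form shape, which is $\mathrm{NF}(w)$ by uniqueness. It follows that any relation $w_{1}=_{\fibpro}w_{2}$ reduces modulo $R$ to the common normal form, so $R$ is a complete set of defining relations and $\fibpro$ is finitely presented.

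The main obstacle is the completeness argument, i.e.\ the absorption step: one must check in each of the three cases for $g_{1}$ (head, $A$-block, $B$-block) that the leading generator can be driven into normal position by the moves in $R$, and crucially that each move is genuinely a relation between words whose coordinates have length at most $2n$ so that it lies in the finite set $R$. Verifying that the transposed heads and blocks produced in move (i) and the merge in move (iii) are again legitimate generators of $X$ (so that the bound $2n$ captures every rewrite) is the delicate bookkeeping on which the finiteness of the presentation rests; everything else is routine.
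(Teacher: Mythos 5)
Your proposal is correct and takes essentially the same route as the paper's own proof: reduce via \thref{FreeMonIff} to the cyclic fiber case, use the normal form consisting of a head $(u,v)$ with $0<|u|,|v|<n$ followed by $A$-blocks and then $B$-blocks, verify that finitely many relations suffice to rewrite every word to this form (your moves (i)--(iii) are precisely the paper's relations \eqref{r3}/\eqref{r2}, \eqref{r1} and \eqref{r4}), and conclude by uniqueness of normal forms. The only cosmetic differences are that you define $R$ as the set of all valid relations of bounded size rather than listing the rewriting relations explicitly, and you organize the rewriting as an induction absorbing the leading generator rather than the paper's three-stage procedure (A1)--(A3).
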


We introduce the next two Lemmas in order to prove \thref{fingenifffinpres}:
\begin{lemma}\thlabel{RelLem1}
Let $\varphi : A^{*} \rightarrow F$, $\psi : B^{*} \rightarrow F$ be epimorphisms (where \linebreak $F = \textup{Gp}\langle x : x^{n} = 1 \rangle$) satisfying $\varphi(A) = \{x^{p}\}, \psi(B) = \{x^{q}\}$ for some $1 \leq p,q \leq n$ with $\textup{gcd}(p,n) = \textup{gcd}(q,n) = 1$. Let \[\bar{\Gamma} := \{ \gamma{(u,v)} : u \in A^{*}, v \in B^{*}, \,p|u| \equiv q|v|\,\textup{mod}\,n, \,0 \leq |u|,|v| \leq n\}\]
and
\[\Gamma := \bar{\Gamma}\setminus \{\gamma(u,v) : u \in A^{*}, v \in A^{*}, |u| = |v| = n \textup{ or } |u| = |v| = 0\}\] be sets of formal symbols. Then the relations 
\begin{align}\label{r1}
&(\gamma(\epsa,v) \gamma(u,\epsb), \, \gamma(u,\epsb)\gamma(\epsa,v)) &&(|u| = |v| = n);\tag{R1} \\
&(\gamma(\epsa,v) \gamma(u,v_{1}), \, \gamma(u,v_{2})\gamma(\epsa,v_{3})) && ( 0 < |v_{1}| < n, |v| = |v_{3}| = n,\tag{R2} \label{r2}\\
& && |v_{1}| = |v_{2}|, vv_{1} = v_{2}v_{3});\nonumber\\
%This is R2 with Gamma inclusion
%&\gamma(\epsa,v) \gamma(u,v_{1})  = \gamma(u,v_{2})\gamma(\epsa,v_{3}) && (|v| = |v_{3}| = n, \%gamma(u,v_{1}),\gamma(u,v_{2}) \in \Gamma, \tag{R2} \label{r2} \\
&(\gamma(u,\epsb)\gamma(u_{1},v) , \,\gamma(u_{2},v)\gamma(u_{3},\epsb)) &&(0 < |u_{1}| < n, |u| = |u_{3}| = n,\tag{R3}\label{r3}\\
& && |u_{1}| = |u_{2}| , uu_{1} = u_{2}u_{3})\nonumber
%This is R3 with Gamma inclusion
%&\gamma(u,\epsb)\gamma(u_{1},v)  = \gamma(u_{2},v)\gamma(u_{3},\epsb) &&(|u| = |u_{3}| = n, \gamma(u_{1},v),\gamma(u_{2},v) \in \Gamma, \tag{R3}\label{r3} \\
%& && 0 < |u_{1}| < n, uu_{1} = u_{2}u_{3}, |u_{1}| = |u_{2}|);\nonumber
%
%These are R4 expanded:
%
%&\gamma(u_{1},v_{1})\gamma(u_{2},v_{2}) = \gamma(u_{1}u_{2},v_{1}v_{2}) && (\gamma(u_{1},v_{1}),\gamma(u_{2},v_{2}),\gamma(u_{1}u_{2},v_{1}v_{2}) \in \Gamma, \tag{R4}\label{r4} \\
%& && 0<|u_{1}|,|u_{2}|,|u_{1}u_{2}|,|v_{1}v_{2}| <n); \nonumber\\
%& \gamma(u_{1},v_{1})\gamma(u_{2},v_{2}) = \gamma(u_{3},v_{1}v_{2})\gamma(u_{4},\epsb) && (\gamma(u_{1},v_{1}),\gamma(u_{2},v_{2}), \gamma(u_{3},v_{1}v_{2}) \in \Gamma \tag{R5}\label{r5}\\
%& && 0 < |u_{1}|,|u_{2}|,|v_{1}v_{2}| < n, \nonumber \\
%& && |u_{4}| = n, |u_{1}u_{2}| > n, u_{1}u_{2} = u_{3}u_{4}); \nonumber \\
%& \gamma(u_{1},v_{1})\gamma(u_{2},v_{2}) = \gamma(u_{1}u_{2},v_{3})\gamma(\epsa,v_{4}) && (\gamma(u_{1},v_{1}),\gamma(u_{2},v_{2}), \gamma(u_{1}u_{2},v_{3}) \in \Gamma \tag{R6}\label{r6}\\
%& && 0 < |u_{1}|,|u_{2}|,|u_{1}u_{2}| < n, \nonumber \\
%& && |v_{4}| = n, |v_{1}v_{2}| > n, v_{1}v_{2} = v_{3}v_{4}); \nonumber \\
%& \gamma(u_{1},v_{1})\gamma(u_{2},v_{2}) = \gamma(u_{3},v_{3})\gamma(u_{4},\epsa)\gamma(\epsa,v_{4}) && (\gamma(u_{1},v_{1}),\gamma(u_{2},v_{2}), \gamma(u_{1}u_{2},v_{3}) \in \Gamma \tag{R6}\label{r6}\\
%& && 0 < |u_{1}|,|u_{2}|,|u_{1}u_{2}| < n, \nonumber \\
%& && |v_{4}| = n, |v_{1}v_{2}| > n, v_{1}v_{2} = v_{3}v_{4}); \nonumber \\
 \end{align}
over $\Gamma$ hold in $\fibpro$.
\begin{proof}
Let $\bar{\pi} : \Gamma \rightarrow \fibpro$ be given by $\bar{\pi}(\gamma(u,v)) = (u,v)$, and let $\pi : \Gamma^{*} \rightarrow \fibpro$ be the unique homomorphism extending $\bar{\pi}$. Then in the case of \eqref{r1}, we have
\[\pi(\gamma(\epsa,v)\gamma(u,\epsb)) =  (\epsa,v)(u,\epsb) = (u,v) = (u,\epsb)(\epsa,v) = \pi(\gamma(u,\epsb)\gamma(\epsa,v)),\] and so \eqref{r1} holds in $\fibpro$. In the case of \eqref{r2}, we have
\begin{align*}\pi(\gamma(\epsa,v)\gamma(u,v_{1})) &= (\epsa,v)(u,v_{1}) \\
&= (u,vv_{1}) \\
&= (u,v_{2}v_{3}) \\
&= (u,v_{2})(\epsa,v_{3})\\ 
&= \pi(\gamma(u,v_{2})\gamma(\epsa,v_{3})),\end{align*}
and hence \eqref{r2} holds in $\fibpro$. In the case of \eqref{r3}, we have
\begin{align*}\pi(\gamma(u,\epsb)\gamma(u_{1},v)) &= (u,\epsb)(u_{1},v) \\
&= (uu_{1},v) \\
&= (u_{2}u_{3},v)\\
&= (u_{2},v)(u_{3},\epsb)\\ 
&= \pi(\gamma(u_{2},v)\gamma(u_{3},\epsb)),\end{align*}
and hence \eqref{r3} holds in $\fibpro$.
\end{proof}
\end{lemma}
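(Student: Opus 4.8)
The plan is to verify directly that under the homomorphism $\pi : \Gamma^{*} \to \fibpro$ the two sides of each relation are sent to the same element of $\fibpro$. Since $\pi$ extends $\bar\pi(\gamma(u,v)) = (u,v)$ homomorphically, a word $\gamma(u_{1},v_{1})\cdots\gamma(u_{k},v_{k})$ over $\Gamma$ is carried to the product $(u_{1},v_{1})\cdots(u_{k},v_{k})$ evaluated in $\fibpro$. The single structural fact I would exploit is that $\fibpro$ sits inside the direct product $A^{*}\times B^{*}$, so its multiplication is coordinatewise, $(s,t)(s',t') = (ss',tt')$. Thus checking that a relation holds amounts to checking, separately in $A^{*}$ and in $B^{*}$, that the concatenations of the first coordinates and of the second coordinates agree.

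First I would dispatch \eqref{r1}. Applying $\pi$ to the left-hand side gives $(\epsa,v)(u,\epsb) = (\epsa u,\, v\epsb) = (u,v)$, while the right-hand side gives $(u,\epsb)(\epsa,v) = (u\epsa,\, \epsb v) = (u,v)$; the two coincide because $\epsa$ and $\epsb$ are identities in their respective free monoids, so the two images commute under $\pi$ regardless of the precise words (only $|u|=|v|=n$ is needed, to guarantee that the two symbols lie in $\Gamma$). For \eqref{r2} I would compute $\pi$ of the left side as $(\epsa,v)(u,v_{1}) = (u,\, vv_{1})$ and $\pi$ of the right side as $(u,v_{2})(\epsa,v_{3}) = (u,\, v_{2}v_{3})$; the first coordinates already match (both equal $u$), and the second coordinates agree precisely by the side hypothesis $vv_{1} = v_{2}v_{3}$. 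Relation \eqref{r3} is the mirror image with the roles of the two coordinates interchanged, and the identical computation using $uu_{1} = u_{2}u_{3}$ in place of $vv_{1}=v_{2}v_{3}$ finishes it.

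There is no genuine obstacle here: the whole content is a coordinatewise concatenation check, engineered to succeed by the word equations $vv_{1}=v_{2}v_{3}$ and $uu_{1}=u_{2}u_{3}$ built into \eqref{r2} and \eqref{r3}. The only point I would pause over is bookkeeping, namely confirming that each symbol $\gamma(\cdot,\cdot)$ occurring in \eqref{r1}--\eqref{r3} really is an element of $\Gamma$, so that $\pi$ is defined on it; for example $\gamma(\epsa,v)$ with $|v|=n$ satisfies $p\cdot 0 \equiv q\cdot n \equiv 0 \pmod n$ and avoids the excluded cases $|u|=|v|\in\{0,n\}$, and the remaining symbols are handled the same way. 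I would also note in passing that the length condition $|v_{1}|=|v_{2}|$ recorded in \eqref{r2} (and $|u_{1}|=|u_{2}|$ in \eqref{r3}) is in fact forced by the corresponding word equation together with $|v|=|v_{3}|=n$, so it is present only to make membership in $\Gamma$ transparent. Once these checks are in place, the lemma follows from the three coordinatewise computations above.
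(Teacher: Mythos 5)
Your proposal is correct and follows essentially the same route as the paper: define the homomorphism $\pi : \Gamma^{*} \rightarrow \fibpro$ extending $\gamma(u,v) \mapsto (u,v)$ and verify each of \eqref{r1}--\eqref{r3} by a coordinatewise computation, with \eqref{r2} and \eqref{r3} reducing to the word equations $vv_{1} = v_{2}v_{3}$ and $uu_{1} = u_{2}u_{3}$ respectively. Your additional bookkeeping --- checking that each symbol $\gamma(\cdot,\cdot)$ lies in $\Gamma$, and observing that $|v_{1}| = |v_{2}|$ is forced by $vv_{1} = v_{2}v_{3}$ together with $|v| = |v_{3}| = n$ --- is sound and slightly more careful than the paper's own write-up, but does not change the argument.
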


\begin{lemma}\thlabel{RelLem2}
Let $\fibpro$ and $\Gamma$ be as in \thref{RelLem1}. Let $\gamma(u_{1},v_{1}),\gamma(u_{2},v_{2}) \in \Gamma$ with \linebreak $0<|u_{1}|,|u_{2}| < n$, and define $u_{3},u_{4} \in A^{*}$, $v_{3},v_{4} \in B^{*}$ as follows:
\[\begin{aligned}
&u_{1}u_{2} = u_{3}u_{4}, && |u_{4}|= n && \textup{ if } |u_{1}u_{2}| > n, \\
&v_{1}v_{2} = v_{3}v_{4}, && |v_{4}|= n && \textup{ if } |v_{1}v_{2}| > n.
\end{aligned}\]

Then the relation \[(\gamma(u_{1},v_{1})\gamma(u_{2},v_{2}),\,w) \tag{R4}\label{r4}\]
where
\[w = \begin{cases} \gamma(u_{1}u_{2},v_{1}v_{2}) & \textup{if}\ |u_{1}u_{2}|, |v_{1}v_{2}| < n,\\
\gamma(u_{3},v_{1}v_{2})\gamma(u_{4},\epsb) & \textup{if}\ |u_{1}u_{2}| > n, |v_{1}v_{2}| < n,\\
\gamma(u_{1}u_{2},v_{3})\gamma(\epsa,v_{4}) & \textup{if}\ |u_{1}u_{2}| < n, |v_{1}v_{2}| > n,\\
\gamma(u_{3},v_{3})\gamma(u_{4},\epsb)\gamma(\epsa,v_{4}) & \textup{if}\ |u_{1}u_{2}|, |v_{1}v_{2}| > n, \\
\gamma(u_{1}u_{2},\epsb)\gamma(\epsa,v_{1}v_{2}) & \textup{if}\ |u_{1}u_{2}| =  |v_{1}v_{2}| = n \\
\end{cases}\]
\noindent over $\Gamma$ holds in $\fibpro$.
\begin{proof}
Let $\bar{\pi} : \Gamma \rightarrow \fibpro$ be given by $\bar{\pi}(\gamma(u,v)) = (u,v)$, and let \linebreak $\pi : \Gamma^{*} \rightarrow \fibpro$ be the unique homomorphism extending $\bar{\pi}$. For $\gamma(u_{1},v_{1}),\gamma(u_{2},v_{2}) \in \Gamma$ with $0 < |u_{1}|,|u_{2}| < n$, if $|u_{1}u_{1}|, |v_{1}v_{2}| < n$, then
\[\pi(\gamma(u_{1},v_{1})\gamma(u_{2},v_{2})) = (u_{1},v_{1})(u_{2},v_{2}) = (u_{1}u_{2},v_{1}v_{2}) = \pi(\gamma(u_{1}u_{2},v_{1}v_{2})).\]
If $|u_{1}u_{2}| > n$, $|v_{1}v_{2}|<n$, then
\begin{align*}\pi(\gamma(u_{1},v_{1})\gamma(u_{2},v_{2})) &= (u_{1},v_{1})(u_{2},v_{2}) \\
&= (u_{1}u_{2},v_{1}v_{2}) \\
&= (u_{3}u_{4}, v_{1}v_{2}) \\
&= (u_{3},v_{1}v_{2})(u_{4},\epsb) \\
&= \pi(\gamma(u_{3},v_{1}v_{2})\gamma(u_{4},\epsb)).\end{align*}

If $|u_{1}u_{2}|<n, |v_{1}v_{2}|>n$, then 
\begin{align*}\pi(\gamma(u_{1},v_{1})\gamma(u_{2},v_{2})) &= (u_{1},v_{1})(u_{2},v_{2}) \\
&= (u_{1}u_{2},v_{1}v_{2}) \\
&= (u_{1}u_{2},v_{3}v_{4}) \\
&= (u_{1}u_{2},v_{3})(\epsa,v_{4}) \\
&= \pi(\gamma(u_{1}u_{2},v_{3})\gamma(\epsa,v_{4})).\end{align*}

If $|u_{1}u_{2}|, |v_{1}v_{2}| > n$, then
\begin{align*}\pi(\gamma(u_{1},v_{1})\gamma(u_{2},v_{2})) & = (u_{1},v_{1})(u_{2},v_{2}) \\
& = (u_{1}u_{2},v_{1}v_{2}) \\
& = (u_{3}u_{4},v_{3}v_{4}) \\
& = (u_{3},v_{3})(u_{4},\epsb)(\epsa, v_{4}) \\
& = \pi(\gamma(u_{3},v_{3})\gamma(u_{4},\epsb)\gamma(\epsa, v_{4})).\end{align*}

Finally if $|u_{1}u_{2}| = |v_{1}v_{2}| = n$, then
\begin{align*}\pi(\gamma(u_{1},v_{1})\gamma(u_{2},v_{2})) & = (u_{1},v_{1})(u_{2},v_{2}) \\
& = (u_{1}u_{2},v_{1}v_{2}) \\
& = (u_{1}u_{2},\epsb)(\epsa, v_{1}v_{2}) \\
& = \pi(\gamma(u_{1}u_{2},\epsb)\gamma(\epsa, v_{1}v_{2})).\end{align*}

Noting that $|u_{1}u_{2}| = n \Leftrightarrow |v_{1}v_{2}| = n$, then we have all possible cases. Hence \eqref{r4} holds in $\fibpro$ as claimed.

\end{proof}

 \end{lemma}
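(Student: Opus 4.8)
The plan is to verify \eqref{r4} by a direct computation with the canonical homomorphism, exactly as the layout of \thref{RelLem2} invites. I would reuse the map $\bar{\pi} : \Gamma \rightarrow \fibpro$, $\gamma(u,v) \mapsto (u,v)$, together with its unique extension $\pi : \Gamma^{*} \rightarrow \fibpro$, as in the proof of \thref{RelLem1}. A relation $(\ell, r)$ holds in $\fibpro$ precisely when $\pi(\ell) = \pi(r)$, so the entire content of the lemma is to show that, in each of the five displayed branches, the word $w$ satisfies $\pi(w) = \pi(\gamma(u_{1},v_{1})\gamma(u_{2},v_{2}))$. The left-hand image is always $\pi(\gamma(u_{1},v_{1})\gamma(u_{2},v_{2})) = (u_{1},v_{1})(u_{2},v_{2}) = (u_{1}u_{2}, v_{1}v_{2})$ by coordinatewise concatenation in $A^{*} \times B^{*}$, so in every branch it remains only to check that $\pi(w)$ equals this same pair.

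Before that, two small points of bookkeeping should be settled. First, for $\pi(w)$ even to be defined, each symbol $\gamma(\cdot,\cdot)$ appearing in $w$ must genuinely lie in $\Gamma$, and I would confirm this from the defining conditions of $\Gamma$. The length bounds are immediate from $0 < |u_{1}|,|u_{2}| < n$ together with the splits $u_{1}u_{2} = u_{3}u_{4}$, $|u_{4}| = n$ (forcing $0 < |u_{3}| < n$ when $|u_{1}u_{2}| > n$) and its analogue for $v$; symbols of the form $\gamma(x,\epsb)$ or $\gamma(\epsa,y)$ with $|x| = n$ or $|y| = n$ satisfy the congruence trivially since $pn \equiv 0 \equiv q\cdot 0 \pmod{n}$. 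The congruences such as $p|u_{3}| \equiv q|v_{1}v_{2}| \pmod{n}$ follow by adding the congruences $p|u_{i}| \equiv q|v_{i}|$ inherited from $\gamma(u_{i},v_{i}) \in \Gamma$ and discarding the stray multiples of $n$. Second, I would verify that the five branches are exhaustive. Here I would use $\gcd(p,n) = \gcd(q,n) = 1$: since $0 < |u_{i}| < n$ forces $p|u_{i}| \not\equiv 0 \pmod{n}$, the inherited congruence forces $q|v_{i}| \not\equiv 0$ and hence $0 < |v_{i}| < n$ as well, so both $|u_{1}u_{2}|$ and $|v_{1}v_{2}|$ lie strictly between $0$ and $2n$. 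In that range a length is $\equiv 0 \pmod{n}$ iff it equals $n$, and the congruence $p|u_{1}u_{2}| \equiv q|v_{1}v_{2}|$ with coprimality gives the key equivalence $|u_{1}u_{2}| = n \Leftrightarrow |v_{1}v_{2}| = n$. This is precisely the remark that rules out the mixed combinations $(=n, \ne n)$ and leaves only the listed five.

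With this in hand the per-branch computation is routine concatenation arithmetic. I would run through each case, substituting $u_{1}u_{2} = u_{3}u_{4}$ and $v_{1}v_{2} = v_{3}v_{4}$ where relevant and splitting the single pair $(u_{1}u_{2}, v_{1}v_{2})$ into the product prescribed by $w$; for example, in the case $|u_{1}u_{2}|, |v_{1}v_{2}| > n$ one has $\pi(w) = (u_{3},v_{3})(u_{4},\epsb)(\epsa,v_{4}) = (u_{3}u_{4}, v_{3}v_{4}) = (u_{1}u_{2}, v_{1}v_{2})$, and the other four branches are entirely analogous.

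I do not anticipate a genuine obstacle. The only parts requiring any thought are the two bookkeeping checks above, namely membership of each factor in $\Gamma$ and exhaustiveness of the case split, both of which are short consequences of the coprimality hypothesis and the constraint $0 < |u_{1}|,|u_{2}| < n$. The remainder is a mechanical verification that coordinatewise concatenation in $\fibpro$ is compatible with the chosen factorizations of $u_{1}u_{2}$ and $v_{1}v_{2}$.
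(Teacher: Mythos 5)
Your proposal is correct and follows essentially the same route as the paper's own proof: both verify \eqref{r4} case by case via the homomorphism $\pi : \Gamma^{*} \rightarrow \fibpro$, reducing each branch to coordinatewise concatenation. Your explicit checks that each factor of $w$ lies in $\Gamma$ and that coprimality yields $|u_{1}u_{2}| = n \Leftrightarrow |v_{1}v_{2}| = n$ (hence exhaustiveness of the five branches) are bookkeeping the paper leaves largely implicit, and they only strengthen the argument.
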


We now use \thref{RelLem1} and \thref{RelLem2} alongside \thref{FreeMonIff} to prove \thref{fingenifffinpres}.

\begin{proof}[Proof of \thref{fingenifffinpres}]
By \thref{FreeMonIff}, if $\fibpro$ is finitely generated then \linebreak $|A|,|B| < \infty$, $F = \text{Gp}\langle x : x^{n} = 1 \rangle$ for some $n \in \N$, and $\varphi(A) = \{x^{p}\}$, $\psi(B) = \{x^{q}\}$ for some $1 \leq p,q \leq n$ with $\textup{gcd}(p,n) = \textup{gcd}(q,n) = 1$. Let \[\bar{\Gamma} := \{ \gamma{(u,v)} : u \in A^{*}, v \in B^{*}, \,p|u| \equiv q|v|\,\textup{mod}\,n, \,0 \leq |u|,|v| \leq n\}\]
and
\[\Gamma := \bar{\Gamma}\setminus \{\gamma(u,v) : u \in A^{*}, v \in A^{*}, |u| = |v| = n \textup{ or } |u| = |v| = 0\}\] be sets of formal symbols. Let $R$ be the set of relations on $\Gamma$ given by \eqref{r1}-\eqref{r4} in \thref{RelLem1} and \thref{RelLem2}. Let $\bar{\pi} : \Gamma \rightarrow \fibpro$ be given by $\bar{\pi}(\gamma(u,v)) = (u,v)$, and let $\pi : \Gamma^{*} \rightarrow \fibpro$ be the unique homomorphism extending $\bar{\pi}$. We will show that $\fibpro \cong \textup{Mon}\langle\Gamma : R\rangle$, by showing $\textup{ker}\,\pi = R^{\sharp}$.

By \thref{RelLem1} and \thref{RelLem2}, it follows that $\textup{ker}\,\pi \supseteq R$, and hence $\textup{ker}\,\pi \supseteq R^{\sharp}$.

To show that $\textup{ker}\,\pi \subseteq R^{\sharp}$, we make the following claims:

\textit{Claim 1:} For all $w \in \Gamma^{*}$, \begin{align}(w, w_{1}w_{2}w_{3}) \in R^{\sharp} \text{ for some } & w_{1} \in \{\gamma(u,v) \in \Gamma : 0< |u|,|v| < n\}\cup\{\varepsilon_{\Gamma^{*}}\},\nonumber\\
& w_{2} \in \{\gamma(u,v) \in \Gamma  : |u| = n\}^{*},\label{threewords}\\
& w_{3} \in  \{\gamma(u,v) \in \Gamma : |v| = n\}^{*}\nonumber.\end{align} We briefly adopt some terminology for letters in $\Gamma$ to this end. We will say that \linebreak $\gamma(u,v) \in \Gamma$ is of $\varepsilon$-type if either $u = \epsa$ or $v = \epsb$. Otherwise, $\gamma(u,v)$ will be of $\phi$-type. The following rewriting procedure proves the claim:

(A1) If a letter of $\phi$-type in $w$ is preceeded by a letter of $\varepsilon$-type, then \eqref{r2} and \eqref{r3} allows us to replace them with a letter of $\phi$-type proceeded by a letter of $\varepsilon$-type. Hence using a sequence of \eqref{r2} and \eqref{r3} allows us to rewrite $w$ as  $w'w''$, where $w'$ is a (possibly empty) word consisting of $\phi$-type letters, and $w''$ is a (possibly empty) word of $\varepsilon$-type. That is, $(w,w'w'') \in R^{\sharp}$ for some $w' \in \{\gamma(u,v) \in \Gamma : 0 <|u|,|v|<n\}^{*}$,\linebreak $w'' \in \{\gamma(u,v) \in \Gamma : u = \epsa \text{ or } v = \epsb\}^{*}$.\\
(A2)  As \eqref{r4} allows us to replace two concurrent $\phi$-type letters with a single $\phi$-type letter followed by at most two $\varepsilon$-type letters, then repeatedly using \eqref{r4} from right to left on the letters in $w'$ allows us to rewrite $w'w''$ as $w_{1}w'''$ for $w_{1}$ a $\phi$-type letter (or $\varepsilon_{\Gamma^{*}}$), and $w'''$ a (potentially empty) word consisting of $\varepsilon$-type letters. That is, $(w'w'',w_{1}w''') \in R^{\sharp}$ for some $w_{1}' \in \{\gamma(u,v) \in \Gamma : 0 <|u|,|v|<n\}\cup\{\varepsilon_{\Gamma^{*}}\}$, $w''' \in \{\gamma(u,v) \in \Gamma : u = \epsa \text{ or } v = \epsb\}^{*}$.\\
(A3) As \eqref{r1} allows us to swap the order of any two concurrent $\varepsilon$-type letters, then repeatedly using \eqref{r1} on the letters of $w'''$ allows us to rewrite $w'''$ as $w_{2}w_{3}$, where $w_{2} \in \{\gamma(u,v) \in \Gamma : |u| = n\}$, $w_{3} \in \{\gamma(u,v) \in \Gamma : |v| = n\}$. That is $(w_{1}w''',w_{1}w_{2}w_{3}) \in R^{\sharp}$ where $w_{1},w_{2},w_{3}$ are as in \eqref{threewords}.

 Hence $(w,w_{1}w_{2}w_{3}) \in R^{\sharp}$ as claimed.

\textit{Claim 2:} If $(w,w') \in \textup{ker}\,\pi$, then
 \begin{align*}(w,w_{1}w_{2}w_{3}),(w',w'_{1}w'_{2}w'_{3}) \in R^{\sharp} \text{ for some } & w_{1},w'_{1} \in \{\gamma(u,v) \in \Gamma : 0< |u|,|v| < n\}\cup\{\varepsilon_{\Gamma^{*}}\},\\
& w_{2},w'_{2} \in \{\gamma(u,v) \in \Gamma  : |u| = n\}^{*},\\
& w_{3},w'_{3} \in  \{\gamma(u,v) \in \Gamma : |v| = n\}^{*}\end{align*}
with $w_{1} = w'_{1}, w_{2} = w'_{2}, w_{3} = w'_{3}$.

The fact that $(w,w_{1} w_{2} w_{3}), (w',w'_{1} w'_{2} w'_{3}) \in R^{\sharp}$ for such $w_{1},w_{2},w_{3},w'_{1},w'_{2},w'_{3}$ follows from Claim 1. As $R^{\sharp} \subseteq \textup{ker}\,\pi$, then $(w,w_{1} w_{2}w_{3}),(w', w'_{1} w'_{2} w'_{3}) \in \text{ker}\,\pi$ also.

Firstly, consider for a contradiction that $w_{1} \not = w'_{1}$. Then as \linebreak $w_{1}, w'_{1} \in \{\gamma(u,v) \in \Gamma : 0< |u|,|v| < n\}\cup\{\varepsilon_{\Gamma^{*}}\}$, then $w_{1} = \gamma(u,v), w'_{1} = \gamma(u',v')$, where either $u \not = u'$ or $v \not = v'$. 

If $u \not = u'$, then if $|u| = |u'|$, it follows that $\pi(w) \not = \pi(w')$, as $\pi(w_{1}) = (u,v)$ and $\pi(w'_{1}) = (u',v')$ are non-equal prefixes of $\pi(w)$ and $\pi(w')$ respectively, contradicting $(w,w') \in \textup{ker}\,\pi$.

 Otherwise, if $|u| \not = |u'|$, then $\pi(w) \not = \pi(w')$, as the first coordinates in $\pi(w)$ and $\pi(w')$ are free words over $A$, whose lengths are congruent to $|u|$ and $|u'|$ modulo n, respectively.

The argument for the case where $v \not = v'$ is the same as for $u \not = u'$. Hence $\pi(w) \not = \pi(w')$ if $w_{1} \not = w'_{1}$, and so it must be that $w_{1}=w'_{1}$ to avoid contradiction.

Secondly, consider for a contradiction that $w_{2} \not = w'_{2}$. Then as \linebreak $w_{2},w'_{2} \in \{\gamma(u,v) \in \Gamma : |u| \in n\}^{*}$, it follows that $\pi(w_{2}) = (u,\epsb)$, $\pi(w'_{2}) = (u',\epsb)$ for some $u,u' \in A^{*}$ with $u \not = u'$. As $u$ and $u'$ are suffixes of the first coordinates (being free words over $A$) of $\pi(w)$ and $\pi(w')$ respectively, then it follows that $\pi(w) \not = \pi(w')$, as their first coordinates either have different suffixes in the case where $|u| = |u'|$, or are different in length in the case where $|u| \not = |u'|$. Hence it must be that $w_{2} = w'_{2}$ to avoid contradiction.

Finally, consider for a contradiction that $w_{3} \not = w'_{3}$. This is dual to the $w_{2} \not = w'_{2}$ consideration, as applying the same argument to the second coordinates of $\pi(w)$ and $\pi(w')$ will contradict $(w,w') \in \textup{ker}\,\pi$. Hence it must be that $w_{3} = w'_{3}$. This ends the proof of Claim 2.

By transitivity, it follows from Claim 2 that if $(w,w') \in \textup{ker}\,\pi$, then $(w,w') \in R^{\sharp}$ also, and hence $\textup{ker}\,\pi \subseteq R^{\sharp}$. As we have now shown $\textup{ker}\,\pi = R^{\sharp}$, then \[\fibpro \cong \Gamma^{*}/\text{ker}\,\pi = \Gamma^{*}/R^{\sharp} = \text{Mon}\langle \Gamma : R \rangle,\] and thus $\fibpro$ is finitely presented as required.
\end{proof}

\section{Infinite fiber quotients}\label{sec3}
From the results of the above section, one might expect that infinite fiber quotients would give rise to many more finitely generated fiber products of free semigroups and free monoids. We thus seek to classify some properties of fiber quotients which give finitely generated fiber products. In this section, we obtain some necessary semigroup theoretic conditions for finite generation in the general infinite fiber quotient case. We begin with observation that the quotient itself must at least be finitely generated.

\begin{lemma}\thlabel{fgcompsandfiberlemma}
Let $S,T,F$ be semigroups, and let $\varphi : S \rightarrow F$, $\psi : T \rightarrow F$ be epimorphisms. If $\fibpro$ is finitely generated, then $S$, $T$ and $F$ are finitely generated.
\begin{proof}
Suppose that $\fibpro$ is finitely generated, and let $$V := \{(s_{i},t_{i}) : 1 \leq i \leq n\} \subseteq \fibpro$$ be a generating set for $\fibpro$. Then $\pi_{1}(V) = \{s_{i} : 1 \leq i \leq n\}$ generates $S$ (as $\fibpro$ is subdirect), $\pi_{2}(V) = \{t_{i} : 1 \leq i \leq n\}$ generates $T$, and as $\varphi$ is a surjection, it follows that $\varphi(\pi_{1}(V))$ is a generating set for $F$.\end{proof}
\end{lemma}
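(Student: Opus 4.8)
The plan is to prove that finite generation of the fiber product $\fibpro$ forces finite generation of all three of $S$, $T$, and $F$. The statement is essentially a structural observation, and the natural strategy is to take a finite generating set for $\fibpro$ and push it forward through the projection maps $\pi_1, \pi_2$ and the epimorphisms $\varphi, \psi$, exploiting the fact that fiber products are subdirect products. The key facts I would use are: (i) that the projections $\pi_1 : \fibpro \to S$ and $\pi_2 : \fibpro \to T$ are surjective, which is exactly the defining property of a subdirect product recorded in the Preliminaries; and (ii) that the image of a generating set under a surjective homomorphism is a generating set for the codomain.

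First I would fix a finite generating set $V = \{(s_i,t_i) : 1 \leq i \leq n\}$ for $\fibpro$. Since the projection $\pi_1$ is a surjective homomorphism and $V$ generates the domain, the image $\pi_1(V) = \{s_i : 1 \leq i \leq n\}$ generates $S$; this is because every element of $S$ is $\pi_1$ of some product of elements of $V$, and $\pi_1$ carries such a product to the corresponding product of the $s_i$. The same reasoning applied to $\pi_2$ shows $\pi_2(V) = \{t_i : 1 \leq i \leq n\}$ generates $T$. Both images are finite (of size at most $n$), so $S$ and $T$ are finitely generated.

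For the fiber quotient $F$, I would compose: $\varphi \circ \pi_1$ is a homomorphism from $\fibpro$ onto $F$, being a composite of the homomorphism $\pi_1$ with the epimorphism $\varphi$, and it is surjective because $\varphi$ is surjective and $\pi_1$ is surjective. Hence $\varphi(\pi_1(V))$ is a finite generating set for $F$, giving finite generation of $F$. (One could equally use $\psi \circ \pi_2$; since $(s_i,t_i)$ lies in the fiber product we have $\varphi(s_i) = \psi(t_i)$, so the two constructions agree, but either alone suffices.)

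There is no real obstacle here, as the argument is a direct application of the two standard facts above. The only point requiring the tiniest care is the verification that a homomorphic image of a generating set generates the image semigroup, which follows immediately since homomorphisms preserve products; and the observation that each pushed-forward set is finite because it is the image of the finite set $V$. I would therefore present the proof essentially as a short chain of surjectivity observations rather than an involved computation.
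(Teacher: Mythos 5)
Your proposal is correct and follows exactly the same approach as the paper: project the finite generating set $V$ onto $S$ and $T$ via the surjective projections (subdirectness), and push $\pi_{1}(V)$ through $\varphi$ to obtain a finite generating set for $F$. The extra details you supply (homomorphic images of generating sets generate the image) are routine and only make explicit what the paper leaves implicit.
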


We also note in the next two results that finite generation of a fiber product of two free semigroups/monoids is equivalent to the fiber product having finitely many indecomposable elements.

\begin{lemma}\thlabel{FGiffIndecompFin}
Let $F$ be a semigroup, and let $\varphi : A^{+} \rightarrow F$, $\psi : B^{+} \rightarrow F$ be two epimorphisms with $A,B$ finite alphabets. Then $\fibpro$ is finitely generated if and only if $\fibpro$ has finitely many indecomposable elements.

\begin{proof}
As every generating set for $\fibpro$ contains the set of indecomposable elements, sufficiency is immediate.

For necessity, we show that the set of indecomposable elements generates $\fibpro$. Let $(u,v) \in \fibpro$. If $(u,v)$ is indecomposable, then there is nothing to show. Otherwise, if $(u,v) \in \fibpro^{2}$, then there exist $(u',v'),(u'',v'') \in \fibpro$ with \linebreak $(u,v) = (u',v')(u'',v'')$. As $u \in A^{+}$, $v \in B^{+}$, it follows that $|u'|,|u''| < |u|$ and $|v'|, |v''| < |v|$. 

Repeating this factoring process on $(u',v')$ or $(u'',v'')$ if either are decomposable, and so on with their decomposable factors, then as the lengths of the words in the factors of $(u,v)$ decrease in every factorisation, it follows that this process is finite. Hence
\[ (u,v) = (u_{1},v_{1})(u_{2},v_{2})\dots (u_{n},v_{n})\]
where $(u_{i},v_{i})$ are indecomposable for $1 \leq i \leq n$, and $n \leq \text{max}\{|u|,|v|\}$. Thus $(u,v)$ is generated by indecomposable elements and thus $\fibpro$ is finitely generated, and so the result follows.
\end{proof}
\end{lemma}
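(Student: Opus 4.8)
The plan is to show that the set $I$ of indecomposable elements of $\fibpro$ is simultaneously contained in every generating set and is itself a generating set; the equivalence then drops out immediately. For the forward implication, I would suppose $\fibpro = \langle X \rangle$ with $X$ finite. If $(u,v) \in \fibpro$ is indecomposable, then by definition it cannot be written as a product of two elements of $\fibpro$, and in particular not as a product of two or more elements of $X$, since any such product lies in $\fibpro^{2}$. Hence $(u,v)$ must itself belong to $X$, giving $I \subseteq X$; finiteness of $X$ then forces $I$ to be finite. This handles one direction with essentially no computation.

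For the converse, I would show that $I$ generates $\fibpro$, so that finiteness of $I$ produces a finite generating set. The natural argument is induction on the total length $|u| + |v|$ of a pair $(u,v) \in \fibpro \subseteq A^{+} \times B^{+}$. If $(u,v)$ is indecomposable there is nothing to prove; otherwise write $(u,v) = (u',v')(u'',v'')$ with both factors in $\fibpro$. Here I would invoke that $A^{+}$ and $B^{+}$ are \emph{free}: concatenation adds lengths, so $|u'|,|u''| \geq 1$ with $|u'| + |u''| = |u|$ (and likewise in the second coordinate), whence each factor has strictly smaller total length. The inductive hypothesis applies to both factors, and splicing the resulting decompositions expresses $(u,v)$ as a product of indecomposables.

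The one point that needs care is the termination of this factoring process, and this is precisely where freeness is indispensable: in an arbitrary semigroup an element may admit arbitrarily long or non-terminating factorisations — idempotents being the obvious obstruction — but over a free semigroup the coordinate lengths supply a strictly decreasing $\N$-valued measure, so well-founded induction closes the argument. I would phrase the base case as the pairs with $|u| = |v| = 1$, which are forced to be indecomposable, and remark that since every indecomposable factor has both coordinates nonempty, the number of factors in the final product is bounded (by $\min\{|u|,|v|\}$), confirming that the decomposition is genuinely finite rather than merely descending.
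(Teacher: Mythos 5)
Your proposal is correct and follows essentially the same route as the paper's proof: the forward direction observes that every generating set must contain the indecomposables, and the converse shows the indecomposables generate $\fibpro$ via length-decreasing factorisations in $A^{+}\times B^{+}$, with your explicit induction on $|u|+|v|$ being just a cleaner formalisation of the paper's repeated-factoring argument. The only (harmless) difference is your sharper bound of $\min\{|u|,|v|\}$ on the number of factors, where the paper states $\max\{|u|,|v|\}$.
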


\begin{lemma}\thlabel{FGMiffIndecompFin}
Let $F$ be a monoid, and let $\varphi : A^{*} \rightarrow F$, $\psi : B^{*} \rightarrow F$ be two epimorphisms with $A,B$ finite alphabets. Then $\fibpro$ is finitely generated if and only if $\fibpro$ has finitely many indecomposable elements.
\begin{proof}
The proof is the same as in \thref{FGiffIndecompFin}, noting that any decomposition of \linebreak $(u,v) = (u',v')(u'',v'')$ implies either $|u'|,|u''| < |u|$ or $|v'|,|v''| < |v|$.
%As every monoid generating set for $M$ contains $M\setminus (M\setminus\{1_{M}\})^{2}$, sufficiency is immediate.
%
%For necessity, we show that $M\setminus (M\setminus\{1_{M}\})^{2}$ generates $M$. Let $(u,v) \in M$. If $(u,v) \in M \setminus (M\setminus \{1_{M}\})^{2}$, then clearly $(u,v) \in \langle M \setminus (M \setminus\{1_{M}\})^{2} \rangle$. Otherwise, if $(u,v) \in (M\setminus\{1_{M}\})^{2}$, then there exist $(u',v'),(u'',v'') \in M\setminus\{1_{M}\}$ with $(u,v) = (u',v')(u'',v'')$. As $u \in A^{*}$, $v \in B^{*}$ with either $|u| > 0$ or $|v| > 0$, it follows that either $|u'|,|u''| < |u|$ or $|v'|, |v''| < |v|$. 
%
%Repeating this factoring process on $(u',v')$ or $(u'',v'')$ if either are decomposable, and so on with their decomposable factors, then as the lengths of the words in the factors of $(u,v)$ decrease in every factorisation, it follows that this process is finite. Hence
%\[ (u,v) = (u_{1},v_{1})(u_{2},v_{2})\dots (u_{n},v_{n})\]
%where $(u_{i},v_{i}) \in M\setminus (M\setminus\{1_{M}\})^{2}$ for $1 \leq i \leq n$, and $n \leq \text{max}\{|u|,|v|\}$. Thus \linebreak $(u,v) \in \langle M\setminus (M\setminus\{1_{M}\})^{2} \rangle$ and thus $M$ is finitely generated, and so the result follows.
\end{proof}
\end{lemma}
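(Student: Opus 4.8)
The plan is to mirror the proof of \thref{FGiffIndecompFin}, the only genuine change being the well-founded measure that drives the induction. I would establish the two implications separately. For the forward direction (finite generation implies finitely many indecomposables), I would show that every non-identity indecomposable element lies in any generating set: expressing such an element as a product of generators and deleting all identity factors leaves a product of non-identity elements of $\fibpro$, and if two or more such factors remained, grouping them into two non-identity elements (a nonempty product of non-identity elements of $\fibpro$ is again non-identity, since word length is additive) would exhibit the element as decomposable. Hence each non-identity indecomposable is itself a generator, so a finite generating set can contain only finitely many indecomposables.

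For the converse I would show that the set of indecomposable elements generates $\fibpro$, so that finiteness of that set gives finite generation. The key adjustment from the free semigroup case is that a non-trivial decomposition $(u,v) = (u',v')(u'',v'')$ in the monoid fiber product need not strictly shorten both coordinates at once: one factor may be of the form $(\epsa,v')$ or $(u'',\epsb)$. I would therefore induct on the combined length $|u|+|v|$ rather than on the two coordinate lengths separately.

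Concretely, given a decomposable $(u,v)\in\fibpro$, write $(u,v) = (u',v')(u'',v'')$ with both factors distinct from the identity $(\epsa,\epsb)$. Since $|u'|+|u''| = |u|$ and $|v'|+|v''| = |v|$, the two factor-lengths $|u'|+|v'|$ and $|u''|+|v''|$ sum to $|u|+|v|$, and each is at least $1$ because neither factor is the identity; hence each factor has strictly smaller combined length than $(u,v)$. Applying the induction hypothesis to each factor writes it as a product of indecomposables, and concatenating yields such an expression for $(u,v)$. The base case, where $(u,v)$ is already indecomposable or has small combined length, is immediate.

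The main obstacle, and really the only subtlety, is precisely this mismatch with the free semigroup situation: because a factor can be trivial in one coordinate, no single coordinate length is guaranteed to decrease, and a naive reuse of the \thref{FGiffIndecompFin} induction would stall on elements factoring as $(\epsa,v)(u,\epsb)$. Switching to the combined-length measure $|u|+|v|$ repairs this cleanly, since non-triviality of each factor forces a strict drop. I would make a point of stating explicitly that each factor is non-identity (combined length $\geq 1$), as this is the one place where the monoid argument departs substantively from its semigroup analogue.
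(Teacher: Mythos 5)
Your proposal is correct and follows the same overall route as the paper: both directions mirror \thref{FGiffIndecompFin}, with the only substantive work being the adjustment of the induction measure to the monoid setting. In fact, your write-up is more careful than the paper's at exactly the two points where the monoid case genuinely differs from the semigroup case. First, the paper justifies termination with the single remark that any non-trivial decomposition $(u,v)=(u',v')(u'',v'')$ forces either $|u'|,|u''|<|u|$ or $|v'|,|v''|<|v|$; read literally, this dichotomy fails for factorizations of the form $(u,v)=(\epsa,v)(u,\epsb)$, where $|u''|=|u|$ and $|v'|=|v|$ --- precisely the case you flag as the one on which a naive reuse of the semigroup induction would stall. (The charitable per-factor reading --- each factor is componentwise no longer than $(u,v)$ and strictly shorter in at least one coordinate --- does give a valid well-founded descent; your combined-length measure $|u|+|v|$ is just a clean linearization of it, and your observation that non-identity of each factor forces a strict drop is the key point either way.) Second, in the forward direction you explicitly handle identity factors appearing when an indecomposable element is written as a product of generators, deleting them and regrouping what remains; in the semigroup case the paper can simply assert that every generating set contains all indecomposables, but in the monoid setting that assertion needs exactly the deletion-and-grouping argument you supply. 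So: same approach, with your version filling in the details that the paper's one-line proof leaves implicit (and stating the descent measure correctly).
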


Our next result shows that no finitely generated fiber product of two free semigroups can have a fiber quotient containing a finite subsemigroup.

\begin{propn}\thlabel{torsionfree}
Let $\varphi : A^{+} \rightarrow S$, $\psi : B^{+} \rightarrow S$ be two epimorphisms onto a semigroup fiber $S$. If the fiber product of $A^{+}$ with $B^{+}$ over $S$ with respect to $\varphi, \psi$ is finitely generated, then $S$ is idempotent-free. 

\begin{proof}
Suppose to the contrary, that $e^{2} = e$ for some $e \in S$. Then by surjectivity, there exists $u \in A^{+}, v \in B^{+}$ such that $\varphi(u) = e = \psi(v)$. Then for all $n \in N$, as $\varphi(v^{n}) = e^{n} = e$, it follows that $(u,v^{n}) \in \fibpro$.

 If $X = \{(u_{i},v_{i}) : 1 \leq i \leq p\}  \subseteq A^{+} \times B^{+}$ were a finite generating set for $\fibpro$, then as $u$ can be decomposed into at most $|u|$ factors in $A^{+}$, it follows that each pair $(u,v^{n})$ can be decomposed into at most $|u|$ factors in $\langle X \rangle$. This is a contradiction, as this implies that $|v^{n}| \leq |u|\max_{1 \leq i \leq p}|v_{i}|$ for all $n \in \mathbb{N}$.
\end{proof}
\end{propn}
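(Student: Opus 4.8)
The plan is to argue by contradiction, in exactly the spirit of \thref{FreeSemiFin}: assume $S$ contains an idempotent and manufacture an infinite family of elements of the fiber product whose second coordinates grow without bound, yet which are forced to factor into a bounded number of generators.

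First I would suppose $E(S) \neq \emptyset$, so that $e^{2} = e$ for some $e \in S$. Since both $\varphi$ and $\psi$ are surjective, I can choose preimages $u \in A^{+}$ with $\varphi(u) = e$ and $v \in B^{+}$ with $\psi(v) = e$. Idempotency then gives $\psi(v^{n}) = e^{n} = e = \varphi(u)$ for every $n \in \mathbb{N}$, so the whole family $\{(u,v^{n}) : n \in \mathbb{N}\}$ lies in $\fibpro$. The point to record is that $u$ is a single fixed word, whereas the second coordinates $v^{n}$ have length $n|v| \to \infty$.

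Next I would exploit that we are working in free semigroups. Suppose for contradiction that $X = \{(u_{i},v_{i}) : 1 \leq i \leq p\} \subseteq A^{+}\times B^{+}$ generates $\fibpro$. In any expression of $(u,v^{n})$ as a product of elements of $X$, the first coordinates concatenate in $A^{+}$ to give $u$; since every generator has a \emph{non-empty} first coordinate $u_{i} \in A^{+}$, the number of factors is at most $|u|$. Consequently the second coordinate satisfies $n|v| = |v^{n}| \leq |u|\cdot \max_{1 \leq i \leq p}|v_{i}|$, a bound independent of $n$. Taking $n$ large contradicts this, so no finite generating set can exist, and hence $S$ must be idempotent-free.

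The step that must be got right — and the reason the statement is restricted to free \emph{semigroups} rather than free monoids — is the factor-bounding argument: it depends essentially on each generator having a non-empty first coordinate, which is what lets $|u|$ cap the number of factors. In the monoid setting a generator of the form $(\epsa, v_{i})$ could contribute arbitrary length to the second coordinate while consuming none of the first, so the argument breaks down; indeed \thref{FreeMonIff} exhibits finitely generated fiber products whose (idempotent-containing) finite cyclic-group fibers are genuinely realised. Beyond flagging that distinction, the argument is routine and parallels \thref{FreeSemiFin} directly.
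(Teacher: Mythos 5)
Your proof is correct and is essentially identical to the paper's own argument: the same family $(u,v^{n})$, the same bound of $|u|$ on the number of factors forced by non-empty first coordinates, and the same length contradiction. The closing remark about why this fails for monoids is a sound observation but not needed for the proof itself.
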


Our next result tells us about Green's relations on fiber quotients of fiber products of free semigroups, and in particular that they are all equal to the trivial relation.

\begin{propn}\thlabel{jtrivfiber}
Let $\varphi : A^{+} \rightarrow S$, $\psi : B^{+} \rightarrow S$ be two epimorphisms onto a semigroup fiber $S$. If the fiber product of $A^{+}$ with $B^{+}$ over $S$ with respect to $\varphi, \psi$ is finitely generated, then $S$ is $\J$-trivial.

\begin{proof}
Suppose to the contrary, that there exist some $s,t \in S$ with $(s,t) \in \J$, but $s \not = t$. Then in particular, there exists $x,y,x',y' \in S^{1}$ such that $s = xty$, $t = x'sy'$. In particular, $s = (xx')^{n}t(y'y)^{n}$ for all $n \in \N$. By surjectivity, there exists $u \in A^{+}$ and $v,w,w'\in B^{+}$ such that $\varphi(u) = s$, $\psi(v) = t$, $\psi(w) = xx'$, and $\psi(w') = y'y$. Hence $(u,w^{n}v(w')^{n} \in \fibpro$ for all $n \in \N$. If $X = \{(u_{i},v_{i}) : 1 \leq i \leq p\}  \subseteq A^{+} \times B^{+}$ were a finite generating set for $\fibpro$, then as $u$ can be decomposed into at most $|u|$ factors in $A^{+}$, it follows that each pair $(u,w^{n}v(w')^{n})$ can be decomposed into at most $|u|$ factors in $\langle X \rangle$. This is a contradiction, as this implies that $|w^{n}v(w')^{n}| \leq |u|\max_{1 \leq i \leq p}|v_{i}|$ for all $n \in \mathbb{N}$.
\end{proof}
\end{propn}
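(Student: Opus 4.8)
The plan is to argue by contraposition: I will assume $S$ is not $\J$-trivial and deduce that $\fibpro$ cannot be finitely generated. So suppose there are $s,t\in S$ with $(s,t)\in\J$ but $s\neq t$. Unpacking the definition of $\J$, there exist $x,x',y,y'\in S^{1}$ with $s=xty$ and $t=x'sy'$. Substituting the first equation into the second gives $t=x'(xty)y'=(x'x)\,t\,(yy')$, and iterating yields $t=(x'x)^{n}\,t\,(yy')^{n}$ for every $n\in\N$; plugging this back into $s=xty$ produces the key identity
\[ s = x\,(x'x)^{n}\,t\,(yy')^{n}\,y \qquad (n\in\N). \]

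Next I would lift this identity to words. Since $s\in S$ and $\varphi$ is onto, I fix once and for all a single $u\in A^{+}$ with $\varphi(u)=s$; this $u$ serves as a fixed first coordinate. On the $B^{+}$ side, for each factor on the right-hand side that genuinely lies in $S$ (rather than being the adjoined identity $1\in S^{1}\setminus S$) I choose a $\psi$-preimage in $B^{+}$, and concatenate these preimages in order to form a word $v_{n}\in B^{+}$ with $\psi(v_{n})=s$. Then $\varphi(u)=s=\psi(v_{n})$, so $(u,v_{n})\in\fibpro$ for every $n$, yielding an infinite family of fiber-product elements all sharing the same first coordinate $u$.

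The crucial quantitative point is that $|v_{n}|\to\infty$, and the careful $S^{1}$ bookkeeping here is the step I expect to be the main obstacle. A factor such as $x'x$ contributes a genuinely nonempty block repeated $n$ times only when $x'x\neq 1$ in $S^{1}$. I would note that $x'x=1$ forces $x=x'=1$, and likewise $yy'=1$ forces $y=y'=1$; were both to hold we would get $s=xty=t$, contradicting $s\neq t$. Hence at least one of $x'x$ and $yy'$ is a genuine element of $S$, its chosen preimage is a nonempty word occurring $n$ times in $v_{n}$, and therefore $|v_{n}|\geq n$ grows without bound.

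Finally I would invoke the fixed-first-coordinate counting argument used throughout this section (as in \thref{FreeSemiFin} and \thref{torsionfree}). Suppose $X=\{(u_{i},v_{i}):1\leq i\leq p\}\subseteq A^{+}\times B^{+}$ were a finite generating set. Any factorisation of $(u,v_{n})$ over $X$ projects in the first coordinate onto a factorisation of $u$ into nonempty words of $A^{+}$, so it involves at most $|u|$ generators; consequently $|v_{n}|\leq |u|\cdot\max_{1\leq i\leq p}|v_{i}|$, a bound independent of $n$. This contradicts $|v_{n}|\to\infty$, so no finite generating set can exist and $\fibpro$ is not finitely generated, which establishes the contrapositive and hence the proposition.
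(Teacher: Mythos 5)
Your proof is correct and takes essentially the same route as the paper's: iterate the two $\J$-relations to manufacture infinitely many elements of $\fibpro$ sharing one fixed first coordinate $u$ but with second coordinates of unbounded length, then apply the counting argument that any factorisation of such a pair over a finite generating set projects to a factorisation of $u$ and so has at most $|u|$ factors, bounding the second coordinate's length. If anything, your write-up is more careful than the paper's at two points: your iterated identity $s = x(x'x)^{n}t(yy')^{n}y$ is the correct one (the paper's displayed identity $s = (xx')^{n}t(y'y)^{n}$ should read $s = (xx')^{n}s(y'y)^{n}$, with the middle word $v$ chosen so that $\psi(v)=s$ rather than $t$), and your $S^{1}$ bookkeeping handles the case where $xx'$ or $y'y$ equals the adjoined identity and hence has no $\psi$-preimage in $B^{+}$, a case the paper's choice of $w,w'\in B^{+}$ silently assumes away.
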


In the next result, we show that the properties of being $\J$-trivial and idempotent-free are not sufficient conditions for finite generation of fiber products of free semigroups. We draw an analogy with \thref{FreeMonIff} by choosing the free monogenic semigroup as a fiber, and show in particular that conditions on the associated homomorphisms are again necessary.

\begin{thm}\thlabel{Ntheorem} Let $A^{+},B^{+}$ be two free semigroups, and let $\varphi : A^{+} \rightarrow \N$, $\psi : B^{+} \rightarrow \N$ be two epimorphisms. Then the fiber product $\fibpro$ of $A^{+}$ with $B^{+}$ over $\N$ with respect to $\varphi, \psi$ is finitely generated if and only if either $|\varphi(A)| = 1$ or $|\psi(B)| = 1$.

\begin{proof}
$(\Rightarrow)$ We show the contrapositive. If $|\varphi(A)|, |\psi(B)| > 1$, then assume without loss that there exists $a \in A$, $b \in B$ such that $a\varphi = m$, $b\psi = n$, for some $m\geq n > 1$. As $\varphi$ is surjective, there exists some $x \in A$, $y \in B$ such that $\varphi(x) = 1 = \psi(y)$. \\

%\textit{Case 1: $m = n$.} This means $(xa^{k})\varphi = km+1 = (b^{k}y)\psi$, and hence $(xa^{k},b^{k}y) \in C$ for all $k \in \N$. We claim that $(xa^{k},b^{k}y)$ is irreducible in $C$, for all $k \in \N$. For otherwise, $(xa^{k},b^{k}y) = (u,v)(u',v')$ for some $u,u' \in A^{+}$, $v,v' \in B^{+}$, where $u$ and $v$ are proper prefixes of $xa^{k}$ and $b^{k}y$ respectively. But any proper prefix $u$ of $xa^{k}$ is such that $u\varphi \equiv 1\,\text{mod}\,m$, and any proper prefix $v$ of $b^{k}y$ is such that $v\psi \equiv 0\,\text{mod}\,m$, contradicting that $(u,v) \in C$. This proves the claim, and hence as any generating set for $C$ must contain $\{(xa^{k},b^{k}y) : k \in \N\}$, it follows that $C$ is not finitely generated.\\

Note that $m = qn + r$ for some $q \in \N$, $0 \leq r < n$. As $\varphi(xa^{k}) = 1 +km = \varphi((b^{q}y^{r})^{k}y)$ (where $y^{0}$ is taken to be the empty word), it follows that $(xa^{k},(b^{q}y^{r})^{k}y) \in C$ for all $k \in \N$. We claim that $(xa^{k},(b^{q}y^{r})^{k}y)$ is irreducible in $C$, for all $k \in \N$. For otherwise, $(xa^{k},(b^{q}y^{r})^{k}y)= (u,v)(u',v')$ for some $u,u' \in A^{+}$, $v,v' \in B^{+}$, where $u$ and $v$ are proper prefixes of $xa^{k}$ and $(b^{q}y^{r})^{k}y$ respectively.\\

 Any proper prefix $u$ of $xa^{k}$ is such that $\varphi(u) \equiv 1\,\text{mod}\,m$, but any proper prefix $v$ of $(b^{q}y^{r})^{k}y$ is such that $v\psi \equiv j\,\text{mod}\,m$, where $j \in \{0, n,2n,\hdots,qn,qn+1,qn+2,\hdots,qn+r-1\}$. As $n \not = 1$ by assumption, and $k$ comes from a subset of least positive residues modulo $m$, it follows that $\psi(v) \not \equiv 1 \,\text{mod}\,m$, contradicting that $(u,v) \in C$. This proves the claim, and hence as any generating set for $C$ must contain $\{(xa^{k},(b^{q}y^{r})^{k}y) : k \in \N\}$, it follows that $C$ is not finitely generated.\\

$(\Leftarrow)$ It is enough to prove the statement assuming $|\varphi(A)| = 1$ without loss. As this is equivalent to $a\varphi = 1$ for all $a \in A$ by surjectivity of $\varphi$, it follows that
$$C = \{(u,v) \in A^{+}\times B^{+} : |u| = \psi(v)\}.$$

We claim that $C$ is generated by the set
$$X = \{(u,v) \in C : v \in B, |u| = \psi(v)\}$$
which is finite, as $B$ is finite and hence $|u|$ is bounded for $(u,v) \in X$.

Clearly $\langle X \rangle \subseteq C$. To prove the opposite containment, let $(u,v) \in C$. Then as \linebreak $u = a_{1}a_{2}\hdots a_{|u|}$ for some $a_{1},\hdots,a_{|u|} \in A$, and $v = b_{1}b_{2}\hdots b_{|v|}$ for some $b_{1},\hdots,b_{|v|} \in B$, it follows that $|u| = \psi(v) = \sum_{i=1}^{|v|}\psi(b_{i})$. As
\begin{equation}(u,v) = (a_{1}a_{2}\hdots a_{|u|}, b_{1}b_{2}\hdots b_{|v|}) = \prod_{i=0}^{|v|-1} (a_{j_{i} + 1}a_{j_{i} + 2}\hdots a_{j_{i} + (b_{i+1})\psi}, b_{i+1}) \in \langle X \rangle \label{decomp}\end{equation}
where $$j_{i} = \left\{\begin{matrix*}[l] 0 & \text{ if } i = 0 \\ \displaystyle \sum_{k=1}^{i}\psi(b_{k}) & \text{ otherwise,}\end{matrix*}\right. $$
the claim holds, as \eqref{decomp} gives a decomposition of $(u,v)$ into a product of elements in $X$.
%(a_{1}\hdots a_{b_{1}\varphi},b_{1})(a_{b_{1}\varphi+1}\hdots a_{b_{1}\varphi + b_{2}\varphi},b_{2})$$
\end{proof}
\end{thm}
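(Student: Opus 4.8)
The plan is to treat the two directions separately, invoking the characterisation of finite generation via indecomposable elements from \thref{FGiffIndecompFin} for the harder (forward) direction. Throughout I would note that an epimorphism onto $\N$ (the free monogenic semigroup $\{1,2,3,\dots\}$ under addition) is determined additively by the values it assigns to letters, and that surjectivity forces at least one letter of $A$ (resp.\ $B$) to map to $1$, since $1$ is only expressible as a sum of positive integers trivially. In particular, $|\varphi(A)| = 1$ is equivalent to $\varphi(u) = |u|$ for all $u \in A^{+}$.

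For the ($\Leftarrow$) direction I would assume without loss that $|\varphi(A)| = 1$, so that $\varphi$ is the length map and $\fibpro = \{(u,v) \in A^{+}\times B^{+} : |u| = \psi(v)\}$. I would then exhibit the explicit finite set $X = \{(u,b) : b \in B,\ |u| = \psi(b)\}$, which is finite because $B$ is finite and, for each fixed $b$, there are only finitely many words $u \in A^{+}$ of length $\psi(b)$. Given any $(u,v) \in \fibpro$ with $v = b_{1}\cdots b_{k}$, the constraint $|u| = \sum_{i}\psi(b_{i})$ lets me cut $u$ into consecutive blocks of lengths $\psi(b_{1}),\dots,\psi(b_{k})$ and read off a factorisation of $(u,v)$ into members of $X$; this is routine bookkeeping.

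For the ($\Rightarrow$) direction I would prove the contrapositive: assuming $|\varphi(A)| > 1$ and $|\psi(B)| > 1$, I show $\fibpro$ has infinitely many indecomposable elements, whence it is not finitely generated by \thref{FGiffIndecompFin}. Since both images are nontrivial yet contain $1$, I may pick $a \in A$ with $\varphi(a) = m > 1$, $b \in B$ with $\psi(b) = n > 1$, and letters $x \in A$, $y \in B$ with $\varphi(x) = \psi(y) = 1$; by the symmetry between the two factors I assume $m \ge n$. The idea is to build a family $(U_{k}, V_{k})_{k \in \N}$ whose first coordinate $U_{k} = xa^{k}$ has the feature that every proper nonempty prefix has $\varphi$-value $\equiv 1 \pmod m$, while the second coordinate is arranged so that no proper nonempty prefix can attain $\psi$-value $\equiv 1 \pmod m$. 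Writing $m = qn + r$ with $0 \le r < n$ (so $q \ge 1$ since $m \ge n$), I take $V_{k} = (b^{q}y^{r})^{k}y$ and check $\psi(V_{k}) = k(qn+r) + 1 = km + 1 = \varphi(U_{k})$, placing $(U_{k}, V_{k})$ in $\fibpro$.

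The crux, and the step I expect to be the main obstacle, is the indecomposability of each $(U_{k}, V_{k})$. In any factorisation $(U_{k},V_{k}) = (u,v)(u',v')$ inside $\fibpro$, the word $v$ is a proper nonempty prefix of $V_{k}$ with $\psi(v) = \varphi(u) \equiv 1 \pmod m$, and I must rule this out. The residues modulo $m$ attained by proper prefixes of $(b^{q}y^{r})^{k}y$ lie in $\{0, n, 2n, \dots, qn\} \cup \{qn+1, \dots, qn+r-1\}$, because completing a full block $b^{q}y^{r}$ contributes exactly $m \equiv 0$ and the only partial contributions come from within a single block. Since $n > 1$, no multiple $sn$ (with $1 \le s \le q$) equals $1$, and since $q \ge 1$ we have $qn + t \ge n > 1$ for every $t \ge 0$; hence $1$ is not among these residues, contradicting $\psi(v) \equiv 1 \pmod m$. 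This produces infinitely many indecomposable elements (pairwise distinct by length), completing the argument.
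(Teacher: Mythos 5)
Your proposal is correct and follows essentially the same route as the paper: the same generating set $X = \{(u,b) : b \in B,\ |u| = \psi(b)\}$ with the block-cutting factorisation for the ($\Leftarrow$) direction, and the same family $(xa^{k}, (b^{q}y^{r})^{k}y)$ with the identical residue-class analysis modulo $m$ for the ($\Rightarrow$) direction. The only cosmetic difference is that you cite \thref{FGiffIndecompFin} explicitly, while the paper just uses the (easy) fact that every generating set must contain all indecomposable elements; your write-up of the residue argument is in fact cleaner than the paper's.
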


Our next example notes that the above result does not, however, generalise to the class of free commutative semigroups.

\begin{exam}\thlabel{freecommsgp} Let $A^{+},B^{+}$ be two free semigroups, and let $\varphi : A^{+} \rightarrow F$, $\psi : B^{+} \rightarrow F$ be two epimorphisms onto a free commutative semigroup $F$ of (finite) rank larger than one. Then the fiber product $C$ of $A^{+}$ with $B^{+}$ over $F$ with respect to $\varphi, \psi$ is not finitely generated. 
\begin{proof}
Let $x,y \in F$ be two generators for $F$. Then as $\varphi, \psi$ are surjections, there exists $a,a' \in A$, $b,b' \in B$ such that $\varphi(a) = \psi(b) = x$, and $\varphi(a') =\psi( b') = y$. As $F$ is commutative, then $x^{n}y = yx^{n}$ for all $n \in \N$, and so it follows that $(a^{n}a', b'b^{n}) \in C$ for all $n \in \N$.

 As any proper prefix $u$ of $a^{n}a'$ is a power of $a$, it follows that $\varphi(u)$ is a power of $x$. But as any proper prefix $v$ of $b'b^{n}$ begins with $b'$, it follows that $\psi(v)$ contains a $y$. Hence $(a^{n}a',b'b^{n})$ is indecomposable in $C$ for all $n \in N$, as there are no proper prefixes $u$ of $a^{n}a'$, $v$ of $b'b^{n}$ such that $(u,v) \in C$.\end{proof}
\end{exam}

\section{Decision problems for free quotients}\label{decsec}

Perhaps the most natural example of semigroups satisfying the necessary conditions of finite generation given above in \thref{fgcompsandfiberlemma}, \thref{torsionfree}, and \thref{jtrivfiber} are the finitely generated free semigroups and monoids. Hence in this section, we consider some decision problems for fiber products of free semigroups/monoids with free fiber quotients. Our first observation establishes an equivalence between word problems in the fiber product and fiber quotient.

\begin{lemma}
Let $S,T,F$ be semigroups, and let $\varphi : S \rightarrow F$, $\psi : T \rightarrow F$ be epimorphisms, with $\fibpro$ finitely generated. Then the generalized  word problem for $\fibpro$ in $S\times T$ is decidable if and only if the word problem of $F$ is decidable.

\begin{proof}
Let $X \subseteq S \times T$ be any generating set for $S\times T$ and let $w$ be a word over $X$. Then $w \in \fibpro \Leftrightarrow \varphi(\pi_{S}(w)) = \psi(\pi_{T}(w))$. As $F$ is finitely generated by \thref{fgcompsandfiberlemma}, then by writing $\varphi(\pi_{S}(w))$ and $ \psi(\pi_{T}(w))$ as words over any finite generating set $Y$, it is decidable whether or not $(\varphi(\pi_{S}(w)), \psi(\pi_{T}(w)) \in \text{WP}(F,Y)$ and hence whether or not $w$ represents a word in $\fibpro$. Hence the generalized word problem for $\fibpro$ in $S \times T$ is decidable.\\

For the reverse direction, let $Y\subseteq F$ be any finite generating set for $F$ (whose existence is given by \thref{fgcompsandfiberlemma}) and $u,v$ be words over $Y$. As $\varphi, \psi$ are surjections, then there exists $(s,t) \in S \times T$ such that $(\varphi(s),\psi(t)) = (u,v)$.  Given a finite generating set $X$ for $S\times T$ and writing $(s,t)$ as a word over $X$, it is decidable whether or not $(s,t)$ represents a word in $\fibpro$ by decidability of the generalized word problem for $\fibpro$ in $S\times T$, and hence whether or not $(u,v) \in \text{WP}(F,Y)$. Hence the word problem of $F$ is decidable.
\end{proof}
\end{lemma}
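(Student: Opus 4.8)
The plan is to prove both directions via a single structural observation: membership of a word $w$ (over a generating set of $S \times T$) in $\fibpro$ is, by the very definition of the fiber product, equivalent to the equation $\varphi(\pi_S(w)) = \psi(\pi_T(w))$ holding in $F$. So the generalized word problem for $\fibpro$ in $S \times T$ is nothing other than an instance of an equality test in $F$, and conversely every equality test in $F$ can be realized as a membership test. The two directions are then obtained by showing each reduction is computable, using finite generation of $F$ (which is guaranteed by \thref{fgcompsandfiberlemma}, since $\fibpro$ is assumed finitely generated).

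First I would handle the forward direction. Fix any finite generating set $X$ for $S \times T$ and take $w \in X^{+}$ (or $X^{*}$ in the monoid case). The projections $\pi_S, \pi_T$ are homomorphisms, so from the word $w$ one computes the elements $\pi_S(w) \in S$ and $\pi_T(w) \in T$ as words over the projected generators $\pi_S(X)$ and $\pi_T(X)$. Applying $\varphi$ and $\psi$ letter-by-letter rewrites these as words over a fixed finite generating set $Y$ of $F$. By definition $w \in \fibpro$ iff $\varphi(\pi_S(w)) = \psi(\pi_T(w))$ in $F$, i.e. iff the pair of $Y$-words lies in $\mathrm{WP}(F,Y)$; so a decision procedure for $\mathrm{WP}(F,Y)$ yields one for membership in $\fibpro$.

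For the reverse direction I would run the reduction backwards. Given $u, v \in Y^{+}$, the point is to manufacture a single element of $S \times T$ whose membership in $\fibpro$ encodes the equality $u =_F v$. By surjectivity of $\varphi$ and $\psi$ one selects preimages $s \in S$ with $\varphi(s) = u$ and $t \in T$ with $\psi(t) = v$, expresses $(s,t)$ as a word over the chosen generating set $X$ of $S \times T$, and applies the assumed membership algorithm: $(s,t) \in \fibpro$ iff $\varphi(s) = \psi(t)$ in $F$, i.e. iff $(u,v) \in \mathrm{WP}(F,Y)$. This decides the word problem of $F$.

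The main obstacle to watch is effectivity of the auxiliary choices rather than their existence. In the forward direction everything is plainly computable, since applying the homomorphisms $\pi_S,\pi_T,\varphi,\psi$ to an explicit word is a finite symbol-substitution. The delicate point is the reverse direction, where one must \emph{effectively} produce preimages $s,t$ and an $X$-expression for $(s,t)$; I would note that a preimage of a generator $x \in Y$ under $\varphi$ can be fixed once and for all (surjectivity gives one, and we may hard-code a choice per generator), after which a preimage of the whole word $u$ is obtained by concatenation, and likewise for $v$ under $\psi$. Since these finitely many per-generator choices are part of the fixed data, the whole reduction is algorithmic. This is the only step where one must argue that "exists" upgrades to "computable," and the upgrade is immediate because $Y$ is finite.
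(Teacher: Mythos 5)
Your proposal is correct and takes essentially the same route as the paper's own proof: the forward direction reduces membership of a word $w$ over $X$ in $\fibpro$ to the equality test $\varphi(\pi_{S}(w)) = \psi(\pi_{T}(w))$ in $\textup{WP}(F,Y)$ (using finite generation of $F$ from \thref{fgcompsandfiberlemma}), and the reverse direction tests $(u,v) \in \textup{WP}(F,Y)$ by querying membership of a preimage pair $(s,t)$ with $(\varphi(s),\psi(t)) = (u,v)$. Your closing remark on hard-coding per-generator preimages so that the reverse reduction is effective is a point the paper leaves implicit, but it does not alter the argument.
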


For the remainder of this section, we seek to answer the following decision question:
\begin{que}\thlabel{decidabilityquestionforfib}
Is the finite generation problem for a fiber product of two free monoids with a free monoid fiber quotient decidable?
\end{que}

To answer this, we use a two tape automaton construction, for which we use the following definition:

\begin{defn}
A \textit{two-tape automaton} is a $6$-tuple $\mathcal{A} = (Q,\Sigma_{1},\Sigma_{2},\delta,\iota,F)$, where $Q$ is a finite set of states, $\Sigma_{1}$, $\Sigma_{2}$ are two input alphabets, $\delta \subseteq Q \times \Sigma \times Q$ is the transition relation (where $\Sigma = (\Sigma_{1}\cup\{\varepsilon_{1}\})\times (\Sigma_{2}\cup\{\varepsilon_{2}\})$, and $\varepsilon_{1},\varepsilon_{2}$ are the empty words over $\Sigma_{1},\Sigma_{2}$ respectively), $\iota \in Q$ is the initial state, $F \subseteq Q$ is the set of final states. 

An \textit{input} is a pair of words $(u,v) \in \Sigma_{1}^{*} \times \Sigma_{2}^{*}$. A two-tape automaton $\mathcal{A}$ \textit{accepts the input} $(u,v)$ if there exists a finite sequence of transitions $(q_{i-1},\sigma_{i},q_{i})_{i=1}^{k}$ where $q_{0} = \iota$, $q_{k} \in F$, and $(u,v) = \sigma_{i}\dots \sigma_{k}$. The \textit{language accepted by} $\mathcal{A}$ is the set $\mathcal{L}(\mathcal{A})$ of all inputs accepted by $\mathcal{A}$.

Finally, a \textit{cycle} of a two-tape automaton is a finite sequence of transitions $(q_{i-1},\sigma_{i},q_{i})_{i=1}^{k}$ where $q_{0} = q_{k}$. \\
\end{defn}

The construction process used in answering \thref{decidabilityquestionforfib} is then as follows. Let \linebreak $\varphi : A^{*} \rightarrow C^{*}$, $\psi : B^{*} \rightarrow C^{*}$ be two epimorphisms with $A,B,C$ finite alphabets. Let $\mathcal{A_{\varphi,\psi}} = (Q,\Sigma_{1},\Sigma_{2},\delta,\iota,F)$ be the associated two tape automaton (where an input $(u,v) \in A^{*} \times B^{*}$) given by the following:
\begin{itemize}
\item $Q :=  Q_{1} \cup Q_{2}\cup\{\iota\}\cup\{(\epsc,\epsc)\}$ where
\begin{align*}Q_{1} &:= \{(u,\epsc) \in C^{+}\times \{\epsc\} : (\exists w \in \varphi(A))(u <_{s} w)\};\\
Q_{2} & := \{(\epsc,v) \in \{\epsc\}\times C^{+} : (\exists w \in \psi(B))(v <_{s} w)\}; \end{align*} 
\item $\Sigma_{1} := A$, $\Sigma_{2} := B$.
\item $\delta = \bigcup_{i=1}^{8} \Delta_{i}\subset Q \times \Sigma \times Q$, where
\begin{align*}
\Delta_{1} & = \{(\iota,(a,\epsb),(\epsc,\epsc)) : a \in A, \varphi(a) = \epsc\} \\
\Delta_{2} & = \{(\iota,(\epsa,b),(\epsc,\epsc)) : b \in B, \psi(b) = \epsc\} \\
\Delta_{3} & =  \{(\iota,(a,b),(\psi(b)^{-1}\varphi(a),\epsc)) : a \in A, b \in B, \psi(b) \leq_{p} \varphi(a), \psi(b) \not = \epsc\}; \\
\Delta_{4} & =  \{(\iota,(a,b),(\epsc,\varphi(a)^{-1}\psi(b)) :  a \in A, b \in B, \varphi(a) \leq_{p} \psi(b), \varphi(a) \not = \epsc\}; \\
\Delta_{5} & =  \{((u,\epsc),(\epsa,b),(\psi(b)^{-1}u,\epsc))  :  b \in B, u \not = \epsc, \, \psi(b) \leq_{p} u\}; \\
\Delta_{6} & =  \{((u,\epsc),(\epsa,b),(\epsc,u^{-1}\psi(b))) :  b \in B, u \not = \epsc, \, u \leq_{p} \psi(b) \}; \\
\Delta_{7} & =  \{((\epsc,v),(a,\epsb),(\epsc,\varphi(a)^{-1}v) ) : a \in A, v \not = \epsc, \, \varphi(a) \leq_{p} v \}; \\
\Delta_{8} & =  \{((\epsc,v),(a,\epsb),(v^{-1}\varphi(a),\epsc ) : a \in A, v \not = \epsc, \, v \leq_{p} \varphi(a) \};
\end{align*}
\item $\iota$ is the initial state, $F = \{(\epsc,\epsc)\}$ is the set of final states.
\end{itemize}

\begin{exam}

Let $\varphi : \{a,b\}^{*} \rightarrow \{x\}^{*}$ be defined by $\varphi(a) = \varphi(b) = x$, and let \linebreak$\psi : \{a,b\}^{*} \rightarrow \{x\}^{*}$ be defined by $\varphi(a) = x^{3}$, $\psi(b) = x^{2}$. Then $\varphi(\{a,b\}) = \{x\}$ which contains no words with proper suffixes, and hence $Q_{1} = \emptyset$. However, \linebreak $\psi(\{a,b\}) = \{x^{2},x^{3}\}$, for which the set of proper suffixes is $\{x,x^{2}\}$. Hence \linebreak $Q_{2} = \{(\epsc, x), (\epsc, x^{2})\}$. Thus $\mathcal{A}_{\varphi,\psi}$ has state set \[Q = \{\iota,(\epsc,x),(\epsc,x^{2}),(\epsc,\epsc)\}.\]

%\begin{figure}[h!]\label{statesfig1}
%\begin{tikzpicture}
%\node[state, initial] (iota) {$\iota$};
%\node[state, right of = iota] (q2) {$\varepsilon,x^{2}$};
%\node[state, right of=q2] (q3) {$\varepsilon,x$};
%\node[state, accepting, right of = q3] (q4) {$\varepsilon,\varepsilon$};
%\end{tikzpicture}
%\caption{The states of $\mathcal{A}_{\varphi,\psi}$}
%\end{figure}

For the edges in $\delta$, we note that $\Delta_{1} = \Delta_{2} = \emptyset$, as $\epsc \not \in \varphi(\{a,b\})$ and  $\epsc \not \in \psi(\{a,b\})$. $\Delta_{3}$ is also empty, as $\varphi(\{a,b\}) = \{x\}$, for which the only prefixes  are $\epsc$ and $x$, which are not in $\psi(\{a,b\})$. \\

 For $\Delta_{4}$ however, we obtain the edges $(\iota,(a,a),(\epsc,x))$ (as $\varphi(a) = x, \psi(a) = x^{2}$, and $\varphi(a) \leq_{p}\psi(a)$ with $\varphi(a)^{-1}\psi(a) = x^{-1}x^{2} = x$) and $(\iota,(a,b),(\epsc,x^{2}))$ (as $\varphi(a) = x,\linebreak  \psi(b) = x^{3}$, and $\varphi(a) \leq_{p}\psi(b)$ with $\varphi(a)^{-1}\psi(b) = x^{-1}x^{3} = x^{2}$).

%\begin{figure}
%\begin{tikzpicture}	
%\node[state, initial] (iota) {$\iota$};
%\node[state, right of = iota] (q2) {$\varepsilon,x^{2}$};
%\node[state, right of=q2] (q3) {$\varepsilon,x$};
%\node[state, accepting, right of = q3] (q4) {$\varepsilon,\varepsilon$};
%\draw
%(iota) edge[bend left, above] node{$(a,b)$} (q2)
%(iota) edge[bend right, below] node{$(a,a)$} (q2);
%\end{tikzpicture}
%\caption{The states of $\mathcal{A}_{\varphi,\psi}$, together with edges from $\Delta_{1} \cup \Delta_{2} \cup \Delta_{3} \cup \Delta_{4}$.}
%\label{stateswithd4}
%\end{figure}

$\Delta_{5}$ and $\Delta_{6}$ are empty once more, as there are no states of the form $(u,\epsc) \in Q_{1}$ as $Q_{1}$ is empty. A verification similar to previous calculations however gives the edges\begin{align*}((\epsc,x^{2}),(b,\epsb),(\epsc,x)),&&
((\epsc,x^{2}),(a,\epsb),(\epsc,x)),\\
 ((\epsc,x),(b,\epsb),(\epsc,\epsc)),&&
((\epsc,x),(a,\epsb),(\epsc,\epsc))\end{align*}
from $\Delta_{7}$. Noting that $\Delta_{8} = \emptyset$, we obtain the full automaton $\mathcal{A}_{\varphi,\psi}$, as seen below.

%\begin{figure}[h!]
%\begin{tikzpicture}
%\node[state, initial] (iota) {$\iota$};
%\node[state, right of = iota] (q2) {$\varepsilon,x^{2}$};
%\node[state, right of=q2] (q3) {$\varepsilon,x$};
%\node[state, accepting, right of = q3] (q4) {$\varepsilon,\varepsilon$};
%\end{tikzpicture}
%
%\begin{tikzpicture}	
%\node[state, initial] (iota) {$\iota$};
%\node[state, right of = iota] (q2) {$\varepsilon,x^{2}$};
%\node[state, right of=q2] (q3) {$\varepsilon,x$};
%\node[state, accepting, right of = q3] (q4) {$\varepsilon,\varepsilon$};
%\draw
%(iota) edge[bend left, above] node{$(a,b)$} (q2)
%(iota) edge[bend right, below] node{$(a,a)$} (q2);
%\end{tikzpicture}

\begin{center}\begin{tikzpicture}
\node[state, initial] (iota) {$\iota$};
\node[state, above right of = iota,xshift=0.5in] (q2) {$\varepsilon,x^{2}$};
\node[state, below right of=iota,xshift=0.5in] (q3) {$\varepsilon,x$};
\node[state, accepting, right of = q3,xshift=0.5in] (q4) {$\varepsilon,\varepsilon$};

\draw 
(iota) edge[above left] node{$(a,b)$} (q2)
(iota) edge[below left] node{$(a,a)\hspace*{0.1in}$} (q3)
(q2) edge[bend left, right] node{$(a,\epsb)$} (q3)
(q2) edge[bend right, left=0.3] node{$(b,\epsb)$} (q3)
(q3) edge[bend right, below] node{$(a,\epsb)$} (q4)
(q3) edge[bend left, above] node{$(b,\epsb)$} (q4);
\end{tikzpicture}\end{center}
%\caption{The automaton $\mathcal{A}_{\varphi,\psi}$.}
%\label{fullauto1}
%\end{figure}

\end{exam}

\begin{exam} 
Let $\varphi: \{a,b,c\}^{*} \rightarrow \{x,y\}^{*}$, $\psi : \{a,b,c\}^{*} \rightarrow \{x,y\}^{*}$ be defined by $\varphi(a) = x, \varphi(b) = y, \varphi(c) = xy$ and $\psi(a) = x, \psi(b) = y, \psi(c) = y^{2}$. Then $\mathcal{A}'$ is given below.
\begin{center}
\begin{tikzpicture}
\node[state, initial] (iota) {$\iota$};
\node[state, accepting, below of = iota, yshift = 0.35in] (end) {$\varepsilon,\varepsilon$};
\node[state, above of = iota, yshift = -0.35in] (q2) {$\varepsilon,y$};
\node[state,  right of = iota,xshift = 0.5in] (q3) {$y,\varepsilon$};

\draw 
(iota) edge[left] node{$(b,c)$} (q2)
(iota) edge[above] node{$(c,a)$} (q3)
(iota) edge[bend right, left] node{$(a,a)$} (end)
(q3) edge[below right] node{$(\epsa,b)$} (end)
(q3) edge[above right] node{$(\epsa,c)$} (q2)
(q2) edge[bend right=90, left] node{$(b,\epsb)$} (end)
(iota) edge[bend left, right] node{$(b,b)$} (end);
%(iota) edge[below left] node{$(a,a)\hspace*{0.1in}$} (q3)
%(q2) edge[bend left, right] node{$(a,\epsb)$} (q3)
%(q2) edge[bend right, left=0.3] node{$(b,\epsb)$} (q3)
%(q3) edge[bend right, below] node{$(a,\epsb)$} (q4)
%(q3) edge[bend left, above] node{$(b,\epsb)$} (q4);
\end{tikzpicture}
\end{center}
\end{exam}

We utilise this automatic construction in the following result.

\begin{thm} \thlabel{fgnocyclesaut}
Let $\varphi : A^{*} \rightarrow C^{*}$, $\psi : B^{*} \rightarrow C^{*}$ be two epimorphisms with $A,B,C$ finite alphabets, and let  $\mathcal{A_{\varphi,\psi}}$  be the associated automaton given as above. Then the fiber product of $A^{*}$ with $B^{*}$ over $C^{*}$ with respect to $\varphi, \psi$ is finitely generated if and only if $\mathcal{A_{\varphi,\psi}}$ has no cycles.
\end{thm}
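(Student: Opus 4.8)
The plan is to reduce the statement to counting the indecomposable elements of $\fibpro$ and then to read off finiteness of that count from the cycle structure of $\mathcal{A}_{\varphi,\psi}$. By \thref{FGMiffIndecompFin}, $\fibpro$ is finitely generated if and only if it has only finitely many indecomposable elements, so it suffices to prove that $\mathcal{A}_{\varphi,\psi}$ has no cycles if and only if $\fibpro$ has finitely many indecomposables. The bridge between the two is the following \emph{Key Lemma}: the language $\mathcal{L}(\mathcal{A}_{\varphi,\psi})$ is exactly the set of non-identity indecomposable elements of $\fibpro$. Granting this, the theorem follows by elementary automata theory, and the real work is in the Key Lemma.

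To prove the Key Lemma I would first isolate the invariant the states maintain. Reading an input $(u,v)$, at any stage the automaton has consumed a prefix $X\leq_p u$ on the first tape and $Y\leq_p v$ on the second, and the rules $\Delta_3$--$\Delta_8$ are precisely those that keep $\varphi(X)$ and $\psi(Y)$ prefix-comparable, recording their difference (the ``overhang'') as the current state: a state $(z,\epsc)\in Q_1$ records $\varphi(X)=\psi(Y)\,z$ and a state $(\epsc,z)\in Q_2$ records $\psi(Y)=\varphi(X)\,z$. I would verify that each $\Delta_i$ preserves this invariant, that every visited overhang is a genuine proper suffix of an image (so really lies in $Q_1\cup Q_2$), and that the overhang is empty exactly when $\varphi(X)=\psi(Y)$. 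Crucially $\iota$ has no incoming and $(\epsc,\epsc)$ no outgoing transitions, so these are the only balance-zero states and can occur only at the very start and very end of a run. Hence an accepted input satisfies $\varphi(u)=\psi(v)$, lies in $\fibpro$, and admits no intermediate balance-zero point; since a nontrivial decomposition $(u,v)=(u_1,v_1)(u_2,v_2)$ in $\fibpro$ is exactly an intermediate pair $(u_1,v_1)$ with $\varphi(u_1)=\psi(v_1)$, acceptance forces indecomposability. For the converse I would run the essentially deterministic process on an arbitrary $(u,v)\in\fibpro$: from a $Q_1$ or $Q_2$ state the only enabled moves read the lagging tape, and because the next letter's image and the overhang are both prefixes of the remaining common word $w:=\varphi(u)=\psi(v)$, a move is always available; thus the run consumes all of $(u,v)$ and ends at $(\epsc,\epsc)$, unless it is forced through $(\epsc,\epsc)$ earlier, which happens precisely when $(u,v)$ is decomposable.

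The subtle point, and the step I expect to be the main obstacle, is showing that a decomposition point $(u_1,v_1)$ is genuinely \emph{visited} by the run, i.e.\ that the machine cannot step over the common prefix $w_1:=\varphi(u_1)=\psi(v_1)$ while its tapes are out of phase. Here I would argue that, since $u_1\leq_p u$ with $\varphi(u_1)=w_1$ and letters are read one at a time, the first tape's scan lands \emph{exactly} on $w_1$ (it cannot overshoot, as $u_1$ is a prefix of $u$), and likewise the second; then, because the automaton always advances whichever tape is behind, the two run-intervals during which the respective scans equal $w_1$ must overlap, giving a single run-state at which both scans sit at $w_1$ and the overhang is empty. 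Careful bookkeeping is needed for letters mapping to $\epsc$ (handled by $\Delta_1,\Delta_2,\Delta_5,\Delta_7$), which advance a tape without advancing its scan.

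Finally I would complete the argument from the Key Lemma. If $\mathcal{A}_{\varphi,\psi}$ has no cycles it is a finite acyclic graph, so there are only finitely many runs from $\iota$ to $(\epsc,\epsc)$ and hence $\mathcal{L}(\mathcal{A}_{\varphi,\psi})$, the set of indecomposables, is finite; by \thref{FGMiffIndecompFin} $\fibpro$ is finitely generated. If instead $\mathcal{A}_{\varphi,\psi}$ has a cycle, I would first note that the automaton is trim: every state of $Q_1\cup Q_2$ is reachable from $\iota$ and co-reachable to $(\epsc,\epsc)$, using surjectivity of $\varphi,\psi$ to supply single-letter preimages of the letters of $C$ and so build up, then dismantle, any prescribed overhang. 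A cycle therefore lies on some accepting run, and since every transition consumes at least one input letter, iterating the cycle yields accepted inputs of strictly increasing total length, hence infinitely many distinct indecomposable elements; by \thref{FGMiffIndecompFin}, $\fibpro$ is not finitely generated.
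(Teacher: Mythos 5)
Your proposal is correct and follows essentially the same route as the paper: the paper's proof reduces finite generation to finiteness of the set of indecomposables (\thref{FGiffIndecompFin}), proves your Key Lemma as \thref{indecomposableL(A)} via the same state invariant (\thref{pathlabelrelation}, \thref{quotientstateslemma}) and the same observation that balance-zero states occur only at the ends of a run, and handles a cycle exactly as you do, by using surjectivity of $\psi$ to build a path into the cycle state and one out to $(\epsc,\epsc)$, then pumping. The only quibble is your phrase ``single-letter preimages of the letters of $C$'' — surjectivity only supplies word preimages, which is all the reachability argument needs.
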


In order to prove this result, we utilise the following lemmas.

\begin{lemma} \thlabel{pathlabelrelation} Let $(u,v) \in Q$, and let $(\alpha,\beta) \in A^{*}\times B^{*}$. If a path from $\iota$ to $(u,v)$ has label $(\alpha,\beta)$, then \begin{equation}\label{pathrelationeqn}\varphi(\alpha)v = \psi(\beta)u.\end{equation}
\begin{proof} We proceed by induction on path length. The paths of length one are precisely the transitions $p \in \Delta_{1}\cup\Delta_{2}\cup\Delta_{3}\cup\Delta_{4}$. For $p \in \Delta_{1}\cup \Delta_{2}$, \eqref{pathrelationeqn} holds by definition . If $p \in \Delta_{3}$, then $p = (\iota,(a,b),(\psi(b)^{-1}\varphi(a),\epsc))$ for some $a \in A, b \in B$, and \[\varphi(\alpha)v = \varphi(a) = \psi(b)\psi(b)^{-1}\varphi(a) = \psi(\beta)u\] as required. Similarly if $p \in \Delta_{4}$, then $p = (\iota,(a,b),(\epsc,\varphi(a)^{-1}\psi(b)))$ for some \linebreak $a \in A, b \in B$, and \[\varphi(\alpha)v = \varphi(a)\varphi(a)^{-1}\psi(b) = \psi(b) = \psi(\beta)u.\] As these are all the paths of length one, this proves the base case. For the inductive hypothesis, assume that if a path from $\iota$ to $(u,v)$ of length $k$ has label $(\alpha,\beta)$, then $\varphi(\alpha)v = \psi(\beta)u$.

Consider a path from $\iota$ to $(u',v')$ of length $k+1$ with label $(\alpha,\beta)$. Then necessarily there exists a path $p$ of length $k$ from $\iota$ to some state $(u,v)$, and a transition $p'$ from $(u,v)$ to $(u',v')$. Then there are two cases for the label of $p'$:

\textit{Case 1:} If $p'$ has label $(\epsa,b)$ for some $b \in B$, then it follows that $v = \epsc$, and the path $p$ has label $(\alpha, \beta b^{-1})$. If $\psi(b) \leq_{p} u$, then $(u',v') = (\psi(b)^{-1}u,\epsc)$ by the definition of $\Delta_{5}$. Hence
\begin{align*}
\psi(\beta)u' & = \psi(\beta b^{-1})\psi(b) u' \\
                              & = \psi(\beta b^{-1})u \\
		         & = \varphi(\alpha)v \text{ by the inductive hypothesis}\\
		         & = \varphi(\alpha)v'. \\
\end{align*}
Otherwise, if $u \leq_{p} \psi(b)$, then $(u',v') = (\epsc, u^{-1}\psi(b))$ by the definition of $\Delta_{6}$ . Hence
\begin{align*}
\psi(\beta)u' & = \psi(\beta b^{-1})\psi(b) \\
                              & = \psi(\beta b^{-1})uv' \\
		         & = \varphi(\alpha)vv' \text{ by the inductive hypothesis}\\
		         & = \varphi(\alpha)v'. \\
\end{align*}

\textit{Case 2:} If $p'$ has label $(a, \epsb)$ for some $a \in A$, then it follows that $u = \epsc$, and the path $p$ has label $(\alpha a^{-1}, \beta)$. If $ \varphi(a) \leq_{p} v$, then $(u',v') = (\epsc,\varphi(a)^{-1}v)$ by the definition of $\Delta_{7}$. Hence
\begin{align*}
\varphi(\alpha)v' & = \varphi(\alpha a^{-1})\varphi(a)v' \\
                              & = \varphi(\alpha a^{-1})v  \\
		         & = \psi(\beta)u \text{ by the inductive hypothesis}\\
                              & = \psi(\beta)u'.
\end{align*}

Otherwise, if $v \leq_{p} \varphi(a)$, then $(u',v') = (v^{-1}\varphi(a),\epsc)$ by the definition of $\Delta_{8}$. Hence
\begin{align*}
\varphi(\alpha)v' & = \varphi(\alpha a^{-1})\varphi(a)\\
                              & = \varphi(\alpha a^{-1})vu' \\
		         & = \psi(\beta)uu' \text{ by the inductive hypothesis}\\
                              & = \psi(\beta)u'.
\end{align*}
Thus the result holds by induction on paths of arbitrary length.\end{proof}

\begin{lemma} \thlabel{onepathlemma} Let $(\alpha,\beta) \in A^{*}\times B^{*}$. Then there is at most one path originating from $\iota$ with label $(\alpha,\beta)$ in $\mathcal{A_{\varphi,\psi}}$.
\begin{proof}
By definition of $\delta$, the only paths originating from $\iota$ with label $(\alpha,\beta)$ for either $\alpha = \epsa$ or $\beta = \epsb$ are the length one transitions $p \in \Delta_{1}\cup\Delta_{2}$, each of which is distinct. \\

 Otherwise, any path $p$ originating from $\iota$ with label $(\alpha,\beta) \in A^{+} \times B^{+}$ is given by a sequence of transitions $(q_{i-1},\sigma_{i},q_{i})_{i=1}^{k}$ such that $q_{0} = \iota$ and $\sigma_{1}\sigma_{2}\dots \sigma_{k} = (\alpha,\beta)$. As $\alpha \in A^{+}$ and $\beta \in B^{+}$, then there exist unique decompositions $\alpha = a_{1}a_{2}\dots a_{|\alpha|}$ for some \linebreak $a_{1},\dots,a_{|\alpha|} \in A$ and $\beta = b_{1}b_{2}\dots b_{|\beta|}$ for some $b_{1},\dots,b_{|\beta|} \in B$.

We claim that state $q_{i-1}$ uniquely determines $\sigma_{i}$ for $1 \leq i \leq k$. As $p$ is a path in $\mathcal{A_{\varphi,\psi}}$, then $q_{i-1} \in \{\iota\}\cup Q_{1} \cup Q_{2}$. By the definition of $\delta$, the only instance where $q_{i-1} = \iota$ is when $i = 1$. Moreover, this implies that $\sigma_{1} = (a_{1},b_{1})$, as $\sigma_{1} \in A \times B$ and $\sigma_{1}\sigma_{2}\dots \sigma_{k} = (\alpha,\beta)$.\\
Further, as $q_{i-1} \in Q_{2}$ if and only if $\sigma_{i} \in A \times \{\epsb\}$, and as $\alpha$ has a unique decomposition over $A$, then $\sigma_{i} \in A \times \{\epsb\}$ if and only if $\sigma_{i} = (a_{j_{i}},\epsb)$ where $j_{i} = |\pi_{A^{*}}(\sigma_{1}\dots\sigma_{i-1})| + 1$.\\
Finally as $\beta$ has a unique decomposition over $B$, a similar proof shows \linebreak $q_{i-1} \in Q_{1} \Leftrightarrow \sigma_{i} = (\epsa, b_{k_{i}})$ where $k_{i} = |\pi_{B^{*}}(\sigma_{1}\dots\sigma_{i-1})| + 1$.

As $Q_{1}\cap Q_{2}\cap\{\iota\} = \emptyset$, then $q_{i-1}$ uniquely determines $\sigma_{i}$, which together uniquely determine $q_{i}$ by the definition of $\delta$. Hence $q_{0} = \iota$ and $(\alpha,\beta) = (a_{1}\dots a_{|\alpha|},b_{1}\dots b_{|\beta|})$ uniquely determine the path $p$.
\end{proof}
\end{lemma}

\end{lemma}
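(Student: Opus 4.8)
The plan is to show that $\mathcal{A}_{\varphi,\psi}$ is \emph{deterministic} on labels, in the sense that a given label forces a unique choice of transition at every step, so that the source state and the label together determine the whole path. First I would dispose of the degenerate labels. If $\alpha = \epsa$ or $\beta = \epsb$, then every transition of such a path must read a letter on only one tape; the only single-tape transitions available out of $\iota$ lie in $\Delta_1 \cup \Delta_2$, and each of these lands in the terminal state $(\epsc,\epsc)$, which has no outgoing transitions. Hence any path with such a label is a single transition, and each transition of $\Delta_1 \cup \Delta_2$ is uniquely determined by its own label. This disposes of the degenerate case (with the empty label $(\epsa,\epsb)$ corresponding only to the empty path at $\iota$).

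For the main case $\alpha \in A^{+}$, $\beta \in B^{+}$, I would argue by induction along the path that the source state $q_{i-1}$ determines the letter $\sigma_i$ read, and then that the pair $(q_{i-1},\sigma_i)$ determines the target $q_i$. The first half rests on the disjoint partition $Q = \{\iota\} \sqcup Q_1 \sqcup Q_2 \sqcup \{(\epsc,\epsc)\}$ together with the tape-selecting shape of $\delta$. Since no transition re-enters $\iota$, the source $\iota$ occurs only at $i=1$; with $\alpha,\beta$ both nonempty the only compatible transitions out of $\iota$ read a full pair (namely $\Delta_3 \cup \Delta_4$), so $\sigma_1 = (a_1,b_1)$ is forced by the unique factorizations of $\alpha,\beta$ over the free monoids $A^{*}, B^{*}$. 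Every transition out of a $Q_1$-state reads a letter $(\epsa,b)$, hence consumes the next unread letter of $\beta$, forcing $\sigma_i = (\epsa, b_{k_i})$ with $k_i = |\pi_{B^{*}}(\sigma_1\cdots\sigma_{i-1})|+1$; dually, every transition out of a $Q_2$-state reads $(a,\epsb)$, forcing $\sigma_i = (a_{j_i},\epsb)$ with $j_i = |\pi_{A^{*}}(\sigma_1\cdots\sigma_{i-1})|+1$. Thus in all cases $q_{i-1}$ and the prefix read so far pin down $\sigma_i$.

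The remaining point, and the step I expect to require the most care, is that $\delta$ restricted to a fixed source state and input letter specifies at most one target; that is, $\delta$ is really a partial function on $(\text{state},\text{letter})$ pairs rather than a genuine relation. Here I would examine the three paired families. The guards of $\Delta_3$ and $\Delta_4$ are $\psi(b)\leq_p\varphi(a)$ and $\varphi(a)\leq_p\psi(b)$, which hold simultaneously only when $\varphi(a)=\psi(b)$, in which case both prescribe the target $(\epsc,\epsc)$; so the target is unique. The same prefix dichotomy applies to $\Delta_5$ versus $\Delta_6$ (comparing $u$ and $\psi(b)$) and to $\Delta_7$ versus $\Delta_8$ (comparing $\varphi(a)$ and $v$), the overlap case again collapsing to the common target $(\epsc,\epsc)$. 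With determinism of $\delta$ in hand, the induction closes: $q_0 = \iota$ and the label $(\alpha,\beta)$ force $\sigma_1$, hence $q_1$, hence $\sigma_2$, and so on, so the entire transition sequence, and therefore the path, is uniquely determined.
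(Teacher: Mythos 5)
Your proposal is correct and follows essentially the same route as the paper's proof: dispose of labels with $\alpha = \epsa$ or $\beta = \epsb$ via $\Delta_{1}\cup\Delta_{2}$, then show the partition $Q = \{\iota\}\sqcup Q_{1}\sqcup Q_{2}\sqcup\{(\epsc,\epsc)\}$ forces which tape each transition reads, so the unique factorisations of $\alpha$ and $\beta$ pin down each $\sigma_{i}$, and determinism of $\delta$ then fixes each $q_{i}$. Your only addition is to verify explicitly that $\delta$ is a partial function on (state, letter) pairs --- checking that the prefix guards of $\Delta_{3}/\Delta_{4}$, $\Delta_{5}/\Delta_{6}$, $\Delta_{7}/\Delta_{8}$ overlap only when both targets collapse to $(\epsc,\epsc)$ --- a point the paper leaves implicit in ``by the definition of $\delta$.''
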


\begin{lemma}\thlabel{quotientstateslemma}
Let $\varphi : A^{*} \rightarrow C^{*}$, $\psi : B^{*} \rightarrow C^{*}$ be two epimorphisms with $A,B,C$ finite alphabets, let $\mathcal{A_{\varphi,\psi}}$  be the associated automaton given as above. Let $\Phi := \varphi \circ \pi_{A^{*}}$, and $\Psi := \psi \circ \pi_{B^{*}}$. Let $(q_{i-1},\sigma_{i},q_{i})_{i=1}^{k}$ be a sequence of transitions with $q_{0} = \iota$. Then either $$q_{k} =(\Psi(\sigma_{1}\dots\sigma_{k})^{-1}\Phi(\sigma_{1}\dots\sigma_{k}),\epsc),$$ if $\Psi(\sigma_{1}\dots\sigma_{k}) \leq_{p} \Phi(\sigma_{1}\dots\sigma_{k})$, or $$q_{k} =(\epsc,\Phi(\sigma_{1}\dots\sigma_{k})^{-1}\Psi(\sigma_{1}\dots\sigma_{k}))$$
if $\Phi(\sigma_{1}\dots\sigma_{k}) \leq_{p} \Psi(\sigma_{1}\dots\sigma_{k})$.
\begin{proof}
We proceed by induction on $k$. Firstly for the base case where $k=1$, as $q_{0} = \iota$, it follows that $(q_{0},\sigma_{1},q_{1}) \in \Delta_{1}\cup\Delta_{2}\cup\Delta_{3}\cup\Delta_{4}$, from which the required form of $q_{1}$ follows by definition. 

%$\sigma_{1} = (a,b)$ for some $a \in A, b \in B$ with either $\varphi(a) \leq_{p} \psi(b)$ or $\psi(b) \leq_{p} \varphi(a)$. and $q_{1} = (\varphi(a), \epsc) = (\Psi(\sigma_{1})^{-1}\Phi(\sigma_{1}),\epsc)$ as required
%
%Hence by the definition of $\Delta_{2}$ and $\Delta_{3}$, it follows that $\sigma_{2} = (\epsa,\psi(b))$ for some $b \in B$, and hence either $q_{2} = (\psi(b)^{-1}\varphi(a),\epsc)$ if $\psi(b) \leq_{p} \varphi(a)$, or $q_{2} = (\epsc, \varphi(a)^{-1}\psi(b))$ if $\varphi(a) \leq_{p} \psi(b)$. As $\sigma_{1}\sigma_{2} = (a,b)$, then it follows that either $$q_{2} = (\Psi(\sigma_{1}\sigma_{2})^{-1}\Phi(\sigma_{1}\sigma_{2}),\epsc),$$ or $$q_{2} = (\epsc,\Phi(\sigma_{1}\sigma_{2})^{-1}\Psi(\sigma_{1}\sigma_{2}))$$ as expected.

For the inductive hypothesis, assume that for $k = j$, we have either \begin{equation}q_{j} =(\Psi(\sigma_{1}\dots\sigma_{j})^{-1}\Phi(\sigma_{1}\dots\sigma_{j}),\epsc), \label{statecase1}\end{equation} or \begin{equation}q_{j} =(\epsc,\Phi(\sigma_{1}\dots\sigma_{j})^{-1}\Psi(\sigma_{1}\dots\sigma_{j})),\label{statecase2}\end{equation}
and consider the state $q_{j+1}$ in the case where $q_{j} \not = (\epsc,\epsc)$ . It suffices to assume only case \eqref{statecase1}, as the proof for case \eqref{statecase2} will follow by a symmetric argument. As $\Psi(\sigma_{1}\dots\sigma_{j})^{-1}\Phi(\sigma_{1}\dots\sigma_{j}) \not = \epsc$, by definition of $\Delta_{5},\Delta_{6}$, it follows that $\sigma_{j+1} = (\epsa,b)$ for some $b \in B$, and either 
\begin{align*}
q_{j+1} &= (\psi(b)^{-1}\Psi(\sigma_{1}\dots\sigma_{j})^{-1}\Phi(\sigma_{1}\dots\sigma_{j}),\epsc) \\ 
              &= ([\Psi(\sigma_{1}\dots\sigma_{j})\psi(b)]^{-1}\Phi(\sigma_{1}\dots\sigma_{j}),\epsc) \\
              &= ([\Psi(\sigma_{1}\dots\sigma_{j})\Psi(\sigma_{j+1})]^{-1}\Phi(\sigma_{1}\dots\sigma_{j}),\epsc) \\
              &= (\Psi(\sigma_{1}\dots\sigma_{j+1})^{-1}\Phi(\sigma_{1}\dots\sigma_{j+1}),\epsc),
\end{align*}or \begin{align*}
q_{j+1}  &= (\epsc, [\Psi(\sigma_{1}\dots\sigma_{j})^{-1}\Phi(\sigma_{1}\dots\sigma_{j})]^{-1}\psi(b)) \\ 
               &= (\epsc, \Phi(\sigma_{1}\dots\sigma_{j})^{-1}\Psi(\sigma_{1}\dots\sigma_{j})\psi(b)) \\ 
               &= (\epsc, \Phi(\sigma_{1}\dots\sigma_{j})^{-1}\Psi(\sigma_{1}\dots\sigma_{j})\Psi(\sigma_{j+1})) \\ 
               &= (\epsc, \Phi(\sigma_{1}\dots\sigma_{j+1})^{-1}\Psi(\sigma_{1}\dots\sigma_{j+1}))
\end{align*}as expected. Hence the result follows by induction.\end{proof}\end{lemma}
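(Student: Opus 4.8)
The plan is to induct on the number $k$ of transitions in the path, reading each state as the ``overhang'' between the two $C^{*}$-images $\Phi(\sigma_{1}\dots\sigma_{k})$ and $\Psi(\sigma_{1}\dots\sigma_{k})$ of the input consumed so far. The guiding observation is that the state set is built precisely so that, whenever one of these words is a prefix of the other, the automaton sits in the state whose single nonempty coordinate records the leftover suffix: the first coordinate when the $A$-side image $\Phi$ runs ahead, the second when the $B$-side image $\Psi$ runs ahead. The two displayed formulae encode exactly this dichotomy, and they coincide (both equalling $(\epsc,\epsc)$) exactly when $\Phi(\sigma_{1}\dots\sigma_{k}) = \Psi(\sigma_{1}\dots\sigma_{k})$, so the stated ``either/or'' is an inclusive disjunction.

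For the base case $k=1$ I would use that the only transitions with source $\iota$ lie in $\Delta_{1}\cup\Delta_{2}\cup\Delta_{3}\cup\Delta_{4}$ and check the four types directly. In $\Delta_{3}$, say $\sigma_{1}=(a,b)$ with $\psi(b)\leq_{p}\varphi(a)$, so $\Psi(\sigma_{1})=\psi(b)\leq_{p}\varphi(a)=\Phi(\sigma_{1})$ and the target $(\psi(b)^{-1}\varphi(a),\epsc)$ is literally $(\Psi(\sigma_{1})^{-1}\Phi(\sigma_{1}),\epsc)$; the transition $\Delta_{4}$ is the mirror image, while $\Delta_{1}$ and $\Delta_{2}$ send both images to $\epsc$ and land in $(\epsc,\epsc)$, consistent with either branch.

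For the inductive step I would first note that $(\epsc,\epsc)$ is a source of none of $\Delta_{1}$--$\Delta_{8}$, so a path of length $j+1$ necessarily passes through some $q_{j}\neq(\epsc,\epsc)$. By the inductive hypothesis, and invoking the symmetry that swaps $(\varphi,A)$ with $(\psi,B)$, I may assume $q_{j}=(\Psi(\sigma_{1}\dots\sigma_{j})^{-1}\Phi(\sigma_{1}\dots\sigma_{j}),\epsc)\in Q_{1}$ with nonempty first coordinate $u$. The only transitions out of $Q_{1}$ are $\Delta_{5}$ and $\Delta_{6}$, both labelled $(\epsa,b)$; hence $\sigma_{j+1}=(\epsa,b)$, so $\Phi(\sigma_{1}\dots\sigma_{j+1})=\Phi(\sigma_{1}\dots\sigma_{j})$ while $\Psi(\sigma_{1}\dots\sigma_{j+1})=\Psi(\sigma_{1}\dots\sigma_{j})\psi(b)$. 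The two subcases $\psi(b)\leq_{p}u$ (transition $\Delta_{5}$) and $u\leq_{p}\psi(b)$ (transition $\Delta_{6}$) are then settled by the prefix-removal identities $(\Psi(\sigma_{1}\dots\sigma_{j})\psi(b))^{-1}\Phi(\sigma_{1}\dots\sigma_{j})=\psi(b)^{-1}u$ and $\Phi(\sigma_{1}\dots\sigma_{j})^{-1}(\Psi(\sigma_{1}\dots\sigma_{j})\psi(b))=u^{-1}\psi(b)$, which reproduce the claimed forms for $q_{j+1}$ in the two branches.

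The only point needing genuine care is the bookkeeping of the prefix-removal operators together with the verification that, after each step, exactly one of $\Phi(\sigma_{1}\dots\sigma_{j+1})\leq_{p}\Psi(\sigma_{1}\dots\sigma_{j+1})$ and $\Psi(\sigma_{1}\dots\sigma_{j+1})\leq_{p}\Phi(\sigma_{1}\dots\sigma_{j+1})$ still holds---including the degenerate transition into $(\epsc,\epsc)$ when the overhang is cancelled entirely. I expect this to follow directly from the transition guards of $\Delta_{5}$--$\Delta_{8}$, which are designed to be exactly the conditions maintaining the relevant prefix relation, and so to require no idea beyond disciplined case analysis rather than to constitute a real obstacle.
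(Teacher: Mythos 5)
Your proposal is correct and follows essentially the same route as the paper: induction on the path length, with the base case read off from $\Delta_{1}$--$\Delta_{4}$, a symmetry reduction to the case $q_{j} \in Q_{1}$, and the prefix-removal identities applied to the $\Delta_{5}$/$\Delta_{6}$ transitions. Your explicit remark that $(\epsc,\epsc)$ has no outgoing transitions (so $q_{j} \neq (\epsc,\epsc)$ along any longer path) is a small tidying of a point the paper handles by simply restricting to that case, not a different argument.
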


\begin{lemma} \thlabel{indecomposableL(A)} Let $\varphi : A^{*} \rightarrow C^{*}$, $\psi : B^{*} \rightarrow C^{*}$ be two epimorphisms with $A,B,C$ finite alphabets, and let  $\mathcal{A_{\varphi,\psi}}$  be the associated automaton given as above. Then the language accepted by $\mathcal{A_{\varphi,\psi}}$ is the set of indecomposable elements of $\fibpro$.
\begin{proof} We first show that elements of $\mathcal{L}(\mathcal{A_{\varphi,\psi}})$ are indecomposable in $\fibpro$. Let $(\alpha,\beta) \in \mathcal{L}(\mathcal{A_{\varphi,\psi}})$. Then there is a path $p = (q_{i-1},\sigma_{i},q_{i})_{i=1}^{k}$ from $\iota$ to $(\epsc,\epsc)$ with label $(\alpha,\beta)$. By \thref{onepathlemma}, it follows that $\varphi(\alpha) = \psi(\beta)$, and hence $(\alpha,\beta) \in \fibpro$. 

Further, suppose for a contradiction that $(\alpha,\beta)$ is decomposable. Then \begin{equation}\label{decompalph} (\alpha,\beta) = (\alpha',\beta')(\alpha'',\beta'')\end{equation} for some $\alpha',\alpha'' \in A^{*}$, $\beta',\beta'' \in B^{*}$. To avoid contradiction, it must be that $\alpha \in A^{+}$ and $\beta \in B^{+}$, as the definition of $\delta'$ gives that the only pairs accepted by $\mathcal{A_{\varphi,\psi}}'$ involving $\epsa$ or $\epsb$ are those of the form $(\epsa, b), (a,\epsb)$ for $a \in A, b \in B$ with $\varphi(a) = \psi(b) = \epsc$ which are indecomposable in $\fibpro$. 

By the definition of $\delta'$, either $\sigma_{1} = (\alpha_{1},\epsb)$, $\sigma_{1} = (\epsa,\beta_{1})$ or $\sigma_{1} = (\alpha_{1},\beta_{1})$. The first two possibilities imply that $q_{1} = (\epsc,\epsc)$, which is a contradiction as then either $(\alpha,\beta) = (\alpha_{1},\epsc)$ or $(\epsc,\beta_{1})$, both of which are indecomposable in $\fibpro$. Hence $\sigma_{1} = (\alpha_{1},\beta_{1})$.

Returning to \eqref{decompalph}, note that if $\alpha' = \epsa$, then $\beta' \in B^{+}$ with $\psi(\beta') = \epsc$, implying $\psi(\beta'_{1}) = \psi(\beta_{1}) = \epsc$ also. This is a contradiction, as transitions of the form $(\iota, (\alpha_{1},\beta_{1}), q)$ with $\psi(\beta_{1}) = \epsc$ are excluded from $\delta'$. Similarly, $\beta' = \epsb$ also leads to a contradiction. Hence $\alpha' \in A^{+}$ and $\beta' \in B^{+}$. 

Writing $(\alpha,\beta) = (a_{1}a_{2}\dots a_{|\alpha|},b_{1}b_{2}\dots b_{|\beta|})$, then \begin{equation}\label{fullwordseq} (\alpha,\beta) = (a_{1}\dots a_{m},b_{1}\dots b_{n})(a_{m+1}\dots a_{|\alpha|},b_{n+1}\dots b_{|\beta|})\end{equation}for some $1 \leq m < |\alpha|$, $1 \leq n <|\beta|$. In particular, as $a_{1}\dots a_{m}\leq_{p} \alpha$, $b_{1} \dots b_{n} \leq_{p} \beta$, then there exist minimal $M,N < |\alpha|+|\beta|$ such that $a_{1}\dots a_{m} = \pi_{A^{*}}(\sigma_{1}\dots\sigma_{M})$ and $b_{1}\dots b_{n} = \pi_{B^{*}}(\sigma_{1}\dots\sigma_{N})$. Taking $k= \text{min}\{M,N\}$, it follows that
\[\sigma_{1}\dots \sigma_{k} = \begin{cases} (a_{1}\dots a_{m},b_{1}\dots b_{n'}) \text{ for some } n' < n & \text{ if } k = M \\                        (a_{1}\dots a_{m'},b_{1}\dots b_{n}) \text{ for some } m' < m & \text{ if } k = N.\end{cases}\] 

If $k = M$, then as $\psi(b_{1}\dots b_{l}) \leq_{p} \psi(b_{1}\dots b_{n})$ for all $n' \leq l \leq n$  and \linebreak$\psi(b_{1}\dots b_{n}) = \varphi(a_{1}\dots a_{m})$, by \thref{quotientstateslemma} it follows that $q_{M+t} \in Q_{1}$ and hence \linebreak$\sigma_{M+t+1} = b_{n'+t+1}$ for $0 \leq t < n-n'$. Thus $\sigma_{1}\dots\sigma_{M+n-n'} = (a_{1}\dots a_{m},b_{1}\dots b_{n})$, and thus $q_{M+n-n'} = (\epsc,\epsc)$ by \thref{quotientstateslemma}. But $M + n-n' = m+ n< |\alpha| + |\beta|$, which contradicts acceptance of $(\alpha,\beta)$ by $p$ (as there are no out-edges from $(\epsc,\epsc)$).

A similar proof shows if $k = N$, then $\sigma_{1}\dots\sigma_{N+m-m'} = (a_{1}\dots a_{m},b_{1}\dots b_{n})$, and $q_{N+m-m'} = (\epsc,\epsc)$. But $N +m-m' = m + n < |\alpha| + |\beta|$, which again contradicts acceptance of $(\alpha,\beta)$ by $p$. Thus is must be that $(\alpha,\beta)$ is indecomposable, and hence $\mathcal{L}(\mathcal{A_{\varphi,\psi}})$ consists of indecomposables.\\

To show the reverse inclusion, let $(\alpha,\beta) \in \fibpro$ be indecomposable. If $\alpha = \epsc$, then necessarily $\beta \in B$ and $\psi(\beta) = \epsc$, and the transition $(\iota, (\epsa,\beta),(\epsc,\epsc))$ accepts $(\alpha,\beta)$. Similarly, if $\beta = \epsc$, then $\alpha \in A$ with $\varphi(\alpha) = \epsc$, and the transition $(\iota,(\alpha,\epsb),(\epsc,\epsc))$ accepts $(\alpha,\beta)$. Otherwise, for $(\alpha,\beta) \in A^{+} \times B^{+}$, define the sequence of triples $(q_{i-1},\sigma_{i},q_{i})_{i=1}^{|\alpha| + |\beta|-1} \in Q \times \Sigma \times Q$ by $q_{0} = \iota$, $\sigma_{1} = (a_{1},b_{1})$, and
\begin{equation}\label{fullwordssequ}\sigma_{i} = \begin{cases}% (a_{1},b_{1}) & \text{ if } q_{i-1} = \iota \\
				    (a_{j_{i-1}},\epsb) & \text{ if } q_{i-1} \in Q_{2} \\
				    (\epsa, b_{k_{i-1}}) & \text{ if } q_{i-1} \in Q_{1} \end{cases}\end{equation}
(where $j_{i-1} = |\pi_{A^{*}}(\sigma_{1}\dots\sigma_{i-1})|+1$, $k_{i-1} =|\pi_{B^{*}}(\sigma_{1}\dots \sigma_{i-1})|+1$ for $2 \leq i \leq |\alpha|+|\beta|-1$), and
\[q_{i} = \begin{cases} 
(\Psi(\sigma_{1}\dots\sigma_{i})^{-1}\Phi(\sigma_{1}\dots\sigma_{i}),\epsc) & \text{ if }  \Psi(\sigma_{1}\dots\sigma_{i}) \leq_{p} \Phi(\sigma_{1}\dots\sigma_{i}) \\
(\epsc,\Phi(\sigma_{1}\dots\sigma_{i})^{-1}\Psi(\sigma_{1}\dots\sigma_{i})) & \text{ if } \Phi(\sigma_{1}\dots\sigma_{i}) \leq_{p} \Psi(\sigma_{1}\dots\sigma_{i})\end{cases}\]
for $1 \leq i \leq |\alpha| + |\beta|-1$. 

Note that both $q_{i}$ and $\sigma_{i}$ are always well defined, as if $\varphi(\alpha) = \psi(\beta)$, then either $\Phi(\sigma_{1}\dots\sigma_{i}) \leq_{p} \Psi (\sigma_{1}\dots\sigma_{i})$ or $\Psi(\sigma_{1}\dots\sigma_{i}) \leq_{p} \Phi (\sigma_{1}\dots\sigma_{i})$, as $\Phi(\sigma_{1}\dots\sigma_{i})$ and $\Psi(\sigma_{1}\dots\sigma_{i})$ are prefixes of $\varphi(\alpha)$ and $\psi(\beta)$ respectively. Moreover, $\Phi(\sigma_{1}\dots\sigma_{i}) \not =\Psi(\sigma_{1}\dots\sigma_{i})$ for \linebreak $1 \leq i \leq |\alpha| + |\beta| -1$ by indecomposability of $(\alpha,\beta)$, and hence $q_{i} \not = (\epsc,\epsc)$ for $0 \leq i \leq |\alpha| + |\beta|-1$.  By construction of $\sigma_{i}$, noting that if $\sigma_{i} \in A\times\{\epsb\}$, then
\[j_{i}  = |\pi_{A^{*}}(\sigma_{1}\dots\sigma_{i})| + 1  = ( |\pi_{A^{*}}(\sigma_{1}\dots\sigma_{i-1})| +1) + 1=  j_{i-1}+1,\]
and
\[k_{i}  = |\pi_{B^{*}}(\sigma_{1}\dots\sigma_{i})| + 1 =|\pi_{B^{*}}(\sigma_{1}\dots\sigma_{i-1})| + 1 =  k_{i-1},\]
whereas if $\sigma_{i} \in \{\epsa\}\times B$, then
\[j_{i}  = |\pi_{A^{*}}(\sigma_{1}\dots\sigma_{i})| + 1  = ( |\pi_{A^{*}}(\sigma_{1}\dots\sigma_{i-1})| + 1=  j_{i-1},\]
and
\[k_{i}  = |\pi_{B^{*}}(\sigma_{1}\dots\sigma_{i})| + 1 =(|\pi_{B^{*}}(\sigma_{1}\dots\sigma_{i-1})| +1) + 1 =  k_{i-1}+1.\]
As $j_{1} = k_{1} = 2$, then it follows that $\pi_{A^{*}}(\sigma_{1}\dots\sigma_{|\alpha|+|\beta|}) = a_{1}a_{2}\dots a_{m}$ and \linebreak $\pi_{B^{*}}(\sigma_{1}\dots\sigma_{|\alpha|+|\beta|}) = b_{1}b_{2}\dots b_{n}$ for some $m \leq |\alpha|$, $n \leq |\beta|$.  We conclude by making the following claims;

\textit{Claim 1: $(q_{i-1},\sigma_{i},q_{i}) \in \delta$ for $1 \leq i \leq |\alpha| + |\beta|$, hence $(q_{i-1},\sigma_{i},q_{i})_{i=1}^{|\alpha| + |\beta|}$ is a path in $\mathcal{A_{\varphi,\psi}}$}. \\
\textit{Claim 2: $\sigma_{1}\dots\sigma_{|\alpha|+|\beta|} = (\alpha,\beta)$}. 

Combining the above claims, and noting that $q_{0} = \iota$, $q_{|\alpha|+|\beta|} = (\epsc,\epsc)$ as $\varphi(\alpha) = \psi(\beta)$ and hence
\[\Psi(\sigma_{1}\dots\sigma_{|\alpha|+|\beta|})^{-1}\Phi(\sigma_{1}\dots\sigma_{|\alpha|+|\beta|}) = \Phi(\sigma_{1}\dots\sigma_{|\alpha|+|\beta|})^{-1}\Psi(\sigma_{1}\dots\sigma_{|\alpha|+|\beta|}) = \epsc,\]
then there exists a path in $\mathcal{A_{\varphi,\psi}}$ accepting $(\alpha,\beta)$, and hence $(\alpha,\beta) \in \mathcal{L}(\mathcal{A_{\varphi,\psi}})$, completing the proof of the theorem.

\textit{Proof of Claim 1.} For $i = 1$, $(q_{0},\sigma_{1},q_{1}) \in \delta$ by the definition, as \linebreak necessarily $\sigma_{1} = (a_{1},b_{1})$ and either $q_{1} = (\Psi(\sigma_{1})^{-1}\Phi(\sigma_{1}),\epsc) = (\psi(b_{1})^{-1}\varphi(a_{1}),\epsc)$, or $q_{1} = (\epsc,\Phi(\sigma_{1})^{-1}\Psi(\sigma_{1})) = (\epsc,\varphi(a_{1})^{-1}\psi(b_{1}))$. 

If $q_{i-1} \in Q_{1},$ then $q_{i-1} = (\Psi(\sigma_{1}\dots\sigma_{i-1})^{-1}\Phi(\sigma_{1}\dots\sigma_{i-1}),\epsc)$ and $\sigma_{i} = (\epsa,b_{k_{i}})$. As $\Psi(\sigma_{1}\dots \sigma_{i})$ and $\Phi(\sigma_{1}\dots \sigma_{i})$ are prefixes of $\psi(\beta) = \varphi(\alpha)$, then either \linebreak $\Psi(\sigma_{1}\dots \sigma_{i}) \leq_{p} \Phi (\sigma_{1}\dots\sigma_{i})$ or $\Phi (\sigma_{1}\dots\sigma_{i}) \leq_{p}\Psi(\sigma_{1}\dots \sigma_{i})$. \\

If $\Psi(\sigma_{1}\dots \sigma_{i}) \leq_{p} \Phi (\sigma_{1}\dots\sigma_{i})$, then in particular it follows that \linebreak$\psi(b_{k_{i}}) = \Psi(\sigma_{i}) \leq_{p} \Psi(\sigma_{1}\dots\sigma_{i-1})^{-1}\Phi(\sigma_{1}\dots\sigma_{i-1})$, and
\begin{align*}
q_{i} & = (\Psi(\sigma_{1}\dots\sigma_{i})^{-1}\Phi(\sigma_{1}\dots\sigma_{i}),\epsc)\\
         & = ([\Psi(\sigma_{1}\dots\sigma_{i-1})\Psi(\sigma_{i})]^{-1}\Phi(\sigma_{1}\dots\sigma_{i-1}),\epsc)\\
         & = (\Psi(\sigma_{i})^{-1}\Psi(\sigma_{1}\dots\sigma_{i-1})^{-1}\Phi(\sigma_{1}\dots\sigma_{i-1}),\epsc)\\
         & = (\psi(b_{k_{i}})^{-1}\Psi(\sigma_{1}\dots\sigma_{i-1})^{-1}\Phi(\sigma_{1}\dots\sigma_{i-1}),\epsc),
\end{align*}

thus $(q_{i-1},\sigma_{i},q_{i}) \in \Delta_{5} \subseteq \delta $ by definition. On the other hand, if \linebreak $\Phi(\sigma_{1}\dots\sigma_{i}) \leq_{p} \Psi(\sigma_{1}\dots\sigma_{i})$, then in particular it follows that \linebreak $ \Psi(\sigma_{1}\dots\sigma_{i-1})^{-1}\Phi(\sigma_{1}\dots\sigma_{i-1}) \leq_{p} \psi(b_{k_{i}})$, and
\begin{align*}
q_{i} & = (\epsc,\Phi(\sigma_{1}\dots\sigma_{i})^{-1}\Psi(\sigma_{1}\dots\sigma_{i}))\\
         & = (\epsc,\Phi(\sigma_{1}\dots\sigma_{i-1})^{-1}\Psi(\sigma_{1}\dots\sigma_{i-1})\Psi(\sigma_{i}))\\
         & = (\epsc,[\Psi(\sigma_{1}\dots\sigma_{i-1})^{-1}\Phi(\sigma_{1}\dots\sigma_{i-1})]^{-1}\Psi(\sigma_{i}))\\
         & = (\epsc,[\Psi(\sigma_{1}\dots\sigma_{i-1})^{-1}\Phi(\sigma_{1}\dots\sigma_{i-1})]^{-1}\psi(b_{k_{i}})),
\end{align*}

thus $(q_{i-1},\sigma_{i},q_{i}) \in \Delta_{6} \subseteq \delta $ by definition. \\

If $q_{i-1} \in Q_{2}$, then a similar proof shows that $(q_{i-1},\sigma_{i},q_{i}) \in \Delta_{6}\cup\Delta_{8} \subseteq \delta$, thus proving the claim.

\textit{Proof of Claim 2.} It suffices to prove that $m = |\alpha|$, and $n = |\beta|$. Let $S_{A},S_{B}$ be defined by\begin{align*}
S_{A} &:= \{ i \in \{2,\dots,|\alpha|+|\beta|\} : \sigma_{i} \in A \times \{\epsb\}\}\\
S_{B} &:= \{ i \in \{2,\dots,|\alpha|+|\beta|\} : \sigma_{i} \in \{\epsa\} \times B\}
\end{align*} 
Then $m = |S_{A}|+1$ and $n = |S_{B}|+1$. Suppose for a contradiction that $|S_{A}| > |\alpha|-1$. Ordering $S_{A}$ in the natural way, let $I \in S_{A}$ be the element at  position $|\alpha|$. Then \[|\pi_{A^{*}}(\sigma_{1}\dots\sigma_{I-1})| = |\pi_{A^{*}}(\sigma_{2}\dots\sigma_{I-1})| + 1 = |\pi_{A^{*}}(\sigma_{2}\dots\sigma_{I})| -1 + 1 = |\alpha|.\] Moreover, as $\pi_{A^{*}}(\sigma_{1}\dots\sigma_{I-1}) \leq_{p} a_{1}a_{2}\dots a_{m}$, it follows that $\pi_{A^{*}}(\sigma_{1}\dots\sigma_{I-1}) = a_{1}a_{2}\dots a_{|\alpha|}$. In particular, $\Phi(\sigma_{1}\dots\sigma_{I-1}) = \varphi(\alpha)$. But
\begin{align*} &\, \varphi(\alpha) = \psi(\beta) \\
\Rightarrow &\,\Psi(\sigma_{1}\dots\sigma_{I-1}) \leq_{p} \Phi(\sigma_{1}\dots\sigma_{I-1}) \\
\Rightarrow &\, q_{I-1} \in Q_{1}. \end{align*}
As $q_{I-1} \in Q_{1}$, then $\sigma_{I} \in \{\epsa\}\times B$, which is a contradiction, as then $I \not \in S_{A}$. Hence $|S_{A}| \leq |\alpha|-1$. A similar proof shows that $|S_{B}| \leq |\beta|-1$. Moreover, as \linebreak$|S_{A}| + |S_{B}| = |\alpha| -1 + |\beta|-1$, it follows that $|S_{A}| \not < |\alpha|-1$ (for otherwise $|S_{B}| > |\beta|-1$), and similarly $|S_{B}| \not < |\beta|-1$. Hence $|S_{A}| = |\alpha|-1$, $|S_{B}| = |\beta|-1$ which gives $m = |\alpha|$, $n = |\beta|$ as required.\end{proof}\end{lemma}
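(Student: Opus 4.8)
The plan is to establish the two inclusions $\mathcal{L}(\mathcal{A_{\varphi,\psi}}) \subseteq I$ and $I \subseteq \mathcal{L}(\mathcal{A_{\varphi,\psi}})$ separately, where $I$ denotes the set of indecomposable elements of $\fibpro$. The guiding idea is to read a state $(u,v) \in Q$ as recording the outstanding ``imbalance'' between the images under $\varphi$ and $\psi$ of the input consumed so far; this reading is made exact by \thref{quotientstateslemma}, which says that after a run $(q_{i-1},\sigma_{i},q_{i})_{i=1}^{k}$ from $\iota$ the state $q_{k}$ equals $(\Psi(\sigma_{1}\dots\sigma_{k})^{-1}\Phi(\sigma_{1}\dots\sigma_{k}),\epsc)$ or $(\epsc,\Phi(\sigma_{1}\dots\sigma_{k})^{-1}\Psi(\sigma_{1}\dots\sigma_{k}))$ according to which of $\Phi,\Psi$ is the shorter prefix. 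In particular $q_{k} = (\epsc,\epsc)$ precisely when the two images coincide, i.e. when the consumed prefix pair already lies in $\fibpro$.

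For the forward inclusion I would take $(\alpha,\beta) \in \mathcal{L}(\mathcal{A_{\varphi,\psi}})$, witnessed by a path from $\iota$ to the unique final state $(\epsc,\epsc)$. Applying \thref{pathlabelrelation} with terminal state $(\epsc,\epsc)$ gives $\varphi(\alpha) = \psi(\beta)$, so $(\alpha,\beta) \in \fibpro$. Suppose for contradiction that $(\alpha,\beta) = (\alpha',\beta')(\alpha'',\beta'')$ is a nontrivial decomposition in $\fibpro$; the degenerate possibilities $\alpha = \epsc$ or $\beta = \epsc$ force $(\alpha,\beta)$ to be one of the indecomposables accepted by $\Delta_{1}$ or $\Delta_{2}$, so we may assume $\alpha',\alpha'' \in A^{+}$ and $\beta',\beta'' \in B^{+}$. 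The first factor satisfies $\varphi(\alpha') = \psi(\beta')$. I would locate the first index at which one coordinate of the consumed prefix reaches $(\alpha',\beta')$, push the run forward a bounded number of steps using the shapes of $\Delta_{5}$--$\Delta_{8}$ until both coordinates equal $(\alpha',\beta')$, and invoke \thref{quotientstateslemma} to conclude that the state reached there is $(\epsc,\epsc)$. Since $(\epsc,\epsc)$ has no outgoing transitions in $\delta$ yet the run must still consume $(\alpha'',\beta'') \neq (\epsa,\epsb)$, this is a contradiction, so $(\alpha,\beta)$ is indecomposable.

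For the reverse inclusion, let $(\alpha,\beta) \in \fibpro$ be indecomposable. The cases $\alpha = \epsc$ and $\beta = \epsc$ are accepted directly by a single transition in $\Delta_{1}$ or $\Delta_{2}$. For $(\alpha,\beta) \in A^{+}\times B^{+}$, writing $\alpha = a_{1}\dots a_{|\alpha|}$, $\beta = b_{1}\dots b_{|\beta|}$, I would build the canonical run greedily: start with $\sigma_{1} = (a_{1},b_{1})$, and thereafter feed the next unread $A$-letter whenever the current state lies in $Q_{2}$ and the next unread $B$-letter whenever it lies in $Q_{1}$, defining each $q_{i}$ by the formula of \thref{quotientstateslemma}. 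Two observations make this well defined and accepting. First, since $\varphi(\alpha) = \psi(\beta)$, at every stage $\Phi$ and $\Psi$ of the consumed prefix are comparable prefixes of the common word, so exactly one branch of the state formula applies; and by indecomposability no proper prefix pair lies in $\fibpro$, so $q_{i} \neq (\epsc,\epsc)$ before the final step, keeping the run inside $Q_{1}\cup Q_{2}$ where a feeding rule is always available. Second, I must check (Claim~1) that each produced triple genuinely lies in $\delta$, which is a direct match against $\Delta_{5},\dots,\Delta_{8}$ once the correct branch of \thref{quotientstateslemma} is identified, and (Claim~2) that the run consumes \emph{all} of $\alpha$ and $\beta$, after which $\varphi(\alpha) = \psi(\beta)$ forces the terminal state to be $(\epsc,\epsc)$ and hence acceptance. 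Uniqueness of this accepting path, if wanted for tidiness, comes from \thref{onepathlemma}.

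The main obstacle is Claim~2: showing the greedy run exhausts both words rather than stalling on proper prefixes. I would argue this by counting. Let $S_{A}$ and $S_{B}$ be the sets of steps feeding an $A$-letter and a $B$-letter respectively; the construction gives the length identity $|S_{A}| + |S_{B}| = (|\alpha|-1) + (|\beta|-1)$. Were the run to feed an $A$-letter more than $|\alpha|-1$ times, consider the $|\alpha|$-th such step $I$; by then all of $\alpha$ has been read, so $\Phi(\sigma_{1}\dots\sigma_{I-1}) = \varphi(\alpha) = \psi(\beta)$, whence $\Psi(\sigma_{1}\dots\sigma_{I-1}) \leq_{p} \Phi(\sigma_{1}\dots\sigma_{I-1})$ and $q_{I-1} \in Q_{1}$ by \thref{quotientstateslemma}. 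But a state in $Q_{1}$ forces $\sigma_{I}$ to feed a $B$-letter, contradicting $I \in S_{A}$; symmetrically $|S_{B}| \leq |\beta|-1$. Combined with the length identity these two bounds force $|S_{A}| = |\alpha|-1$ and $|S_{B}| = |\beta|-1$, so every letter of $\alpha$ and $\beta$ is consumed and the run terminates at $(\epsc,\epsc)$, completing the inclusion and the proof.
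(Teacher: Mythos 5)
Your proposal is correct and follows essentially the same route as the paper's proof: the forward inclusion by pushing the accepting run to the boundary of the first factor of a putative decomposition and using \thref{quotientstateslemma} to force a premature arrival at $(\epsc,\epsc)$, and the reverse inclusion via the same greedy run with the identical counting argument on $S_{A}$ and $S_{B}$. The only cosmetic difference is that you cite \thref{pathlabelrelation} for $\varphi(\alpha)=\psi(\beta)$ where the paper cites \thref{onepathlemma}; your citation is in fact the apter one.
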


\begin{proof}[Proof of \thref{fgnocyclesaut}]\label{constructingcyclesproof}
$(\Rightarrow)$ For sufficiency, we prove the contrapositive. Suppose that $\mathcal{A_{\varphi,\psi}}$ has a cycle. Then there exists a sequence of transitions $(q_{i-1},\sigma_{i},q_{i})_{i=1}^{k} $ where $q_{0 } = q_{k}$. By the definition of $\delta$, it follows that $q_{0} \not = \iota$ and $q_{0} \not = (\epsc,\epsc)$. Thus either $q_{0} = (u,\epsc)$ where $u \in C^{+}$ is such that $u <_{s} w$ for some $w \in \varphi(A)$, or $q_{0} = (\epsc,v)$ where $v \in C^{+}$ is such that $v <_{s} w$ for some $w \in \psi(B)$.

It suffices to consider the case where $q_{0} = (u,\epsc)$, as the proof for the other case will follow by a symmetric argument. As $u$ is a suffix of $w = \varphi(a)$ for some $a \in A$, and $\psi$ is surjective, then there exist $b_{1},\dots,b_{j},b'_{1},\dots,b'_{l} \in B$ such that $\psi(b_{1}\dots b_{j})u = w$ and $\psi(b'_{1}\dots b'_{l}) = u$.

Construct the sequences of transitions $(p_{i-1},\tau_{i},p_{i})_{i=1}^{j}$ and $(r_{i-1},\rho_{i},r_{i})_{i=1}^{l}$  where 
\begin{enumerate} \item $(p_{0},\tau_{1},p_{1}) = (\iota, (a,b_{1}),([\psi(b_{1})]^{-1}w,\epsc))$, 
			\item $p_{i} = ([\psi(b_{1}...b_{i})]^{-1}w,\epsc)$, $\tau_{i} = (\epsa,b_{i})$ for $1 < i \leq j$ , 
			\item $r_{0} = q_{0}$, $\rho_{i} = (\epsa, b'_{i})$, $r_{i} = ([\psi(b'_{1}\dots b'_{i})]^{-1}u,\epsc)$ for $1 \leq i \leq l$.\end{enumerate}

Noting that $p_{j} = q_{0} = q_{k} = r_{0}$ and $r_{l} = (\epsc,\epsc)$, then for all $n \in \N$ it follows that the input $\tau_{1}\dots\tau_{j+1}(\sigma_{1}\dots\sigma_{k})^{n}\rho_{1}\dots\rho_{l}$ is accepted by $\mathcal{A_{\varphi,\psi}}$, via the concatenation of the sequences of transitions $(p_{i-1},\tau_{i},p_{i})_{i=1}^{j}$, $(q_{i-1},\sigma_{i},q_{i})_{i=1}^{k}$\ ($n$ times), and $(r_{i-1},\rho_{i},r_{i})_{i=1}^{l}$.

$(\Leftarrow)$ For necessity, suppose that $\mathcal{A_{\varphi,\psi}}$ has no cycles. Then as $|Q|, |\delta| < \infty$, it follows that there are only finitely many transitions in $\mathcal{A_{\varphi,\psi}}$, and hence $|\mathcal{L}(\mathcal{A_{\varphi,\psi}})| < \infty$. By \thref{indecomposableL(A)}, it follows that $\fibpro$ has finitely many indecomposable elements. Hence by \thref{FGiffIndecompFin}, $\fibpro$ is finitely generated as required.
\end{proof}

We also give an analogous result for fiber products of two finitely generated free semigroups over a finitely generated free semigroup fiber. Given epimorphisms $\varphi : A^{+} \rightarrow C^{+}$, \linebreak $\psi : B^{+} \rightarrow C^{+}$ (with $A,B,C$ finite alphabets), we can extend $\varphi$ and $\psi$ naturally to homomorphisms $\varphi' : A^{*} \rightarrow C^{*}$, $\psi' : B^{*} \rightarrow C^{*}$ by mapping $\epsa$ and $\epsb$ to $\epsc$. Then $\Pi(\varphi',\psi') = \fibpro \cup \{(\epsa,\epsb)\}$, and hence $\fibpro$ is finitely generated as a semigroup if and only if $\Pi(\varphi',\psi')$ is finitely generated as a monoid. Hence we obtain the following corollaries:

\begin{cor} \thlabel{indecomposableL(A)} Let $\varphi : A^{*} \rightarrow C^{*}$, $\psi : B^{*} \rightarrow C^{*}$ be two epimorphisms with $A,B,C$ finite alphabets. Then the language accepted by $\mathcal{A_{\varphi',\psi'}}$ is the set of indecomposable elements of $\fibpro$.\end{cor}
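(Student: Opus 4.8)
The plan is to deduce the corollary from the monoid-case result already established above, namely the lemma identifying $\mathcal{L}(\mathcal{A}_{\varphi,\psi})$ with the set of indecomposable elements of $\fibpro$ when $\varphi,\psi$ are monoid homomorphisms. Since the extensions $\varphi' : A^{*} \rightarrow C^{*}$ and $\psi' : B^{*} \rightarrow C^{*}$ are again \emph{surjective} monoid homomorphisms (indeed $\varphi'(A^{*}) = \varphi(A^{+}) \cup \{\epsc\} = C^{+} \cup \{\epsc\} = C^{*}$, and likewise for $\psi'$), that lemma applies verbatim to the pair $\varphi',\psi'$ and yields that $\mathcal{L}(\mathcal{A}_{\varphi',\psi'})$ is precisely the set of indecomposable elements of the monoid fiber product $\Pi(\varphi',\psi')$. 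The remaining task is then purely structural: to match the indecomposables of $\Pi(\varphi',\psi')$, viewed as a monoid, with the indecomposables of $\fibpro$, viewed as a semigroup.

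First I would record the decomposition $\Pi(\varphi',\psi') = \fibpro \cup \{(\epsa,\epsb)\}$ noted in the lead-in text, together with the fact that $(\epsa,\epsb)$ is the monoid identity. The point is that any pair $(u,v) \in \Pi(\varphi',\psi')$ with one empty coordinate forces both coordinates to be empty: if, say, $u = \epsa$, then $\psi'(v) = \varphi'(\epsa) = \epsc$, and since the original map $\psi$ sends every word of $B^{+}$ into $C^{+}$, the only $v \in B^{*}$ with $\psi'(v) = \epsc$ is $v = \epsb$. Consequently the non-identity elements of $\Pi(\varphi',\psi')$ are exactly the elements of $\fibpro$, each with both coordinates nonempty.

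Next I would check that, for $(u,v) \in \fibpro$, decomposability as a semigroup element and as a monoid element coincide. In one direction, any semigroup factorisation $(u,v) = (u_{1},v_{1})(u_{2},v_{2})$ with $(u_{i},v_{i}) \in \fibpro$ is a factorisation into non-identity elements of $\Pi(\varphi',\psi')$, so $(u,v)$ is monoid-decomposable. Conversely, any factorisation of $(u,v)$ into two non-identity elements of $\Pi(\varphi',\psi')$ must, by the previous paragraph, use factors lying in $\fibpro$, hence is a semigroup factorisation. Therefore an element of $\fibpro$ is indecomposable as a semigroup element if and only if it is indecomposable as a monoid element, and the two sets of indecomposables agree.

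The only point requiring care is the identity $(\epsa,\epsb)$, which belongs to $\Pi(\varphi',\psi')$ but not to $\fibpro$; one must confirm it does not leak into the accepted language. It does not: the empty input corresponds to the empty path, which ends at the non-final state $\iota \neq (\epsc,\epsc)$, so $(\epsa,\epsb) \notin \mathcal{L}(\mathcal{A}_{\varphi',\psi'})$, consistently with the monoid lemma producing no identity element among its indecomposables. Chaining the three steps then gives $\mathcal{L}(\mathcal{A}_{\varphi',\psi'}) = \{\text{indecomposables of } \Pi(\varphi',\psi')\} = \{\text{indecomposables of } \fibpro\}$, as required. I expect the heavy lifting to be done entirely by the monoid lemma; the main (and mild) obstacle is just the bookkeeping around the identity element and the verification that empty coordinates propagate, which is where surjectivity of $\psi$ into $C^{+}$ is used.
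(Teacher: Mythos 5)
Your proposal is correct and takes essentially the same route as the paper: the paper offers no separate proof of this corollary, deriving it immediately from the monoid lemma together with the lead-in observation that $\Pi(\varphi',\psi') = \fibpro \cup \{(\epsa,\epsb)\}$, which is exactly your argument. You simply make explicit the details the paper leaves implicit (surjectivity of $\varphi',\psi'$, the equivalence of semigroup- and monoid-indecomposability for elements of $\fibpro$, and the fact that the identity $(\epsa,\epsb)$ is not accepted since no transition of $\mathcal{A}_{\varphi',\psi'}$ has label $(\epsa,\epsb)$ and $\iota$ is not a final state).
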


\begin{cor} \thlabel{A'nocyclesfg}
Let $\varphi : A^{+} \rightarrow C^{+}$, $\psi : B^{+} \rightarrow C^{+}$ be two epimorphisms with $A,B,C$ finite alphabets. Then the fiber product of $A^{+}$ with $B^{+}$ over $C^{+}$ with respect to $\varphi,\psi$ is finitely generated if and only if $\mathcal{A_{\varphi',\psi'}}$ has no cycles.\end{cor}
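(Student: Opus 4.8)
The plan is to reduce this statement directly to \thref{fgnocyclesaut} via the semigroup-to-monoid passage described immediately before the corollary. First I would take the epimorphisms $\varphi : A^{+} \rightarrow C^{+}$, $\psi : B^{+} \rightarrow C^{+}$ and form their extensions $\varphi' : A^{*} \rightarrow C^{*}$, $\psi' : B^{*} \rightarrow C^{*}$ sending $\epsa, \epsb$ to $\epsc$. I would check that these really are \emph{monoid} epimorphisms, which is exactly what \thref{fgnocyclesaut} requires: they are well-defined homomorphisms since an empty factor contributes the identity $\epsc$ to both sides, and they are surjective because $\varphi(A^{+}) = C^{+}$ (resp. $\psi(B^{+}) = C^{+}$) together with $\varphi'(\epsa) = \epsc$ forces the image to be exactly $C^{*}$. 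This legitimises speaking of the automaton $\mathcal{A}_{\varphi',\psi'}$ and applying the monoid theorem to the pair $\varphi', \psi'$.

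Next I would record the two equivalences that make the corollary fall out. On the one hand, as noted before the statement, $\Pi(\varphi',\psi') = \fibpro \cup \{(\epsa,\epsb)\}$, where the extra pair is the identity of $A^{*} \times B^{*}$; thus $\Pi(\varphi',\psi')$ is the monoid obtained from the semigroup $\fibpro$ by adjoining an identity, so $\fibpro$ is finitely generated as a semigroup if and only if $\Pi(\varphi',\psi')$ is finitely generated as a monoid (a finite semigroup generating set for $\fibpro$ is a monoid generating set for $\Pi(\varphi',\psi')$, and conversely deleting the identity from a finite monoid generating set leaves a finite semigroup generating set). On the other hand, \thref{fgnocyclesaut} applied to $\varphi', \psi'$ gives that $\Pi(\varphi',\psi')$ is finitely generated if and only if $\mathcal{A}_{\varphi',\psi'}$ has no cycles. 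Chaining these two equivalences yields precisely the claim.

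There is essentially no genuine obstacle here, since all the substantive content — the automaton construction, the identification of the accepted language $\mathcal{L}(\mathcal{A}_{\varphi',\psi'})$ with the set of indecomposable elements of the fiber product, and the cycle analysis of \thref{fgnocyclesaut} together with \thref{FGiffIndecompFin} — has already been carried out in the monoid case. The only point meriting explicit care, which I would verify, is that adjoining the identity $(\epsa,\epsb)$ alters neither the set of indecomposable elements nor the finite-generation status of the product: $(\epsa,\epsb)$ is the unique element of $\Pi(\varphi',\psi') \setminus \fibpro$, and an identity never participates in a nontrivial decomposition, so the indecomposable counts on the semigroup and monoid sides coincide. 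With this observation the corollary is immediate.
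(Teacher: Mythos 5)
Your proposal is correct and follows essentially the same route as the paper: extend $\varphi,\psi$ to monoid epimorphisms $\varphi',\psi'$, observe that $\Pi(\varphi',\psi') = \fibpro \cup \{(\epsa,\epsb)\}$ so that finite generation of the semigroup and monoid fiber products coincide, and then invoke \thref{fgnocyclesaut}. The extra verifications you flag (surjectivity of $\varphi'$ onto $C^{*}$, and that adjoining the identity changes neither finite generation nor the indecomposables) are exactly the points the paper leaves implicit, so your write-up is if anything slightly more complete.
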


\section{Some remarks on numbers of subdirect products}\label{nonfibsec}

Though the results above appear to indicate that finitely generated fiber products of free semigroups are sparse, we know that finitely generated subdirect products of finitely generated free semigroups are easy to come by. For example, let $A,B$ be finite alphabets. Then choosing $X \subseteq A\times B$ such that the natural projection maps $\pi_{A} : X \rightarrow A$ and $\pi_{B} : X \rightarrow B$ are surjections yields subdirect products $\langle X \rangle$ of $A^{+}$ and $B^{+}$. It is then possible to count all such $X$, as in the next result.

\begin{propn}\thlabel{numberoffgsubs}
Let $A,B$ be finite sets, and let
\[\textup{Subdirect}(A,B) := \{X \subseteq A \times B : \langle X \rangle \leq_{\textup{sd}} A^{+} \times B^{+}\}.\]
Then
\begin{equation}\label{subdfinnum}|\textup{Subdirect}(A,B)| = \sum_{i=0}^{|A|} (-1)^{i} {|A|\choose i}(2^{|A|-i}-1)^{|B|}.\end{equation}
Moreover, 
\[\displaystyle \lim_{|A| \rightarrow \infty} \frac{|\textup{Subdirect}(A,A)|}{|\mathcal{P}(A\times A)|} = 1.\]
\begin{proof}
Without loss of generality, as $A$ and $B$ are finite we can relabel $A$ and $B$ so that $A =  \{a_{1},\dots,a_{m}\}$ and $B = \{b_{1},\dots,b_{n}\}$ for some $m,n \in \N$. We can associate any $X \in \text{Subdirect}(A,B)$ to the binary $m$ by $n$ matrix $M_{X}$ defined by
\[(M_{X})_{i,j} = \begin{cases}1 & \text{if } (a_{i},b_{j}) \in X\\ 0 & \text{otherwise}\end{cases}.\]
In particular, as $\langle X \rangle$ is a subdirect product of $A^{+}$ and $B^{+}$, then every $a_{i} \in A$ is paired with at least one $b_{j} \in B$, and vice versa. Hence $M_{X}$ has no zero rows or columns. Conversely, we can identify every binary $m$ by $n$ matrix $M$ with no zero rows or columns to a subset $X_{M} \in \text{Subdirect}(A,B)$, where \[X_{M} := \{(a_{i},b_{j}) \in A \times B : (M)_{i,j} = 1\}. \]
Hence $ |\textup{Subdirect}(A,B)|$ is equal to the number of binary $m$ by $n$ matrices with no zero rows or columns. Thus \eqref{subdfinnum} follows by a standard inclusion exclusion argument.

Moreover, for the limit, as $\textup{Subdirect}(A,A) \subseteq \mathcal{P}(A\times A)$, then
\begin{equation}\label{upperboundsum}\frac{|\textup{Subdirect}(A,A)|}{|\mathcal{P}(A\times A)|} \leq 1.\end{equation}
On the other hand, as
\begin{equation}\label{lowerboundsum} |\textup{Subdirect}(A,A)|= (2^{|A|}-1)^{|A|} - |A|(2^{|A|-1}-1)^{|A|} + \sum_{i=2}^{|A|} (-1)^{i}{|A|\choose i}(2^{|A|-i}-1)^{|A|},\end{equation}
then by verifying that the summand values $x_{i} := {|A|\choose i}(2^{|A|-i}-1)^{|A|}$ form a strictly decreasing sequence $(x_{i})_{i=2}^{|A|}$, we see that
\[\sum_{i=2}^{|A|} (-1)^{i}{|A|\choose i}(2^{|A|-i}-1)^{|A|} \geq 0,\]
implying from \eqref{lowerboundsum} that
\[ |\textup{Subdirect}(A,A)| \geq (2^{|A|}-1)^{|A|} - |A|(2^{|A|-1}-1)^{|A|}.\]
Thus as
\begin{equation*}\left( 1- \frac{1}{2^{|A|}}\right)^{|A|} - \frac{|A|}{2^{|A|}} = \frac{(2^{|A|}-1)^{|A|} - |A|(2^{|A|-1})^{|A|}}{2^{|A|^{2}}} \leq \frac{ (2^{|A|}-1)^{|A|} - |A|(2^{|A|-1}-1)^{|A|}}{2^{|A|^{2}}},\end{equation*}
then 
\begin{equation*}\left( 1- \frac{1}{2^{|A|}}\right)^{|A|} - \frac{|A|}{2^{|A|}} \leq \frac{|\textup{Subdirect}(A,A)|}{|\mathcal{P}(A\times A)|} \leq 1,\end{equation*}
from which the limit follows from standard analytic arguments.
\end{proof}
\end{propn}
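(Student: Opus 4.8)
The plan is to reduce the enumeration to a purely combinatorial count of binary matrices, apply inclusion–exclusion to obtain the formula, and then extract the limit by a squeeze argument.

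First I would establish the characterisation of the subsets $X$ that occur. Since $A^{+}$ is free on $A$, its indecomposable elements are exactly the letters, so every generating set must contain $A$; consequently a subset of $A$ generates $A^{+}$ if and only if it equals $A$. Applying this to $\pi_{A}(X) \subseteq A$ and $\pi_{B}(X) \subseteq B$ shows that $X \in \textup{Subdirect}(A,B)$ if and only if $\pi_{A}(X) = A$ and $\pi_{B}(X) = B$. Encoding $X$ as the $|A| \times |B|$ binary matrix $M_{X}$ with $(M_{X})_{i,j} = 1 \Leftrightarrow (a_{i},b_{j}) \in X$, these two surjectivity conditions say exactly that $M_{X}$ has no zero row and no zero column, which gives a bijection between $\textup{Subdirect}(A,B)$ and the set of such matrices.

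Next I would count these matrices by inclusion–exclusion on the set of vanishing rows. The number of $|A| \times |B|$ binary matrices with no zero column is $(2^{|A|}-1)^{|B|}$, since each column is an independently chosen nonzero vector in $\{0,1\}^{|A|}$; forcing a fixed set of $i$ rows to vanish restricts each column to a nonzero vector supported on the remaining $|A|-i$ rows, giving $(2^{|A|-i}-1)^{|B|}$. Summing with signs over the $\binom{|A|}{i}$ choices of vanishing rows yields the stated formula. This step is routine once the bijection is in place.

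For the limit I would squeeze the ratio between $1$ and the partial sum obtained by truncating the alternating series after its second term. The bound $\leq 1$ is immediate from $\textup{Subdirect}(A,A) \subseteq \mathcal{P}(A\times A)$ and $|\mathcal{P}(A\times A)| = 2^{|A|^{2}}$. For the lower bound, writing $x_{i} = \binom{|A|}{i}(2^{|A|-i}-1)^{|A|}$, the tail $\sum_{i \geq 2}(-1)^{i}x_{i}$ is nonnegative provided $(x_{i})_{i \geq 2}$ is decreasing, whence $|\textup{Subdirect}(A,A)| \geq x_{0}-x_{1}$. Dividing by $2^{|A|^{2}}$ leaves
\[
\left(1 - \frac{1}{2^{|A|}}\right)^{|A|} - |A|\left(\frac{2^{|A|-1}-1}{2^{|A|}}\right)^{|A|},
\]
whose first term tends to $1$ because $|A|/2^{|A|} \to 0$, and whose second term is bounded above by $|A|\cdot 2^{-|A|} \to 0$ since $\frac{2^{|A|-1}-1}{2^{|A|}} < \frac{1}{2}$. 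The squeeze theorem then forces the limit to be $1$. The main obstacle I anticipate is the monotonicity of $(x_{i})_{i \geq 2}$: the binomial factor grows while the power factor decays, so one must confirm the decay dominates. Comparing the consecutive ratio gives $x_{i+1}/x_{i} = \tfrac{|A|-i}{i+1}\bigl(\tfrac{2^{|A|-i-1}-1}{2^{|A|-i}-1}\bigr)^{|A|}$, and bounding the bracket by $1/2$ and the binomial growth by $|A|$ shows this ratio is at most $|A|\,2^{-|A|} < 1$, after which the whole argument reduces to standard asymptotic estimates.
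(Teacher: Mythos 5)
Your proposal is correct and follows essentially the same route as the paper: the bijection with binary matrices having no zero rows or columns, inclusion--exclusion over vanishing rows, and a squeeze argument using the truncation of the alternating series after its second term. In fact you fill in two details the paper leaves implicit, namely the free-semigroup justification that the projections of $X$ must be all of $A$ and $B$, and the explicit ratio computation $x_{i+1}/x_{i} \leq |A|\,2^{-|A|} < 1$ establishing the monotonicity needed for the tail bound.
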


\thref{numberoffgsubs} suggests that finitely generated subdirect products are numerous within the class of subsemigroups of $A^{+} \times B^{+}$ generated by subsets of $A \times B$, as $|A|$ grows with $|B|$. It is natural to ask how many of those finitely generated subdirect products are fiber products (using \thref{FleischerLemma}), and what proportion of all such subdirect products they constitute. This is answered in the following result.

\begin{propn}\thlabel{numberoffgfibs}
Let $A,B$ be finite sets, and let\[\textup{Subdirect}(A,B) := \{X \subseteq A \times B : \langle X \rangle \leq_{\textup{sd}} A^{+} \times B^{+}\},\]
\[\textup{Fiber}(A,B) := \{X \subseteq \textup{Subdirect}(A,B) : \textup{ker}\,\pi_{A^{+}} \circ \textup{ker}\,\pi_{B^{+}} = \textup{ker}\,\pi_{B^{+}} \circ \textup{ker}\,\pi_{A^{+}} \}.\]

Then
\begin{equation}\label{fibfinnum}|\textup{Fiber}(A,B)| = \sum_{i=1}^{\textup{min}\{|A|,|B|\}}i! \, S(|A|,i)S(|B|,i),\end{equation}
% \left\{\begin{matrix*}|A| \\ i\end{matrix*}\right\} \left\{\begin{matrix*}|B| \\ i\end{matrix*}\right\}.
where $S(n,k)$ is the Stirling number of the second kind.

Moreover, 
\[\displaystyle \lim_{|A| \rightarrow \infty} \frac{|\textup{Fiber}(A,A)|}{|\textup{Subdirect}(A,A)|} = 0.\]
\begin{proof}
We claim that $|\textup{Fiber}(A,B)|$ is equal to the number of binary $m \times n$ matrices with no zero rows, zero columns, or submatrices of the form 
\begin{equation}\label{bannedmats} \left( \begin{matrix*}0 & 1\\ 1 & 1 \end{matrix*}\right), \left( \begin{matrix*}1 & 0\\ 1 & 1 \end{matrix*}\right), \left( \begin{matrix*}1 & 1\\ 0 & 1 \end{matrix*}\right), \left( \begin{matrix*}1 & 1\\ 1 & 0 \end{matrix*}\right). \end{equation} 

To prove this, without loss of generality, let $A = \{a_{1}, \dots, a_{m}\}$, and $B = \{b_{1}, \dots, b_{n}\}$. We can associate an $m \times n$ binary matrix  $M_{X}$ to each $X \in \textup{Fiber}(A,B)$ as in \thref{numberoffgsubs}. In particular, $M_{X}$ has no zero rows or columns.

Any two non-zero entries in the same row of $M_{X}$ correspond to two generating pairs $(a_{i},b_{j}), (a_{i'},b_{j'}) \in \langle X \rangle$ with $i = i'$, and hence correspond to generating pairs which are related by the congruence $\textup{ker}\,\pi_{A^{+}}$. Similarly, any two non-zero entries in the same column of $M_{X}$ correspond to two generating pairs which are related by the congruence $\textup{ker}\,\pi_{B^{+}}$. Hence there are no submatrices of $M_{X}$ of the type given in \eqref{bannedmats} as these correspond to pairs $(a_{i},b_{j}),(a_{i'},b_{j'}) \in X$ which are related by $\text{ker}\,\pi_{A^{+}} \circ \text{ker}\,\pi_{B^{+}}$ but not by \linebreak $\textup{ker}\,\pi_{B^{+}} \circ \textup{ker}\,\pi_{A^{+}}$, or vice versa.

Conversely, let $((u,v),(u',v')) \in \langle X \rangle \times \langle X \rangle $. As \[((u,v),(u',v')) \in \text{ker}\,\pi_{A^{+}} \circ \text{ker}\,\pi_{B^{+}} \Leftrightarrow ((u_{i},v_{i}),(u'_{i},v'_{i})) \in \text{ker}\,\pi_{A^{+}} \circ \text{ker}\,\pi_{B^{+}}\]
for $1 \leq i \leq |u|$, and similarly
\[((u,v),(u',v')) \in \text{ker}\,\pi_{B^{+}} \circ \text{ker}\,\pi_{A^{+}} \Leftrightarrow ((u_{i},v_{i}),(u'_{i},v'_{i})) \in \text{ker}\,\pi_{B^{+}} \circ \text{ker}\,\pi_{A^{+}}\]

for $1 \leq i \leq |u|$, then the congruences $\textup{ker}\,\pi_{A^{+}}$ and $\textup{ker}\,\pi_{B^{+}}$ on $\langle X \rangle$ are completely determined by their restrictions to $X$, and hence so are $\text{ker}\,\pi_{A^{+}} \circ \text{ker}\,\pi_{B^{+}}$ and \linebreak $\text{ker}\,\pi_{B^{+}} \circ \text{ker}\,\pi_{A^{+}}$ . Hence every binary matrix without $2 \times 2$ submatrices of the type given in \eqref{bannedmats} corresponds to a fiber product of $A^{+}$ with $B^{+}$. This proves the claim.

The number of $m \times n$ binary matrices allowing for zero rows and columns without submatrices of the above form has been given in \cite[Theorem 3.1]{sterlingcount} as
\begin{equation*}\sum_{i=0}^{\textup{min}\{m,n\}}i! \, S(m+1,i+1)S(n+1,i+1).\end{equation*}
via transforming each matrix into a block diagonal binary matrix, and associating this matrix with two set partitions $\mu$ and $\nu$ of $\{1,\dots m+1\}$ and $\{1,\dots,n+1\}$ into $i+1$ blocks for some $i \in \N$, and a permutation on $\{1,\dots,i\}$. Noting that matrices with no zero rows or columns that avoid the set of submatrices given in \eqref{bannedmats} can be transformed into block diagonal matrices without any zero blocks on the diagonal, and accounting for this in the proof of \cite[Theorem 3.1]{sterlingcount} gives the result in \eqref{fibfinnum}.

For the limit, as $S(n,k)$ is the number of ways to partition a set of size $n$ into $k$ non-empty blocks, which is less than the number of ways to assign a set of size $n$ objects to $k$ unlabelled bins (allowing for empty bins), then we get the following upper bound on the number of fiber products.

\begin{eqnarray*}
|\textup{Fiber}(A,A)| = \sum_{i=1}^{|A|} i! \, S(|A|,i)^{2} \leq \sum_{i=1}^{|A|} i! \, \left( \frac{i^{|A|}}{i!}\right)^{2} \leq \sum_{i=1}^{|A|} i^{2|A|} \leq |A|.|A|^{2|A|} = |A|^{2|A|+1}.
\end{eqnarray*}

Hence using \thref{numberoffgsubs}, we have

\[\lim_{|A|\rightarrow \infty}  \frac{|\textup{Fiber}(A,A)|}{|\textup{Subdirect}(A,A)|} = \lim_{|A|\rightarrow \infty}  \frac{|\textup{Fiber}(A,A)|}{2^{|A|^{2}}} \leq  \lim_{|A|\rightarrow \infty} \frac{|A|^{2|A|+1}}{2^{|A|^{2}}}  \]
which tends to zero by standard analytic arguments. 
\end{proof}
\end{propn}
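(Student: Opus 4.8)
The plan is to recast the commuting-congruence condition defining $\textup{Fiber}(A,B)$ (which, by \thref{FleischerLemma}, is exactly the condition that the subdirect product $\langle X\rangle$ be a fiber product) as a combinatorial statement about the binary matrix $M_X$ attached to $X$ exactly as in \thref{numberoffgsubs}: with $A=\{a_1,\dots,a_m\}$, $B=\{b_1,\dots,b_n\}$ and $(M_X)_{i,j}=1 \iff (a_i,b_j)\in X$. The crucial enabling observation is that every generator in $X\subseteq A\times B$ has length one in each coordinate, so an element $(u,v)$ lies in $\langle X\rangle$ precisely when $|u|=|v|$ and $(u_\ell,v_\ell)\in X$ for every position $\ell$; membership is \emph{positionwise}. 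Using this I would compute the two composite congruences on $\langle X\rangle$ explicitly: $((u,v),(u',v'))\in\textup{ker}\,\pi_{A^+}\circ\textup{ker}\,\pi_{B^+}$ iff $(u,v')\in\langle X\rangle$, and $((u,v),(u',v'))\in\textup{ker}\,\pi_{B^+}\circ\textup{ker}\,\pi_{A^+}$ iff $(u',v)\in\langle X\rangle$. The positionwise description then collapses equality of these relations on all of $\langle X\rangle$ down to its restriction to length-one elements, i.e.\ to $X$ itself: the congruences commute iff for all $(a_i,b_j),(a_{i'},b_{j'})\in X$ one has $(a_i,b_{j'})\in X \iff (a_{i'},b_j)\in X$. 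This is exactly the requirement that $M_X$ contain none of the four $2\times2$ submatrices listed in \eqref{bannedmats} (equivalently, no $2\times2$ submatrix with precisely three ones). The main obstacle is this very reduction, since the congruences live on the infinite semigroup $\langle X\rangle$, and the positionwise membership fact is what guarantees nothing is lost by restricting to generators.

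Next I would give the structure of the surviving matrices and count them directly. Because $M_X$ has no zero row or column (subdirectness) and avoids \eqref{bannedmats}, any two rows that share a common $1$-column must have equal support; hence the rows fall into classes of identical support, distinct classes have disjoint supports, and — no column being zero — those supports partition the columns. Thus $M_X$ is, up to permutation of rows and columns, block diagonal with all-ones blocks, and is recovered uniquely from: an integer $i\ge 1$, a partition of the $m$ rows into $i$ nonempty blocks, a partition of the $n$ columns into $i$ nonempty blocks, and a bijection pairing the row-blocks with the column-blocks. This data is counted by $\sum_{i=1}^{\min\{m,n\}} i!\,S(m,i)S(n,i)$, giving \eqref{fibfinnum}. (Alternatively, as the surrounding text indicates, one may quote the count of all such matrices allowing zero lines from \cite[Theorem 3.1]{sterlingcount} and remove the zero-line contributions.)

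Finally, for the limit I would bound the summands crudely. Since $k!\,S(n,k)$ counts surjections $[n]\to[k]$ and there are at most $k^n$ functions, $S(n,k)\le k^n/k!$, whence each term satisfies $i!\,S(m,i)^2\le i^{2m}$. With $m=|A|$ this gives
\[
|\textup{Fiber}(A,A)| \;=\; \sum_{i=1}^{m} i!\,S(m,i)^2 \;\le\; \sum_{i=1}^{m} i^{2m} \;\le\; m\cdot m^{2m} \;=\; m^{2m+1}.
\]
By \thref{numberoffgsubs}, $|\textup{Subdirect}(A,A)|$ is asymptotic to $|\mathcal{P}(A\times A)|=2^{m^2}$, so
\[
\frac{|\textup{Fiber}(A,A)|}{|\textup{Subdirect}(A,A)|} \;\le\; \frac{m^{2m+1}}{2^{m^2}\,(1+o(1))},
\]
and since $m^{2m+1}=2^{(2m+1)\log_2 m}$ with $(2m+1)\log_2 m=o(m^2)$, the ratio tends to $0$. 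I expect this asymptotic step to be routine; the genuine work lies in the congruence-to-combinatorics reduction of the first paragraph.
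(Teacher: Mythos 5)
Your proposal is correct, and it follows the paper's overall skeleton — encode $X$ as the binary matrix $M_X$, show that commutativity of the two kernel congruences is equivalent to $M_X$ avoiding the four three-one $2\times 2$ submatrices, count the surviving matrices by Stirling numbers, and finish the limit with the bound $S(n,k)\le k^n/k!$ against $|\mathcal{P}(A\times A)|=2^{|A|^2}$ via \thref{numberoffgsubs}. Where you genuinely diverge is in the two middle steps, and in both cases your route is more self-contained. First, you make the congruence-to-combinatorics reduction explicit: you identify $\textup{ker}\,\pi_{A^{+}}\circ\textup{ker}\,\pi_{B^{+}}$ with the condition $(u,v')\in\langle X\rangle$ (and the reverse composite with $(u',v)\in\langle X\rangle$), and use the positionwise membership criterion for $\langle X\rangle$ to collapse equality of these relations to their restriction to $X$; the paper asserts essentially the same restriction property but states it tersely (and with the lengths $|u|=|u'|$ left implicit), so your version fills a real gap in rigour. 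Second, for the enumeration the paper quotes \cite[Theorem 3.1]{sterlingcount} for the count allowing zero rows and columns and then adjusts it by "accounting for this" in that proof, whereas you derive the count directly: avoidance of the banned submatrices forces rows sharing a $1$-column to have equal support, distinct supports to be disjoint, and (by subdirectness) the supports to partition the columns, so $M_X$ is block diagonal with all-ones blocks up to permutation, giving $\sum_{i=1}^{\min\{m,n\}} i!\,S(m,i)S(n,i)$ immediately. Your structural argument buys independence from the external enumeration result and makes the formula transparent; the paper's citation route is shorter but leaves the adjustment from $S(m+1,i+1)S(n+1,i+1)$ to $S(m,i)S(n,i)$ to the reader.
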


\newpage
\section{Further questions}\label{quesec}
Most of the results in this paper indicate that fiber products of free semigroups are rarely finitely generated, though finitely generated subdirect products of free semigroups are abundant. The quotient can be defined for these non-fiber products, but it does not determine the semigroup in the same way as the fiber product. Moreover, the results limit the possible presentations for fiber quotients of finitely generated fiber products of free semigroups. These observations motivate the following open questions:

\begin{que}
Does there exist a finitely generated subdirect product of two free monoids with a finite quotient, which is not finitely presented?
\end{que}

\begin{que}
Given a subdirect product $S$ of two free semigroups by a finite set of generating pairs, is it decidable whether or not $S$ is a fiber product?
\end{que}

\begin{que}
Does \thref{fingenifffinpres} generalise to infinite fiber quotients? That is, is a finitely generated fiber product of two free semigroups with an infinite fiber quotient also finitely presented?
\end{que}

\section{Acknowledgements}
The author would like to particularly thank Professor Nik Ru\v{s}kuc for his ever supportive guidance during the writing of this paper. For my aunt, Jacky.

\end{document}